\DeclareFontFamily{U}{mathx}{\hyphenchar\font45}
\DeclareFontShape{U}{mathx}{m}{n}{
      <5> <6> <7> <8> <9> <10>
      <10.95> <12> <14.4> <17.28> <20.74> <24.88>
      mathx10
      }{}
\DeclareSymbolFont{mathx}{U}{mathx}{m}{n}
\DeclareMathAccent{\widecheck}{0}{mathx}{"71}
\DeclareMathAccent{\wideparen}{0}{mathx}{"75}
\numberwithin{equation}{subsection}
\numberwithin{equation}{section}
\theoremstyle{plain}
\newtheorem{theorem}[equation]{Theorem}
\newtheorem{proposition}[equation]{Proposition}
\newtheorem{lemma}[equation]{Lemma}
\newtheorem{corollary}[equation]{Corollary}
\theoremstyle{definition}
\newtheorem{definition}[equation]{Definition}
\newtheorem{example}[equation]{Example}
\theoremstyle{remark}
\newtheorem{remark}[equation]{Remark}
\newcommand{\Coker}{\operatorname{Coker}}
\newcommand{\Ext}{\operatorname{Ext}}
\newcommand{\Hom}{\operatorname{Hom}}
\newcommand{\Ker}{\operatorname{Ker}}
\newcommand{\R}{\mathrm{R}}
\newcommand{\rmL}{\mathrm{L}}
\newcommand{\ul}{\underline}
\newcommand{\xrto}{\xrightarrow}
\def\A{\mathcal A}
\def\C{\mathcal C}
\def\E{\mathcal E}
\def\F{\mathcal F}
\def\I{\mathcal I}
\def\T{\mathcal T}
\def\H{\mathcal H}
\def\X{\mathcal X}
\def\Y{\mathcal Y}
\begin{document}

\title[\tiny{The triangulation of subfactors}]{The triangulation of the subfactor categories of additive categories with suspensions}
\author [\tiny{Zhi-Wei Li}] {Zhi-Wei Li}

\date{\today}
\thanks{The author was supported by National Natural Science Foundation
of China (No.s 11671174 and 11571329).}

\email{zhiweili@jsnu.edu.cn}
\subjclass[2010]{18E05, 18E10, 18E30}
\keywords{additive categories; subfactor categories; triangulated categories; cotorsion pairs; model structures}
\maketitle


\begin{center}
\tiny{School of Mathematics and Statistics, \ \ Jiangsu Normal University \\
Xuzhou 221116, Jiangsu, PR China.}
\end{center}
\begin{abstract}
We provide a framework to triangulate subfactor categories of additive categories with additive endofunctors. It is proved that such a framework is sufficiently flexible to cover many instances in algebra and geometry where abelian, exact and triangulated subfactor categories are constructed. As an application, we show that Iyama-Yoshino triangulated subfactor categories can be modeled.
\end{abstract}

\setcounter{tocdepth}{1}

\section{Introduction}
Ever since the triangulated categories were introduced by Verdier \cite{Verdier} in 1963, and the exact categories were introduced by Quillen \cite{Quillen73} in 1973 as a generalization of abelian categories defined by Buchsbaum \cite{Buchsbaum} and Grothendieck \cite{Grothendieck} in the late 1950's,  they have been two powerful tools in algebra and geometry. One of the most important bridges between exact categories and triangulated categories is the subfactor or stable categories. It is well known that the stable category of a Frobenius exact category modulo the projective-injectives are triangulated categories \cite{Happel88}. Meanwhile, with the development of the theory of cluster algebras and cluster categories, many examples in the opposite direction have been constructed \cite{Buan-Marsh-Reiten, Keller/Reiten, Ringel-Zhang, Koenig/Zhu}. Quite recently, the work of Iyama and Yoshino \cite{Iyama-Yoshino} and Kussin, Lenzing and Meltzer \cite{Kussin/Lenzing/Meltz}  show that triangulated and exact structures are even closed under taking subfactor categories in some cases.

This paper is aimed to give a framework to triangulate the subfactor categories of additive categories with suspensions in the sense of \cite{Heller68} which include both exact categories and triangulated categories. It is based on the notions of {\it partial one-sided triangulated categories} which unify and cover the instances mentioned above where abelian, exact and triangulated subfactor categories are constructed. 

We now give some details about our results. Roughly speaking, a partial right triangulated category consists of an additive category $\A$ endowed with an additive endofunctor $\Sigma$, two additive subcategories $\X, \C$ of $\A$ together with a class $\R(\C)$ of $\Sigma$-sequences, which satisfy similar axioms of a {\it right triangulated category} \cite{Beligiannis/Marmaridis94} except the rotation axiom. Right triangulated categories and exact categories are examples of partial right triangulated categories. The precise definition of a partial right triangulated category can be found in Definition \ref{def:prtc}.  A partial left triangulated category is defined dually. Our first main result is the following:
\vskip5pt
\noindent{\bf Theorem} (\ref{thm:main}). {\it $(\mathrm i)$ \ If $(\A, \Sigma, \R(\C), \X)$ is a partial right triangulated category, then the subfactor $\C/\X$ has a right triangulated structure induced by $\R(\C)$.
	
	$(\mathrm{ii})$ \ If $(\A, \Omega, \rmL(\C), \X)$ is a partial left triangulated category, then the subfactor $\C/\X$ has a left triangulated structure induced by $\rmL(\C)$.}
\vskip5pt
\noindent This result covers many existed constructions of one-sided triangulated categories in various settings \cite[Theorem 2.12]{Beligiannis/Marmaridis94}, \cite[Theorem 7.1]{Beligiannis2000}, \cite[Theorem 3.9]{Liu-Zhu} and \cite[Theorem 3.7]{ZWLi}.

\vskip5pt

In general, it is rare that the exact structure of an exact category could be inherited by its factor category. A surprising example was observed by Kussin, Lenzing and Meltzer in their study of weighted projective lines \cite{Kussin/Lenzing/Meltz}. We use our first result to give a general framework for the construction of exact factor categories.
\vskip5pt
\noindent{\bf Theorem} (\ref{thm:sexact}). {\it  Let $(\A, \E)$ be an exact category and $\X$ an additive subcategory of $\A$. Assume that for each $A\in \A$, there are conflations $X_1\to X_0\stackrel{p}\to A $ and $ A\stackrel{i}\to X^0\to X^1$ such that $p$ is an $\X$-precover, $i$ is an $\X$-preenvelope and $X_1, X^1\in \X$, then every morphism in $\A/\X$ has a kernel and a cokernel and $\A/\X$ has an induced exact structure by $\E$.}

\vskip5pt
Our third main result is about the construction of triangulated subfactor categories. The result is based on the notion of a {\it partial triangulated category} which is an additive category with compatible partial right and left triangulated structures. It covers many existed constructions of triangulated subfactor categories \cite[Theorem 2.6]{Happel88}, \cite[Theorem 4.2]{Iyama-Yoshino}, \cite[Theorem 3.3 (i)]{Beligiannis13} and \cite[Theorem 6.17]{Nakaoka}:

\vskip5pt
\noindent{\bf Theorem} (\ref{thm:ptc}). {\it Let $(\A, \Omega, \Sigma, \rmL(\C), \R(\C), \X)$ be a partial triangulated category. Then the subfactor $\C/\X$ is a triangulated category.}

\vskip5pt
\noindent We use this result to give a Quillen model structure of Iyama-Yoshino triangulated subfactors in Corollary \ref{cor:IYmodel}.

We now sketch the contents of the paper. In Section 2, we introduce the new notion of partial one-sided triangulated categories and give some examples. Section 3 is devoted to triangulating the subfactor categories arising from partial one-sided triangulated categories. In Section 4, we give various examples of partial one-sided triangulated categories in additive, exact and triangulated categories. We construct, in Section 5, the exact subfactor categories from exact categories and abelian subfactor categories from triangulated categories. In Section 6, we introduce the notion of a partial triangulated category, prove our third main result and then model Iyama-Yoshino triangulated subfactor categories.

\vskip5pt

Throughout this paper, unless otherwise stated, that all subcategories of additive categories considered are full, closed under isomorphisms, all functors between additive categories are assumed to be additive.

\subsection*{Acknowledgements} I would like to thank Henning Krause, Xiao-Wu Chen, Yu Ye and Greg Stevenson for
their helpful discussions and suggestions. 
I would especially like to thank Yan Lu for her translating \cite{Keller/Vossieck87} into English.
\section{Partial one-sided triangulated categories}

In this section we recall the definition of a right triangulated category and introduce the new notion of a partial right triangulated category.

\subsection*{Stable categories of additive categories} Let $\C$ be an additive category and $\X$ an additive subcategory of $\C$. Given two morphisms $f,f'\colon A\to B$
in $\C$, we say that $f$ is {\it stably equivalent} to $f'$, written $f\sim f'$, if $f-f'$ factors through $\X$ (that is, there exists some object $X\in \X$ such that
there are two morphisms $u\colon X\to B$ and $v\colon A\to X $ satisfying $f-f'=u\circ v$). We use $\ul{f}$ to denote the stable equivalence class of $f$.
It is well known that stable equivalence is an equivalence relation which is compatible with composition. That is, if $f\sim f'$, then $f\circ k\sim f'\circ k$ and
 $h\circ f\sim h\circ f'$ whenever the compositions make sense. The {\it stable} or {\it factor category} $\C/\X$ is the category whose objects are the objects of $\C$, and whose morphisms are the stable equivalence classes of $\C$. Recall that the stable category $\C/\X$ is an additive category.

\subsection*{Right triangulated categories} If $\H$ is an arbitrary category endowed with a functor $\Sigma\colon \H\to \H$ (such a category is called a {\it category with suspension} \cite{Heller68}), following \cite{Keller90}, a sequence of the form
$$A\stackrel{f}\to B\stackrel{g}\to C \stackrel{h}\to \Sigma(A)$$
in $\H$ will be called a {\it right $\Sigma$-sequence}. A {\it morphism of right $\Sigma$-sequences} is given by a commutative diagram
	\begin{equation*}
	\xy\xymatrixcolsep{2pc}\xymatrix@C14pt@R14pt{  A\ar[r]^f \ar[d]_\alpha & B\ar[r]^g\ar[d]^\beta& C\ar[d]^\gamma\ar[r]^{h} & \Sigma(A)\ar[d]^{\Sigma(\alpha)}
		\\
		A'\ar[r]^{f'} & B'\ar[r]^{g'}& C'\ar[r]^{h'} & \Sigma(A')}
	\endxy
	\end{equation*}
The composition is the obvious one. Dually we can define the notion of a {\it left $\Sigma$-sequence} in $\H$.

We recall the definition of a right triangulated category; see the dual of \cite[Definition 2.2]{Beligiannis/Marmaridis94} and \cite[Definition 1.1]{ABM}.
\begin{definition} \label{def:rtricat}\ Let $\T$ be an additive category endowed with an additive endofunctor $\Sigma$. Let $\Delta$ be a class of right $\Sigma$-sequences called {\it right triangles}. The triple $(\T, \Sigma, \Delta)$ is called a {\it right triangulated category} if $\Delta$ is closed under isomorphisms and the following four axioms hold:

	(RT1) \ For any morphism $f\colon A\to B$, there is a right $\Sigma$-sequence $A\xrto{f} B\to C\to \Sigma(A)$ in $\Delta$. For any object $A\in \T$, the right $\Sigma$-sequence $0\to A\xrto{1_A} A\to 0$ is in $\Delta$.
	
	(RT2)  (Rotation axiom) \ If $A\xrto{f} B\xrto{g} C\xrto{h} \Sigma(A)$ is a right triangle, so is $B\xrto{g} C\xrto{h} \Sigma(A) \xrto{-\Sigma(f)} \Sigma(B)$.

	(RT3) \ If the rows of the following diagram are right triangles and the leftmost square is commutative, then there is a morphism $\gamma\colon C\to C'$ making the whole diagram commutative:	
\[\xy\xymatrixcolsep{2pc}\xymatrix@C14pt@R14pt{A\ar[r]^f\ar[d]_{\alpha}&B \ar[r]^g\ar[d]^\beta&C\ar[r]^-h\ar@{.>}[d]^\gamma & \Sigma(A)\ar[d]^{\Sigma(\alpha)}\\
A'\ar[r]^{f'}&B'\ar[r]^{g'}&C'\ar[r]^-{h'}&\Sigma(A')}
	\endxy\]

(RT4) \ For any three right triangles: $A\xrto{f} B\xrto{l} C'\to \Sigma(A)$, $B\xrto{g} C\xrto{h} A'\xrto{j} \Sigma(B)$ and $A\xrto{g\circ f} C\to B'\to \Sigma{A}$, there is a commutative diagram	
\[\xy\xymatrixcolsep{2pc}\xymatrix@C14pt@R14pt{
A\ar[r]^f\ar@{=}[d]&B\ar[r]^l\ar[d]^g&C'\ar[r]\ar@{.>}[d]& \Sigma(A)\ar@{=}[d]\\
A\ar[r]^{g\circ f}&C\ar[r]\ar[d]^{h}&B'\ar[r]\ar@{.>}[d]&\Sigma(A)\ar[d]^{\Sigma(f)}\\
&A'\ar@{=}[r]\ar[d]^{j}&A'\ar[r]^-{j}\ar[d]&\Sigma(B)\\
&\Sigma(B)\ar[r]^{\Sigma(l)}&\Sigma(C')}
	\endxy\]
	such that the second column from the right is a right triangle.
\end{definition}
The notion of a {\it left triangulated category} is defined dually.
\vskip5pt
 One-sided triangulated categories as a generalization of triangulated categories arise naturally in the study of homotopy theories \cite{Heller60, Quillen67, Heller68, Brown73} and derived categories \cite{Keller/Vossieck87, Keller91}. When the endofunctor $\Sigma$ is an auto-equivalence, a right triangulated category $(\T, \Sigma, \Delta)$ is a triangulated category in the sense of \cite{Verdier}.
\subsection*{Partial right triangulated categories}
Let $\A$ be an additive category endowed with an additive endofunctor $\Sigma\colon \A\to \A$. We use $\X\subseteq \C$ to denote that $\X, \C$ are additive subcategories of $\A$ such that $\X$ is a subcategory of $\C$. A morphism $f\colon A\to B$ in $\A$ is said to be an {\it $\X$-monic} if the induced morphism $f^*=\Hom_\A(f, \X)\colon \Hom_\A(B, \X)\to \Hom_\A(A, \X)$ is surjective. The notion of an {\it$\X$-epic} is defined dually. Recall that a morphism $f\colon A\to X$ in $\A$ is called an {\it $\X$-preenvelope} (also called a {\it left $\X$-approximation} of $A$ in some literatures) if $f$ is an $\X$-monic and $X\in \X$. Dually a morphism $g\colon X\to A$ is called an {\it $\X$-precover} if $g$ is an $\X$-epic and $X\in \X$.

 A right $\Sigma$-sequence $A\stackrel{f}\to B\stackrel{g}\to C\stackrel{h}\to \Sigma(A)$
in $\A$ is called a {\it right $\C$-sequence} if $C\in \C$, $g$ is a {\it weak cokernel} of $f$ (i.e. the induced sequence
$\Hom_\A(C, \A)\to \Hom_\A(B, \A)\to \Hom_\A(A, \A)$ is exact) and $h$ is a weak cokernel of $g$.

Dually, a left $\Sigma$-sequence $\Sigma(B)\xrto{u} K\xrto{v} A\xrto{f} B$ is called a {\it left $\C$-sequence} if $K\in \C$ and $v$ is a {\it weak kernel} of $f$ and $u$ is a weak kernel of $v$.
\vskip5pt
Now we are in the position to introduce the new concept of partial one-sided triangulated categories.
\begin{definition}\label{def:prtc}\ Let $\A$ be an additive category endowed with an additive endofunctor $\Sigma$. Let $\X\subseteq \C$ be two additive subcategories of $\A$ and $\R(\C)$ a class  of right $\C$-sequences (called {\it right $\C$-triangles}).  The pair $(\R(\C),\X)$ is said to be a {\it partial right triangulated structure on $\A$} if $\R(\C)$ is closed under isomorphisms and finite direct sums and the following axioms hold:

(PRT1) (i) \ For each $A\in \C$, there is a right $\C$-triangle $A\xrto{i} X\to U\to \Sigma(A)$ with $i$ an $\X$-preenvelope.

 (ii) \ For each morphism $f\colon A\to B$ in $\C$, $A\xrightarrow{\left(\begin{smallmatrix}
	1 \\
	f
	\end{smallmatrix}\right)} A\oplus B \xrightarrow{(f, -1)} B\xrto{0} \Sigma(A)$ is in $\R(\C)$.

(iii) \ If $A\xrto{i} X\to U\to \Sigma(A)$ is in $\R(\C)$ with $i$ an $\X$-preenvelope in $\C$, then for any morphism $f\colon \ A\to B$ in $\C$, there is a right $\C$-triangle $A\xrightarrow{\left(\begin{smallmatrix}
	i \\
	f
	\end{smallmatrix}\right)} X\oplus B \to N\to \Sigma(A)$.

(PRT2) \ For any commutative diagram of right $\C$-triangles
\[\xy\xymatrixcolsep{2pc}\xymatrix@C14pt@R14pt{
A\ar[r]^f\ar[d]_{\alpha}&B\ar[r]\ar[d]&C\ar[r]\ar[d]^\gamma& \Sigma(A)\ar[d]^{\Sigma(\alpha)}\\
A'\ar[r]&X\ar[r]^s&U\ar[r]&\Sigma(A')}
\endxy\]
	with $X\in \X$, if $\alpha$ factors through $f$, then $\gamma$ factors through $s$.

(PRT3) \ If the rows of the following diagram are in $\R(\C)$ and the leftmost square is commutative, then there is a morphism $\gamma\colon C\to C'$ making the whole diagram commutative:
		\[\xy\xymatrixcolsep{2pc}\xymatrix@C14pt@R14pt{A\ar[r]^f\ar[d]_{\alpha}&B \ar[r]^g\ar[d]^\beta&C\ar[r]^-h\ar@{.>}[d]^\gamma & \Sigma(A)\ar[d]^{\Sigma(\alpha)}\\
A'\ar[r]^{f'}&B'\ar[r]^{g'}&C'\ar[r]^-{h'}&\Sigma(A')}
	\endxy\]

(PRT4) \ If $A\xrto{f} B\xrto{l} C'\to \Sigma(A)$, $B\xrto{g} C \xrto{h} A'\xrto{j} \Sigma(B)$ and $A\xrightarrow{g\circ f} C \xrto{k} B'\to \Sigma(A)$ are in $\R(\C)$ such that $f, g$ are $\X$-monics in $\C$, then there is a commutative diagram
\[\xy\xymatrixcolsep{2pc}\xymatrix@C14pt@R14pt{
A\ar[r]^f\ar@{=}[d]&B\ar[r]^l\ar[d]^g&C'\ar[r]\ar@{.>}[d]^r& \Sigma(A)\ar@{=}[d]\\
A\ar[r]^{g\circ f}&C\ar[r]\ar[d]^{h}&B'\ar[r]\ar@{.>}[d]&\Sigma(A)\ar[d]^{\Sigma(f)}\\
&A'\ar@{=}[r]\ar[d]^{j}&A'\ar[r]^-{j}\ar[d]&\Sigma(B)\\
&\Sigma(B)\ar[r]^{\Sigma(l)}&\Sigma(C')}
	\endxy\]
such that the second column from the right is in $\R(\C)$ with $r$ an $\X$-monic.
\end{definition}

The quadruple $(\A, \Sigma, \R(\C), \X))$ is said to be a {\it partial right triangulated category} if $(\R(\C), \X)$ is a partial right triangulated structure on $\A$.

Dually, we can define the notion of a {\it partial left triangulated category}.

\begin{example} \label{babyexam} (i) \ If $(\T, \Sigma, \Delta)$ is a right triangulated category, take $\X=0, \C=\T$ and $\R(\T)=\Delta$, then $(\T, \Sigma, \R(\T),0)$ is a partial right triangulated category. Dually, if $(\T, \Omega, \nabla)$ is a left triangulated category, then $(\T, \Omega, \rmL(\T)=\nabla, 0)$ is a partial left triangulated category.
\vskip5pt
(ii) Let $(\A, \mathcal{E})$ be an exact category and $\rmL(\A)=\{ 0\to A\to B\to C \ | \ A\to B\to C \in \mathcal{E}\}$. Then $(\A, 0, \rmL(\A), \A)$ is a partial left triangulated category. Denote by $\mathcal{P}$ the subcategory of projectives in $\A$, if $\A$ has enough projectives, i.e. each object in $\A$ has a $\mathcal{P}$-precover, then $(\A, 0, \rmL(\A), \mathcal{P})$ is a partial left triangulated category. Dually, let $\R(\A)=\{ A\to B\to C \to 0\ | \ A\to B\to C \in\mathcal{E}\}$, then $(\A, 0, \R(\A), \A)$ is a partial right triangulated category, and if $\A$ has enough injectives, then $(\A,  0, \R(\A),\I)$ is a partial right triangulated category, where $\I$ is the subcategory of injectives in $\A$.
\end{example}

Next we fix a partial right triangulated category $(\A, \Sigma, \R(\C), \X)$.

\begin{lemma} \label{lemma:uniqueness}\ 	$({\mathrm i})$  For a given morphism $f\colon A\to B$ in $\A$, if $A\xrto{f} B\xrto{g} C\xrto{h} \Sigma(A)$ is in $\R(\C)$, then it is unique up to isomorphism.	
	
	$(\mathrm{ii})$ \ Let $A\xrto{r} X\xrto{s} U\xrto{t} \Sigma(A)$ and $A\xrto{r'} X'\xrto{s'} U'\xrto{t'} \Sigma(A)$ be two right $\C$-sequences in $\R(\C)$ such that $r,r'$ are $\X$-preenvelopes in $\C$. Then $U\cong U'$ in $\C/\X$.
\end{lemma}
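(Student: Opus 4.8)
The plan is to derive both parts from axioms (PRT2)--(PRT3) together with the weak-cokernel conditions that are built into the definition of a right $\C$-sequence.

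For (i), given two members $A\xrto{f}B\xrto{g}C\xrto{h}\Sigma(A)$ and $A\xrto{f}B\xrto{g'}C'\xrto{h'}\Sigma(A)$ of $\R(\C)$, I would first apply (PRT3) to the identity square on $A$ and $B$, in both directions, obtaining $\gamma\colon C\to C'$ and $\gamma'\colon C'\to C$ with $\gamma g=g'$, $h'\gamma=h$, $\gamma'g'=g$ and $h\gamma'=h'$. The crux is then to check that $\delta:=\gamma'\gamma-1_C$ is square-zero: since $\delta g=\gamma'\gamma g-g=\gamma'g'-g=0$ and $h$ is a weak cokernel of $g$, we may write $\delta=e\circ h$ for some $e\colon\Sigma(A)\to C$; since moreover $h\circ\delta=h\gamma'\gamma-h=h'\gamma-h=0$, we get $h\circ e\circ h=h\circ\delta=0$ and hence $\delta^{2}=e\circ(h\circ e\circ h)=0$. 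Then $\gamma'\gamma=1_C+\delta$ is invertible with inverse $1_C-\delta$, and symmetrically $\gamma\gamma'$ is invertible; so $\gamma$ has both a left and a right inverse and is an isomorphism, and $(1_A,1_B,\gamma)$ is an isomorphism of right $\Sigma$-sequences.

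For (ii), since $r$ is $\X$-monic and $X'\in\X$ I would factor $r'=a\circ r$ for some $a\colon X\to X'$, and symmetrically $r=b\circ r'$ for some $b\colon X'\to X$. Applying (PRT3) to the two triangles with left-hand squares $(1_A,a)$ and $(1_A,b)$ gives $\gamma\colon U\to U'$ and $\gamma'\colon U'\to U$ with $\gamma s=s'a$, $t'\gamma=t$, $\gamma's'=sb$ and $t\gamma'=t'$. Setting $\delta:=\gamma'\gamma-1_U$, these relations give $\delta s=\gamma'\gamma s-s=s(ba-1_X)$, $t\circ\delta=t'\gamma-t=0$, and $(ba-1_X)\circ r=b\circ r'-r=0$; hence $(0_A,\,ba-1_X,\,\delta)$ is an endomorphism of the right $\C$-triangle $A\xrto{r}X\xrto{s}U\xrto{t}\Sigma(A)$ whose left-hand component, being zero, factors through $r$. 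Since $X\in\X$, axiom (PRT2) now forces $\delta$ to factor through $s$, hence through $\X$, i.e.\ $\ul{\gamma'\gamma}=1_{\ul U}$ in $\C/\X$; the symmetric argument (with $ab-1_{X'}$ in place of $ba-1_X$) yields $\ul{\gamma\gamma'}=1_{\ul{U'}}$, and therefore $\ul{\gamma}$ is an isomorphism $U\xrto{\sim}U'$ in $\C/\X$.

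I expect the only step requiring real care to be the square-zero argument in (i): because there is no rotation axiom available here, the $\Hom$-exact sequences supplied by the weak-cokernel conditions are too short to run the usual five-lemma proof that the comparison map $\gamma$ is invertible, so one is forced to extract invertibility from nilpotence as above. In (ii) the analogous sensitive choice is the middle component $ba-1_X$ of the self-morphism, arranged precisely so that the hypothesis of (PRT2) is met; the rest of (ii) is bookkeeping with the commutativities produced by (PRT3).
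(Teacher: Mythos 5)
Your proof is correct and follows essentially the same route as the paper: part (i) via (PRT3) in both directions plus the weak-cokernel/nilpotence trick to invert the comparison endomorphism, and part (ii) via the self-morphism with components $(0,\,ba-1_X,\,\gamma'\gamma-1_U)$ fed into (PRT2). No gaps.
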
	

\begin{proof} \	(i) \ We first show that any morphism $m: C\to C $ in the following commutative diagram is an isomorphism:
\[\xy\xymatrixcolsep{2pc}\xymatrix@C14pt@R14pt{
A\ar[r]^f\ar@{=}[d]&B\ar[r]^g\ar@{=}[d]&C\ar[r]^-h\ar[d]^m& \Sigma(A)\ar@{=}[d]\\
A\ar[r]^f&B\ar[r]^g&C\ar[r]^-h&\Sigma(A)}
\endxy\]
		In fact, since $(m-1_C)\circ g=m\circ  g-g=0$ and $h$ is a weak cokernel of $g$, there is a morphism $l: \Sigma(A)\to C$ such that $m-1_C=l\circ h$. Then $(m-1_C)^2=l\circ h\circ (m-1_C)=l\circ (h\circ m -h)=l\circ 0=0$. Thus $m$ is an isomorphism.
	
	If there is another right $\C$-triangle $A\xrto{f} B\xrto{g'} C'\xrto{h'} \Sigma(A)$, then by (PRT3), there are morphisms $u\colon C\to C'$ and $u'\colon C'\to C$ making the following diagram commutative: 	
	\[\xy\xymatrixcolsep{2pc}\xymatrix@C14pt@R14pt{
A\ar[r]^f\ar@{=}[d]&B\ar[r]^g\ar@{=}[d]&C\ar[r]^-h\ar[d]^u& \Sigma(A)\ar@{=}[d]\\
A\ar[r]^f\ar@{=}[d]&B\ar[r]^{g'}\ar@{=}[d]&C'\ar[r]^-{h'}\ar[d]^{u'}&\Sigma(A)\ar@{=}[d]\\
A\ar[r]^f&B\ar[r]^g&C\ar[r]^-h&\Sigma(A)}
\endxy\]

	Then both $u'\circ u$ and $u\circ u'$ are isomorphisms by the previous proof, thus $u$ is an isomorphism.

 (ii) \ There is a commutative diagram of right $\C$-triangles:
	 	\[\xy\xymatrixcolsep{2pc}\xymatrix@C14pt@R14pt{
A\ar[r]^r\ar@{=}[d]&X\ar[r]^s\ar[d]^v&U\ar[r]^-t\ar[d]^\delta& \Sigma(A)\ar@{=}[d]\\
A\ar[r]^{r'}\ar@{=}[d]&X'\ar[r]^{s'}\ar[d]^{v'}&U'\ar[r]^-{t'}\ar[d]^{\delta'}&\Sigma(A)\ar@{=}[d]\\
A\ar[r]^r&X\ar[r]^s&U\ar[r]^-t&\Sigma(A)}
\endxy\]
 where the existence of $v$ and $v'$ is since both $r$ and $r'$ are $\X$-monics, and the existence of $\delta$ and $\delta'$ is by (PRT3). Then we have the following commutative diagram
	 	 	\[\xy\xymatrixcolsep{2pc}\xymatrix@C24pt@R14pt{
A\ar[r]^r\ar[d]_0&X\ar[r]^s\ar[d]^{v'\circ v-1_X}&U\ar[r]^-t\ar[d]^{\delta'\circ \delta-1_U}& \Sigma(A)\ar[d]^0\\
A\ar[r]^r&X\ar[r]^s&U\ar[r]^-t&\Sigma(A)}
\endxy\]
	 Thus $\delta'\circ \delta-1_U$ factors through $s$ by (PRT2) since $0: A\to A$ factors through $r$. In other words $\ul{\delta}'\circ \ul{\delta}=\ul{1}_U$. Similarly, we can show $\ul{\delta}\circ \ul{\delta}'=\ul{1}_{U'}$. Thus $U$ is isomorphic to $U'$ in $\C/\X$.\end{proof}

\begin{lemma} \label{lemma:notation}
	Let $A\xrto{f} B\xrto{g} C\xrto{h} \Sigma(A)$ and $D\xrto{r} X\xrto{s} U\xrto{t}\Sigma(D)$ be two right $\C$-sequences in $\R(\C)$ with $f$ an $\X$-monic and $X\in \X$. If there is a morphism $\alpha \colon A\to D$, then there is a morphism $(\alpha, \beta, \gamma)$ of right $\C$-triangles:
		\[\xy\xymatrixcolsep{2pc}\xymatrix@C14pt@R14pt{
A\ar[r]^{f}\ar[d]_\alpha & B\ar[r]^g\ar@{.>}[d]^\beta&C\ar[r]^-h\ar@{.>}[d]^\gamma& \Sigma(A)\ar[d]^{\Sigma(\alpha)}\\
D\ar[r]^r&X\ar[r]^{s}&U\ar[r]^-{t}&\Sigma(D)}
\endxy\]
	and $\ul{\gamma}$ is uniquely determined by $\ul{\alpha}$ in $\C/\X$.
	\end{lemma}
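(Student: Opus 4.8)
The plan is to build the morphism $(\alpha,\beta,\gamma)$ first and then check that its last component is well defined in $\C/\X$; almost all of the content sits in the second step.

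\emph{Building the morphism.} Starting from $\alpha\colon A\to D$, I would first produce $\beta$. Since $X\in\X$ and $f$ is an $\X$-monic, the map $\Hom_\A(f,X)\colon \Hom_\A(B,X)\to \Hom_\A(A,X)$ is surjective, so there is $\beta\colon B\to X$ with $\beta\circ f=r\circ\alpha$. The leftmost square of the desired diagram now commutes, and both rows lie in $\R(\C)$; applying (PRT3) to this commutative square produces $\gamma\colon C\to U$ completing the diagram to a morphism of right $\C$-triangles.

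\emph{Well-definedness of $\ul{\gamma}$.} Suppose $(\alpha_1,\beta_1,\gamma_1)$ is a second morphism of right $\C$-triangles of the same shape with $\ul{\alpha}=\ul{\alpha_1}$; I would show $\ul{\gamma}=\ul{\gamma_1}$ (the case $\alpha_1=\alpha$ also records that $\ul{\gamma}$ does not depend on the chosen $\beta$). Subtracting componentwise yields the commutative diagram
\[\xy\xymatrixcolsep{2pc}\xymatrix@C14pt@R14pt{
A\ar[r]^{f}\ar[d]_{\alpha-\alpha_1} & B\ar[r]^g\ar[d]^{\beta-\beta_1}&C\ar[r]^-h\ar[d]^{\gamma-\gamma_1}& \Sigma(A)\ar[d]^{\Sigma(\alpha-\alpha_1)}\\
D\ar[r]^r&X\ar[r]^{s}&U\ar[r]^-{t}&\Sigma(D)}
\endxy\]
whose rows are the original right $\C$-triangles and whose middle lower term $X$ is in $\X$, so (PRT2) can be applied once we know that $\alpha-\alpha_1$ factors through $f$. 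By hypothesis $\alpha-\alpha_1=u\circ v$ for some $v\colon A\to X'$ and $u\colon X'\to D$ with $X'\in\X$; since $f$ is an $\X$-monic, $v$ lifts along $f$ to some $\tilde v\colon B\to X'$, so $\alpha-\alpha_1=(u\circ\tilde v)\circ f$ factors through $f$. Then (PRT2) gives that $\gamma-\gamma_1$ factors through $s\colon X\to U$, hence through $\X$ since $X\in\X$; that is, $\ul{\gamma}=\ul{\gamma_1}$ in $\C/\X$.

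\emph{Main obstacle.} The construction step is routine. The real point is the well-definedness: (PRT2) requires the left vertical map to factor \emph{through $f$}, not merely through $\X$, and the trick that makes the argument go through is that the $\X$-monic property of $f$ promotes ``factors through $\X$'' into ``factors through $f$''. The only other thing to be careful about is that the componentwise difference of two morphisms of right $\C$-triangles is again one, which uses additivity of $\Sigma$.
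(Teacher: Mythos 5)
Your proposal is correct and follows essentially the same route as the paper: $\beta$ exists because $f$ is an $\X$-monic and $X\in\X$, $\gamma$ comes from (PRT3), and well-definedness is obtained by reducing (via additivity) to the case where the left vertical map vanishes in $\C/\X$, upgrading ``factors through $\X$'' to ``factors through $f$'' using the $\X$-monic property, and then applying (PRT2) to conclude that the right vertical map factors through $s$, hence through $\X$. Your write-up merely makes explicit the componentwise subtraction that the paper leaves implicit.
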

\begin{proof}
	The existence of $\beta$ is since $f$ is an $\X$-monic and $X\in \X$. The existence of $\gamma$ is by (PRT3). Assume that $\ul{\alpha}=0$ in $\C/\X$, i.e., $\alpha$ factors through some object in $\X$, then it must factor through $f$ since $f$ is an $\X$-monic. Thus $\gamma$ factors through $s$ by (PRT2), in other words $\ul{\gamma}=0$ in $\C/\X$.
	\end{proof}

\subsection*{Suspension functor of the subfactor category of a partial right triangulated category} If $(\A,\Sigma, \R(\C), \X)$ is a partial right triangulated category, then by (PRT1)(i), for each object $A\in \C$, we can {\it fix} a right $\C$-triangle $A\xrto{i^A} X^A\xrto{p^A} U^A\xrto{q^A} \Sigma(A)$ with $i^A$ an $\X$-preenvelope. Then we can define a functor $\Sigma^\X\colon \C/\X\to \C/\X$ by sending an object $A$ to $U^A$ and a morphism $\ul{f}\colon A\to B$ to $\ul{\kappa}^f$, where $\kappa^f$ satisfies the following commutative diagram:
\begin{equation}\label{kappaf}
\xy\xymatrixcolsep{2pc}\xymatrix@C14pt@R14pt{
A\ar[r]^{i^A}\ar[d]_f&X^A\ar[r]^{p^A}\ar[d]^{\sigma^f}&U^A\ar[r]^-{q^A}\ar[d]^{\kappa^f}& \Sigma(A)\ar[d]^{\Sigma(f)}\\
B\ar[r]^{i^B}&X^B\ar[r]^{p^B}&U^B\ar[r]^-{q^B}&\Sigma(B)}
\endxy
\end{equation}
By Lemma \ref{lemma:notation}, $\ul{\kappa}^f$ is uniquely determined by $\ul{f}$ and thus $\Sigma^\X$ is well defined.

\section{From partial one-sided triangulated categories to one-sided triangulated categories}

In this section, we fix a partial right triangulated category $(\A, \Sigma, \R(\C), \X)$ and construct the right triangulated structure on the subfactor category $\C/\X$.
\subsection*{Right triangulated structures induced from partial right triangulated structures}
 If $A\stackrel{f}\to B\stackrel{g}\to C\stackrel{h}\to \Sigma(A) $ is a right $\C$-triangle with $f$ an $\X$-monic in $\C$, by Lemma \ref{lemma:notation}, we have a commutative diagram
\begin{equation}\label{xif}
\xy\xymatrixcolsep{2pc}\xymatrix@C16pt@R14pt{
A\ar[r]^f\ar@{=}[d]&B\ar[r]^g\ar[d]^{\delta^f}&C\ar[r]^-h\ar[d]^{\xi(f,g)}& \Sigma(A)\ar@{=}[d]\\
A\ar[r]^{i^A}&X^A\ar[r]^{p^A}&U^A\ar[r]^-{q^A}&\Sigma(A)}
\endxy
\end{equation}
and the residue class of $\ul{\xi}(f,g)$ is uniquely determined by the upper row. Thus there is an induced right $\Sigma^\X$-sequence $A\stackrel{\ul{f}}\to B\stackrel{\ul{g}}\to C\xrightarrow{\ul{\xi}(f,g)}\Sigma^\X(A)$ in $\C/\X$ which is said to be a {\it standard right triangle}. We use $\Delta^\X$ to denote the class of right $\Sigma^\X$-sequences in $\C/\X$ which are isomorphic to standard right triangles. The elements in $\Delta^\X$ are called {\it distinguished right triangles}.

Dually, if $(\A, \Omega, \rmL(\C), \Y)$ is a partial left triangulated category, we can construct an additive endofunctor $\Omega_\Y\colon \C/\Y\to \C/\Y$, and define the corresponding {\it standard left triangles}. We use $\nabla_\Y$ to denote the class of left $\Omega_\Y$-sequences in $\C/\Y$ which are isomorphic to standard left triangles.

\vskip5pt
We have the following theorem
\begin{theorem} \label{thm:main}  $(\rm i)$ \ If $(\A, \Sigma, \R(\C), \X)$ is a partial right triangulated category, then $(\C/\X, \Sigma^\X, \Delta^\X)$ is a right triangulated category.
	
	$(\mathrm{ii})$ \ If $(\A, \Omega, \rmL(\C), \Y)$ is a partial left triangulated category, then $(\C/\Y, \Sigma_\Y, \nabla_\Y)$ is a left triangulated category.
\end{theorem}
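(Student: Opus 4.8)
The plan is to verify the four axioms (RT1)--(RT4) for the triple $(\C/\X, \Sigma^\X, \Delta^\X)$, deducing each from the corresponding partial axiom (PRT1)--(PRT4) together with the two lemmas already established. Throughout I would work systematically with the description of distinguished right triangles as isomorphism classes (in $\C/\X$) of \emph{standard} right triangles $A\xrto{\ul f} B\xrto{\ul g} C\xrightarrow{\ul\xi(f,g)}\Sigma^\X(A)$ coming from a right $\C$-triangle $A\xrto f B\xrto g C\xrto h \Sigma(A)$ with $f$ an $\X$-monic, as set up in diagram \eqref{xif}. A preliminary observation I would record first is that $\Sigma^\X$ is additive and that, up to the natural isomorphism provided by Lemma \ref{lemma:uniqueness}(ii), the value $\Sigma^\X(A)=U^A$ does not depend on the chosen right $\C$-triangle with $\X$-preenvelope first term; this flexibility is what lets one rotate and splice triangles without being trapped by the particular fixed choices $A\xrto{i^A}X^A\xrto{p^A}U^A$.

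For (RT1): given $\ul f\colon A\to B$ in $\C/\X$, I would first replace $f$ by an $\X$-monic without changing its stable class. The standard trick is to form $\bigl(\begin{smallmatrix} i^A\\ f\end{smallmatrix}\bigr)\colon A\to X^A\oplus B$, which is an $\X$-monic because $i^A$ already is one, and which is stably equal to $f$ after composing with the (split, stably trivial) projection $X^A\oplus B\to B$. Axiom (PRT1)(iii) gives a right $\C$-triangle starting with this $\X$-monic, and Lemma \ref{lemma:notation} produces the comparison making it a standard triangle; the morphism $X^A\oplus B\to B$ then fits into an isomorphism of right $\Sigma^\X$-sequences in $\C/\X$, so $\ul f$ embeds in a distinguished triangle. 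The split triangle $0\to A\xrto{1}A\to 0$ is distinguished because of (PRT1)(ii) with $f=1_A$ (after discarding the direct summand $A$), or simply because it is isomorphic in $\C/\X$ to the image of $A\xrightarrow{(1,1_A)^{t}}A\oplus A\to A\to \Sigma A$. Axiom (RT3) is essentially immediate from (PRT3): any standard triangle is by construction a right $\C$-triangle, and (PRT3) supplies the dotted $\gamma$; one only has to check that $\gamma$ may be taken compatibly with the $\Sigma^\X$-column, which follows from the uniqueness in Lemma \ref{lemma:notation}.

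The rotation axiom (RT2) is the part I expect to be the main obstacle, since it is precisely the axiom that (PRT1) does \emph{not} postulate directly — the whole point of ``partial'' is that rotation is not assumed. Here is the strategy. Start with a standard triangle arising from $A\xrto f B\xrto g C\xrto h\Sigma(A)$ with $f$ an $\X$-monic. I want to show $B\xrto{\ul g}C\xrightarrow{\ul\xi(f,g)}\Sigma^\X(A)\xrightarrow{-\Sigma^\X(\ul f)}\Sigma^\X(B)$ is distinguished. The key input is (PRT4) applied to the composite $A\xrto f B\xrto{i^B}X^B$: since $f$ is an $\X$-monic and $i^B$ is an $\X$-preenvelope (hence also an $\X$-monic), both hypotheses of (PRT4) are met, and the octahedron it produces has, in its second column from the right, a right $\C$-triangle whose first term is (stably) $C$, whose connecting data realize $\xi(f,g)$, and whose third term is $U^B=\Sigma^\X(B)$; moreover that column's first map is an $\X$-monic, so it \emph{is} a standard triangle (or isomorphic to one in $\C/\X$ via Lemma \ref{lemma:uniqueness}). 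Chasing the commutativities in the octahedral diagram, and using diagram \eqref{kappaf} to identify $\kappa^f$ with $\Sigma^\X(\ul f)$ and \eqref{xif} to identify $\xi(f,g)$, one reads off that this standard triangle is isomorphic to the rotated sequence $B\xrto{\ul g}C\xrightarrow{\ul\xi(f,g)}\Sigma^\X(A)\xrightarrow{-\Sigma^\X(\ul f)}\Sigma^\X(B)$ — the sign emerging, as usual, from the orientation of the maps in the octahedron. The bookkeeping of signs and of ``up to stable isomorphism'' identifications is the delicate part and will require care, but no new idea beyond (PRT4).

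Finally (RT4): given three distinguished triangles on $f$, $g$, $g\circ f$ in $\C/\X$, I lift $f,g$ to honest $\X$-monics in $\C$ as in the (RT1) step (replacing $f$ by $\bigl(\begin{smallmatrix}i^A\\ f\end{smallmatrix}\bigr)$ and similarly for $g$, which does not change stable classes and keeps $g\circ f$ stably unchanged up to the same kind of split modification), apply (PRT4) to obtain the octahedral diagram in $\A$, and then pass to $\C/\X$: every row and the relevant column of that diagram is a right $\C$-triangle with $\X$-monic first map, hence becomes a standard — thus distinguished — triangle, and the commuting squares descend to $\C/\X$. The only subtlety is matching the three given distinguished triangles (which came from \emph{arbitrary} right $\C$-triangles on $f$, $g$, $gf$) with the three sides of the constructed octahedron; this is handled by the uniqueness statement Lemma \ref{lemma:uniqueness}(i), which says a right $\C$-triangle on a fixed morphism is unique up to isomorphism, so after conjugating by those isomorphisms the octahedron we built witnesses (RT4). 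Part (ii) of the theorem is the formal dual, obtained by applying part (i) to the opposite category $\A^{\mathrm{op}}$ with $\Omega$ in place of $\Sigma$, so no separate argument is needed.
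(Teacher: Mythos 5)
Your (RT1), (RT3), (RT4) outline and the reduction of (ii) to duality are in line with the paper, but your argument for the rotation axiom (RT2) has a genuine gap, and it is exactly at the point where the real work of the theorem lies. First, applying (PRT4) to the composite $A\xrto{f}B\xrto{i^B}X^B$ requires all three right $\C$-triangles to lie in $\R(\C)$, in particular one on the composite $i^B\circ f$. The axioms do not provide this: (PRT1)(i) gives a triangle on \emph{one} chosen $\X$-preenvelope of each object, not on every $\X$-preenvelope, and (PRT1)(ii)--(iii) only give triangles on split maps and on maps of the form $\left(\begin{smallmatrix} i \\ f \end{smallmatrix}\right)$. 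Second, and more seriously, even granting such a triangle, the second-from-right column of the resulting (PRT4) diagram starts at the cone of the \emph{first} map of the composition, i.e.\ at $C$, and reads (stably) $C\to U^A\to U^B\to\Sigma^\X(C)$, using Lemma \ref{lemma:uniqueness}(ii) to identify the cone of $i^B\circ f$ with $U^A$. That is the rotation of the triangle you want, not the triangle itself: (RT2) demands a standard triangle beginning at $B$ with second term $C$ and third term $\Sigma^\X(A)$, whereas your octahedron produces one beginning at $C$ with terms $\Sigma^\X(A),\Sigma^\X(B)$. So your strategy proves that the double rotation is distinguished, which does not yield (RT2). The paper's fix is the different factorization $\left(\begin{smallmatrix} i^A \\ f \end{smallmatrix}\right)=\left(\begin{smallmatrix} i^A & 0 \\ 0 & 1 \end{smallmatrix}\right)\circ\left(\begin{smallmatrix} 1 \\ f \end{smallmatrix}\right)$: here the three triangles needed for (PRT4) are supplied by (PRT1)(ii), closure of $\R(\C)$ under finite direct sums, and (PRT1)(iii), and the cone of the first factor is $B$, so the column of diagram (\ref{pullback}) is $B\xrto{\eta}N\xrto{\zeta}U^A\to\Sigma(B)$ with $\ul{\xi}(\eta,\zeta)=\ul{\kappa}^f=\Sigma^\X(\ul{f})$ by Lemma \ref{lem:octahedral}; this is Lemma \ref{lem:dis-induce}(ii) and is precisely what makes the rotated sequence standard.

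Two smaller points. In (RT3) the given leftmost square commutes only in $\C/\X$, so before invoking (PRT3) in $\A$ you must correct $\beta$ to $\beta+s\circ\delta^f$, using that $f$ is an $\X$-monic and the factorization $u\circ\alpha-\beta\circ f=s\circ i^A$ together with $\delta^f\circ f=i^A$ from (\ref{xif}); your appeal to uniqueness in Lemma \ref{lemma:notation} does not address this lifting issue. In (RT4), replacing $f$ and $g$ by maps of the form $\left(\begin{smallmatrix} i \\ \cdot \end{smallmatrix}\right)$ forces you to check that a right $\C$-triangle exists on the modified composite as well (it does, again via (PRT1)(iii), but only after choosing the second replacement compatibly), whereas the paper simply works with standard triangles on $f$, $g$, $g\circ f$ whose first maps are already $\X$-monics by definition of standard triangles.
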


Before the proof we need two Lemmas.

\begin{lemma} \label{lem:octahedral} Assume that we have a commutative diagram of right $\C$-triangles in $\A$ such that $f, u$ are $\X$-monics in $\C$
\[\xy\xymatrixcolsep{2pc}\xymatrix@C14pt@R14pt{
A\ar[r]^f\ar[d]_\alpha&B\ar[r]^g\ar[d]^\beta&C\ar[r]^-h\ar[d]^\gamma& \Sigma(A)\ar[d]^{\Sigma(\alpha)}\\
L\ar[r]^u&M\ar[r]^v&N\ar[r]^-w&\Sigma(L)}
\endxy\]
Then $\Sigma^\X(\ul{\alpha})\circ \ul{\xi}(f, g)=\ul{\xi}(u, v)\circ \ul{\gamma} $ in $\C/\X$.
\end{lemma}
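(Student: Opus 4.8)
The plan is to build the two required morphisms of right $\C$-triangles that compute the two composites and then compare them. Concretely, by Lemma \ref{lemma:notation} applied to the right $\C$-triangle $A\xrto{f} B\xrto{g} C\xrto{h}\Sigma(A)$ (with $f$ an $\X$-monic) together with the morphism $\alpha\colon A\to L$ and the fixed right $\C$-triangle $L\xrto{i^L} X^L\xrto{p^L} U^L\xrto{q^L}\Sigma(L)$, we obtain a morphism of right $\C$-triangles lifting $\alpha$; but we can build it \emph{in two different ways} and then use the uniqueness part of Lemma \ref{lemma:notation} to identify them in $\C/\X$. The first way: first lift $1_A$ through $A\xrto{f}B\xrto{g}C$ and $A\xrto{i^A}X^A\xrto{p^A}U^A$, which by the very definition in diagram \eqref{xif} produces $\xi(f,g)$, and then compose with a lift of $\ul\alpha$ along $\Sigma^\X$, i.e. with $\kappa^\alpha$ from diagram \eqref{kappaf}, recalling that $\Sigma^\X(\ul\alpha)=\ul\kappa^\alpha$ by definition of the suspension functor. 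The second way: first lift $\gamma$ (equivalently $\alpha$ applied to the lower row) along $L\xrto{u}M\xrto{v}N$ versus $L\xrto{i^L}X^L\xrto{p^L}U^L$, which by the definition of $\xi(u,v)$ in \eqref{xif} (using that $u$ is an $\X$-monic) produces $\xi(u,v)$, precomposed with $\ul\gamma$.

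Carrying this out, I would paste the given diagram on top of the two fixed $\C$-triangles for $A$ and for $L$ and fill in the vertical maps. Stacking the original commutative diagram
\[\xy\xymatrixcolsep{2pc}\xymatrix@C14pt@R14pt{
A\ar[r]^f\ar[d]_\alpha&B\ar[r]^g\ar[d]^\beta&C\ar[r]^-h\ar[d]^\gamma& \Sigma(A)\ar[d]^{\Sigma(\alpha)}\\
L\ar[r]^u\ar@{=}[d]&M\ar[r]^v\ar[d]^{\delta^u}&N\ar[r]^-w\ar[d]^{\xi(u,v)}&\Sigma(L)\ar@{=}[d]\\
L\ar[r]^{i^L}&X^L\ar[r]^{p^L}&U^L\ar[r]^-{q^L}&\Sigma(L)}
\endxy\]
gives a morphism of right $\C$-triangles from the top row to the bottom row lifting $\alpha\colon A\to L$, and reading off the rightmost column this morphism is $\xi(u,v)\circ\gamma$ (with suitable vertical second map). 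On the other hand, stacking
\[\xy\xymatrixcolsep{2pc}\xymatrix@C14pt@R14pt{
A\ar[r]^f\ar@{=}[d]&B\ar[r]^g\ar[d]^{\delta^f}&C\ar[r]^-h\ar[d]^{\xi(f,g)}& \Sigma(A)\ar@{=}[d]\\
A\ar[r]^{i^A}\ar[d]_\alpha&X^A\ar[r]^{p^A}\ar[d]^{\sigma^\alpha}&U^A\ar[r]^-{q^A}\ar[d]^{\kappa^\alpha}&\Sigma(A)\ar[d]^{\Sigma(\alpha)}\\
L\ar[r]^{i^L}&X^L\ar[r]^{p^L}&U^L\ar[r]^-{q^L}&\Sigma(L)}
\endxy\]
gives a morphism of right $\C$-triangles from the top row to the bottom row also lifting $\alpha\colon A\to L$, with rightmost column $\kappa^\alpha\circ\xi(f,g)$. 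Since both are lifts of the \emph{same} morphism $\alpha\colon A\to L$ between the right $\C$-triangles $A\xrto{f}B\xrto{g}C\xrto{h}\Sigma(A)$ (with $f$ an $\X$-monic) and $L\xrto{i^L}X^L\xrto{p^L}U^L\xrto{q^L}\Sigma(L)$ (with $X^L\in\X$), the uniqueness clause of Lemma \ref{lemma:notation} forces $\ul{\kappa^\alpha\circ\xi(f,g)}=\ul{\xi(u,v)\circ\gamma}$ in $\C/\X$, and since $\Sigma^\X(\ul\alpha)=\ul\kappa^\alpha$ this is exactly $\Sigma^\X(\ul\alpha)\circ\ul\xi(f,g)=\ul\xi(u,v)\circ\ul\gamma$.

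The only genuine point requiring care — and what I expect to be the main obstacle — is verifying that the hypotheses of Lemma \ref{lemma:notation} are met so that the uniqueness clause applies, in particular that in the second stacked diagram the composite top-to-bottom really is a \emph{morphism of right $\C$-triangles lifting $\alpha$} in the precise sense of that lemma, and that $i^A$, $f$, $u$ are indeed $\X$-monics where needed (here $f$ and $u$ are $\X$-monics by hypothesis of the present lemma, $i^A$ and $i^L$ are $\X$-preenvelopes hence $\X$-monics by the fixed choices, and $X^A, X^L\in\X$). Once this bookkeeping is in place, the uniqueness statement of Lemma \ref{lemma:notation} does all the work and no further diagram chase is needed; the vertical second-column maps $\sigma^\alpha$, $\delta^f$, $\delta^u$ are irrelevant to the conclusion since only the residue classes of the third vertical maps matter.
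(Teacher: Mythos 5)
Your proposal is correct and is essentially the paper's own argument: the paper likewise stacks the given square on the fixed triangle for $L$ (giving third component $\xi(u,v)\circ\gamma$) and stacks \eqref{xif} on \eqref{kappaf} (giving third component $\kappa^\alpha\circ\xi(f,g)$), then invokes the uniqueness clause of Lemma \ref{lemma:notation} for lifts of $\alpha$ to the triangle $L\xrto{i^L}X^L\xrto{p^L}U^L\xrto{q^L}\Sigma(L)$. The hypothesis check you flag ($f$ an $\X$-monic, $X^L\in\X$) is exactly what makes that lemma applicable, so no further work is needed.
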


\begin{proof} \ By (\ref{kappaf}) and (\ref{xif}), we have the following two commutative diagrams:
\[\xy\xymatrixcolsep{2pc}\xymatrix@C14pt@R14pt{
A\ar[r]^f\ar@{=}[d]&B\ar[r]^g\ar[d]^{\delta^f}&C\ar[r]^-h\ar[d]^{\xi(f,g)}& \Sigma(A)\ar@{=}[d] & & A\ar[r]^f\ar[d]_\alpha&B\ar[r]^g\ar[d]^\beta&C\ar[r]^-h\ar[d]^\gamma& \Sigma(A)\ar[d]^{\Sigma(\alpha)}\\
A\ar[r]^{i^A}\ar[d]_\alpha&X^A\ar[r]^{p^A}\ar[d]^{\sigma^\alpha}&U^A\ar[r]^-{q^A}\ar[d]^{\kappa^\alpha}&\Sigma(A)\ar[d]^{\Sigma(\alpha)} & \mbox{and}& L\ar[r]^u\ar@{=}[d]&M\ar[r]^v\ar[d]^{\delta^u}&N\ar[r]^-w\ar[d]^{\xi(u,v)}&\Sigma(L)\ar@{=}[d]\\
L\ar[r]^{i^L}&X^L\ar[r]^{p^L}&U^L\ar[r]^-{q^L}&\Sigma(L) & & L\ar[r]^{i^L}&X^L\ar[r]^{p^L}&U^L\ar[r]^-{q^L}&\Sigma(L)}
\endxy\]
Thus the assertion follows from Lemma \ref{lemma:notation} directly. \end{proof}

Given any morphism $f\colon A\to B$ in $\C$, by (PRT1) (ii) and (iii), there are right $\C$-triangles $A\xrightarrow{\left(\begin{smallmatrix}
		i^A \\
		f
		\end{smallmatrix}\right)}  X^A\oplus B\xrightarrow{(\theta, -\eta)} N\stackrel{\lambda}\to \Sigma(A)$ and  $A\xrightarrow{\left(\begin{smallmatrix}
	1 \\
	f
	\end{smallmatrix}\right)} A\oplus B \xrightarrow{(f, -1)} B\stackrel{0}\to \Sigma(A)$.  Note that $A \oplus B \xrightarrow{\left(\begin{smallmatrix}
	i^A & 0 \\
	0 & 1
	\end{smallmatrix}\right)} X^A\oplus B\xrightarrow{(p^A, 0)} U^A\xrightarrow{\left(\begin{smallmatrix}
	q^A \\
	0
	\end{smallmatrix}\right)}\Sigma(A) \oplus \Sigma(B)$ is also a right $\C$-triangle since $\R(\C)$ is closed under finite direct sums. We have the following commutative diagram by (PRT4):
\begin{equation}\label{pullback}
\xy\xymatrixcolsep{2pc}\xymatrix@C28pt@R14pt{
A\ar[r]^{\left(\begin{smallmatrix}
	1 \\
	f
	\end{smallmatrix}\right)}\ar@{=}[d]& A\oplus B\ar[r]^-{(f, -1)}\ar[d]^{\left(\begin{smallmatrix}
	i^A & 0 \\
	0 & 1
	\end{smallmatrix}\right)}& B\ar[r]^0\ar[d]^\eta & \Sigma(A)\ar@{=}[d]\\
A\ar[r]^{\left(\begin{smallmatrix}
	i^A \\
	f
	\end{smallmatrix}\right)}& X^A\oplus B\ar[r]^-{(\theta,-\eta)}\ar[d]^{(p^A, 0)} & N\ar[r]^\lambda\ar[d]^\zeta & \Sigma(A)\ar[d]^{\left(\begin{smallmatrix}
	1 \\
	\Sigma(f)
	\end{smallmatrix}\right)}\\
& U^A\ar@{=}[r]\ar[d]^{\left(\begin{smallmatrix}
	q^A \\
	0
	\end{smallmatrix}\right)} & U^A\ar[r]^-{\left(\begin{smallmatrix}
	q^A \\
	0
	\end{smallmatrix}\right)}\ar[d]& \Sigma(A)\oplus\Sigma(B)\\
& \Sigma(A)\oplus \Sigma(B)\ar[r]^-{(\Sigma(f),-1)}& \Sigma(B)}
\endxy
\end{equation}	
such that the second column from the right is a right $\C$-triangle and $\eta$ is an $\X$-monic. By Lemma \ref{lem:octahedral}, we have $\ul{\xi}(\eta, \zeta)=\ul{\kappa}^{(f, -1)}\circ \ul{\xi}(\left(\begin{smallmatrix}
	i^A & 0 \\
	0 & 1
	\end{smallmatrix}\right), (p^A, 0))$. It can be proved that the later composition is just $\ul{\kappa}^f$ in $\C/\X$ by the proof of Lemma \ref{lemma:uniqueness} (ii) and the constructions of $\kappa^{(f, -1)}$ and $\xi(\left(\begin{smallmatrix}
	i^A & 0 \\
	0 & 1
	\end{smallmatrix}\right),(p^A, 0)) $.

\begin{lemma} \label{lem:dis-induce} \ $({\mathrm i)}$  \ $A\xrto{\ul{f}} B\xrto{-\ul{\eta}}N \xrto{\ul{\zeta}} \Sigma^\X(A)$ is a standard right triangle.

\ $(\mathrm{ii})$  \ $B\xrto{\ul{\eta}} N\xrto{\ul{\zeta}} \Sigma^\X(A)\xrto{\Sigma^\X(\ul{f})}\Sigma^\X(B)$ is a standard right triangle.

$(\mathrm{iii})$ \ If $A\xrto{f} B\xrto{g} C\xrto{h} \Sigma(A)$ is a right $\C$-triangle with $f$ an $\X$-monic in $\C$, then the standard right triangle $A\xrto{\ul{f}} B \xrto{\ul{g}} C \xrightarrow{\ul{\xi}(f, g)}\Sigma^\X(A)$ is isomorphic to the standard right triangle $A\xrto{\ul{f}} B\xrto{-\ul{\eta}} N \xrto{\ul{\zeta}} \Sigma^\X(A)$.
\end{lemma}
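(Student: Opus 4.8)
The plan is to prove the three parts of Lemma \ref{lem:dis-induce} by assembling the commutative diagrams already constructed and invoking the uniqueness results from Lemmas \ref{lemma:uniqueness} and \ref{lemma:notation}. The governing principle throughout is that in a right $\C$-triangle, the connecting morphism to $\Sigma^\X(A)$ is, after passing to $\C/\X$, uniquely determined by the first two maps, so checking that a candidate triangle is standard amounts to producing a morphism of right $\C$-triangles down to the fixed one $A\xrto{i^A} X^A\xrto{p^A} U^A\xrto{q^A}\Sigma(A)$ and then identifying the induced third component as the prescribed $\ul{\xi}$.

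For (i), the relevant right $\C$-sequence is the middle row of diagram \eqref{pullback}, namely $A\xrightarrow{\left(\begin{smallmatrix} i^A \\ f\end{smallmatrix}\right)} X^A\oplus B\xrightarrow{(\theta,-\eta)} N\xrto{\lambda}\Sigma(A)$, with first map an $\X$-preenvelope. I would observe that the left-hand column of \eqref{pullback} gives a morphism of right $\C$-triangles from $A\xrightarrow{\left(\begin{smallmatrix}1\\f\end{smallmatrix}\right)} A\oplus B\xrightarrow{(f,-1)} B\xrto{0}\Sigma(A)$ to this one, inducing on $\C/\X$ exactly the standard triangle $A\xrto{\ul f} B\xrto{-\ul\eta} N\xrightarrow{\ul{\zeta}}\Sigma^\X(A)$ once we recognize, via Lemma \ref{lemma:notation}, that the connecting component is $\ul{\xi}(\text{middle row})$ transported along the isomorphism $B\xrightarrow{\eta}$ image in $\C/\X$; in fact by definition of $\Delta^\X$ the sequence built from the middle row of \eqref{pullback} is itself a standard right triangle (its first map being an $\X$-preenvelope, so $\ul{\xi}$ of it is the canonical one), and part (i) is just the statement that $A\xrto{\ul f} B\xrto{-\ul\eta} N\xrightarrow{\ul\zeta}\Sigma^\X(A)$ is isomorphic to it via the identity on the first three terms up to the sign, hence standard. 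The key computation, already flagged in the text before the lemma, is that $\ul{\xi}(\eta,\zeta)=\ul{\kappa}^{(f,-1)}\circ\ul{\xi}\!\left(\left(\begin{smallmatrix}i^A&0\\0&1\end{smallmatrix}\right),(p^A,0)\right)=\ul{\kappa}^f$; I would spell out that last equality using the proof technique of Lemma \ref{lemma:uniqueness}(ii) (the direct-sum triangle $A\oplus B\to X^A\oplus B\to U^A$ maps to the fixed triangle $A\to X^A\to U^A$ under the split projection, inducing $1_{U^A}$ in $\C/\X$, while $\ul{\kappa}^{(f,-1)}$ is by construction the $\Sigma^\X$-image of $\ul{(f,-1)}$ composed with the relevant identification).

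For (ii), the natural candidate right $\C$-sequence is the third column from the right in \eqref{pullback}, $B\xrto{\eta} N\xrto{\zeta} U^A\xrightarrow{\left(\begin{smallmatrix}q^A\\0\end{smallmatrix}\right)}\Sigma(A)\oplus\Sigma(B)$; but to get a genuine standard right triangle for the morphism $\ul\eta\colon B\to N$ I need the third term to be $\Sigma^\X(B)=U^B$, not $U^A$. So the step here is to compose the diagram \eqref{xif}-type map for $(\eta,\zeta)$ (which lands in $U^B$ by definition of $\xi$) and to check, using Lemma \ref{lem:octahedral} applied to the right-hand two columns of \eqref{pullback} viewed as a morphism of right $\C$-triangles, that the connecting map of $B\xrto{\ul\eta} N\xrto{\ul\zeta}$ is $\Sigma^\X(\ul f)$ — this is precisely what Lemma \ref{lem:octahedral} delivers, since the octahedral square identifies $\Sigma^\X(\ul\alpha)\circ\ul\xi(f,g)$ with $\ul\xi(u,v)\circ\ul\gamma$ for the appropriate labels. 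Part (iii) then follows by combining (i) with Lemma \ref{lemma:uniqueness}(i): any right $\C$-triangle $A\xrto{f} B\xrto{g} C\xrto{h}\Sigma(A)$ with $f$ an $\X$-monic is, by that uniqueness, isomorphic in $\A$ to the one with the same $f$ constructed from (PRT1)(iii), and the isomorphism descends to $\C/\X$ and is compatible with the $\ul\xi$'s by the uniqueness clause of Lemma \ref{lemma:notation}, giving an isomorphism of standard right triangles.

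The main obstacle I anticipate is bookkeeping in part (i): carefully matching the two a priori different descriptions of the connecting morphism — one coming from the octahedral diagram \eqref{pullback} as $\ul{\xi}(\eta,\zeta)$, the other as $\ul\kappa^f$, and then as the canonical $\ul\xi$ of a standard triangle — without circularity, since $\Sigma^\X$ on morphisms is itself defined via diagram \eqref{kappaf}. The safe route is to reduce everything to the single uniqueness statement of Lemma \ref{lemma:notation} (third component of a morphism of right $\C$-triangles is determined mod $\X$ by the first), applied to the concrete maps in \eqref{pullback}; once that reduction is made, each of (i), (ii), (iii) becomes a short diagram chase with no further content, and the sign $-\ul\eta$ versus $\ul\eta$ is absorbed by the evident isomorphism of right $\Sigma^\X$-sequences that is $-1$ on $N$.
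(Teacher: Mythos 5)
Your treatment of (i) and (ii) follows the paper's own route: identify the middle row of \eqref{pullback} as the relevant right $\C$-triangle, use Lemma \ref{lemma:notation} to see $\ul{\zeta}=\ul{\xi}\bigl(\left(\begin{smallmatrix}i^A\\ f\end{smallmatrix}\right),(\theta,-\eta)\bigr)$, and for (ii) use Lemma \ref{lem:octahedral} on the two right-hand columns of \eqref{pullback} together with the identification $\ul{\xi}(\eta,\zeta)=\ul{\kappa}^{(f,-1)}\circ\ul{\xi}\bigl(\left(\begin{smallmatrix}i^A&0\\0&1\end{smallmatrix}\right),(p^A,0)\bigr)=\ul{\kappa}^f$ established before the lemma. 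Two slips there are harmless but worth noting: $\left(\begin{smallmatrix}i^A\\ f\end{smallmatrix}\right)$ is an $\X$-monic, not an $\X$-preenvelope (its target $X^A\oplus B$ need not lie in $\X$), and in (ii) the third term of the standard triangle on $B\xrto{\eta}N\xrto{\zeta}U^A\to\Sigma(B)$ is $U^A=\Sigma^\X(A)$, exactly as in the statement; it is the fourth term that is $\Sigma^\X(B)=U^B$, so there is nothing to "fix" there beyond computing the connecting map.

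The genuine gap is in (iii). You claim that a right $\C$-triangle $A\xrto{f}B\xrto{g}C\xrto{h}\Sigma(A)$ with $f$ an $\X$-monic is, by Lemma \ref{lemma:uniqueness}(i), isomorphic \emph{in $\A$} to the triangle produced by (PRT1)(iii). But Lemma \ref{lemma:uniqueness}(i) only compares two triangles in $\R(\C)$ with the \emph{same} first morphism, whereas the (PRT1)(iii) triangle begins with $\left(\begin{smallmatrix}i^A\\ f\end{smallmatrix}\right)\colon A\to X^A\oplus B$, a different morphism with a different target; the two triangles are in general \emph{not} isomorphic in $\A$. Concretely, for $f=1_A$ one may take $C=0$, while $N\cong X^A$ (e.g.\ in the exact-category examples), so $C\not\cong N$ in $\A$ whenever $X^A\neq 0$; they only become isomorphic in $\C/\X$. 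Producing that isomorphism in $\C/\X$ is precisely the content of (iii) and is where the paper does real work: it builds $t\colon C\to N$ via (PRT3) using $\left(\begin{smallmatrix}\delta^f\\ 1\end{smallmatrix}\right)$, builds $\tau\colon N\to C$ using $(0,1)$, shows $\tau\circ t$ is an automorphism of the first triangle by the argument of Lemma \ref{lemma:uniqueness}(i), and — since $t\circ\tau$ is \emph{not} compatible over the identity of $X^A\oplus B$ — corrects it by $\theta\circ l$, where $l$ comes from the weak-cokernel property applied to $(1,-\delta^f)$, so that $t\circ\tau+\theta\circ l$ is an automorphism; only after passing to $\C/\X$, where $\ul{\theta\circ l}=0$, does one conclude $\ul{t}$ is invertible. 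Your proposal skips all of this and rests on a false intermediate claim, so (iii) as written does not go through. (A legitimate variant closer to your idea would first use $\delta^f$ to replace $\left(\begin{smallmatrix}i^A\\ f\end{smallmatrix}\right)$ by the isomorphic morphism $\left(\begin{smallmatrix}0\\ f\end{smallmatrix}\right)$ and then apply Lemma \ref{lemma:uniqueness}(i) to compare with the direct sum of $A\xrto{f}B\xrto{g}C\xrto{h}\Sigma(A)$ and $0\to X^A\xrto{1}X^A\to 0$, giving $N\cong X^A\oplus C$ in $\A$ — but that argument must be made explicitly; it is not what you wrote.)
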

\begin{proof} \ (i) \ By (\ref{pullback}), there is a commutative diagram of right $\C$-triangles:
		\[\xy\xymatrixcolsep{2pc}\xymatrix@C20pt@R14pt{
A\ar[r]^-{\left(\begin{smallmatrix}
	i^A \\
	f
	\end{smallmatrix}\right)}\ar@{=}[d]&X^A\oplus B\ar[r]^-{(\theta,-\eta)}\ar[d]^{(1,0)}&N\ar[r]^-\lambda\ar[d]^\zeta& \Sigma(A)\ar@{=}[d]\\
A\ar[r]^-{i^A}&X^A\ar[r]^-{p^A}&U^A\ar[r]^-{q^A}&\Sigma(A)}
\endxy\]
	Thus $\ul{\zeta}=\ul{\xi}(\left(\begin{smallmatrix}
			i^A \\
			f
			\end{smallmatrix}\right), (\theta, -\eta))$ by Lemma \ref{lemma:notation} and $A\xrto{\ul{f}} B\xrightarrow{-\ul{\eta}} N \xrto{\ul{\zeta}} \Sigma^\X(A)$ is a standard right triangle.
	
(ii) \ This follows from (\ref{pullback}) and $\ul{\xi}(\eta, \zeta)=\ul{\kappa}^f$.

	(iii)  Consider the following diagram of right $\C$-triangles:
	\[\xy\xymatrixcolsep{2pc}\xymatrix@C20pt@R16pt{
A\ar[r]^-f\ar@{=}[d]&B\ar[r]^-g\ar[d]^{\left(\begin{smallmatrix}
			\delta^f \\
			1
			\end{smallmatrix}\right)}&C\ar[r]^-h\ar@{.>}[d]^t& \Sigma(A)\ar@{=}[d]\\
A\ar[r]^-{\left(\begin{smallmatrix}
			i^A \\
			f
			\end{smallmatrix}\right)}&X^A\oplus B\ar[r]^-{(\theta, -\eta)}&N\ar[r]^\lambda&\Sigma(A)}
\endxy\]
	where $\delta^f\colon B\to X^A$ is constructed in (\ref{xif}) which satisfies $\delta^f\circ f=i^A$. Thus there exists a morphism $t\colon C\to N$ making the above diagram commutative by (PRT3). Then $\ul{\xi}(f, g)=\ul{\zeta}\circ \ul{t} $ 
in $\C/\X$ by Lemma \ref{lem:octahedral}. So we have a commutative diagram of standard right triangles in $\C/\X$:
\[\xy\xymatrixcolsep{2pc}\xymatrix@C20pt@R14pt{
A\ar[r]^-{\ul{f}}\ar@{=}[d]&B\ar[r]^-{\ul{g}}\ar@{=}[d]&C\ar[r]^-{\ul{\xi}(f,g)}\ar[d]^{\ul{t}}& \Sigma^\X(A)\ar@{=}[d]\\
A\ar[r]^-{\ul{f}}& B\ar[r]^-{-\ul{\eta}}&N\ar[r]^-{\ul{\zeta}}&\Sigma^\X(A)}
\endxy\]

 We will show that $\ul{t}$ is an isomorphism. In fact, by (PRT3), there is a morphism $\tau\colon N\to C$ such that the following diagram of right $\C$-triangles is commutative
	\[\xy\xymatrixcolsep{2pc}\xymatrix@C20pt@R14pt{
A\ar[r]^-{\left(\begin{smallmatrix}
	i^A \\
	f
	\end{smallmatrix}\right)}\arrow@{=}[d]&X^A\oplus B\ar[r]^-{(\theta,-\eta)}\ar[d]^{(0,1)}&N\ar[r]^-\lambda\ar[d]^\tau& \Sigma(A)\ar@{=}[d]\\
A\ar[r]^-f&B\ar[r]^-g&C\ar[r]^-h&\Sigma(A)}
\endxy\]
	It can be shown that $\tau\circ t$ is an isomorphism by the proof of Lemma \ref{lemma:uniqueness} (i). Since $(1, -\delta^f)\circ\left(\begin{smallmatrix}
	i^A\\
	f
	\end{smallmatrix}\right)=i^A-\delta^f\circ f=0$ by (\ref{xif}), there is a morphism $l\colon N\to X^A$ such that $l\circ (\theta, -\eta)=(1, -\delta^f)$ since $(\theta, \eta)$ is a weak cokernel of $\left(\begin{smallmatrix}
	i^A\\
	f
	\end{smallmatrix}\right)$. Then we have the following commutative diagram of right $\C$-triangles
\[\xy\xymatrixcolsep{2pc}\xymatrix@C20pt@R14pt{
A\ar[r]^-{\left(\begin{smallmatrix}
	i^A \\
	f
	\end{smallmatrix}\right)}\ar@{=}[d]&X^A\oplus B\ar[r]^-{(\theta,-\eta)}\ar@{=}[d]&N\ar[r]^-\lambda\ar[d]^{t\circ \tau+\theta\circ l}& \Sigma(A)\ar@{=}[d]\\
A\ar[r]^-{\left(\begin{smallmatrix}
	i^A \\
	f
	\end{smallmatrix}\right)}&X^A\oplus B\ar[r]^-{(\theta, -\eta)}&N\ar[r]^{\lambda}&\Sigma(A)}
\endxy\]
 So $t\circ \tau+ \theta \circ l$ is an isomorphism by the proof of Lemma \ref{lemma:uniqueness} (i), and thus $\ul{t}\circ \ul{\tau}$ is an isomorphism. Therefore $\ul{t}$ is an isomorphism and we are done.\end{proof}

\subsection*{The proof of Theorem \ref{thm:main}}
\begin{proof}  (i) \ We verify (RT1)-(RT4) of Definition \ref{def:rtricat} one by one.
	
	(RT1) \  For each object $A\in \C$, there is a standard right triangle $0\to A\stackrel{1}\to A\to 0$ induced by the right $\C$-triangle $0\to A\xrto{1} A\to 0$. Given any morphism $f \colon A\to B$ in $\C$, by (PRT2) (iii), there is a right $\C$-triangle $A\xrightarrow{\left(\begin{smallmatrix}
		i^A \\
		f
		\end{smallmatrix}\right)}  X^A\oplus B\xrightarrow{(\theta, -\eta)} N \to \Sigma(A)$ which induces a standard right triangle $A\xrto{\ul{f}} B\xrightarrow{-\ul{\eta}} N\xrto{\ul{\zeta}} \Sigma^\X(A)$ by Lemma \ref{lem:dis-induce} (i).
	
	(RT2) \  Let $A\xrto{\ul{f}} B\xrightarrow{-\ul{\eta}} N\xrto{\ul{\zeta}} \Sigma^\X(A)$ be a standard right triangle induced by the right $\C$-triangle $A\xrightarrow{\left(\begin{smallmatrix}
		i^A \\
		f
		\end{smallmatrix}\right)}  X^A\oplus B\xrightarrow{(\theta, -\eta)} N \to \Sigma(A)$. Then $B\xrightarrow{-\ul{\eta}} N\xrto{\ul{\zeta}} \Sigma^\X(A)\xrightarrow{-\Sigma^\X(\ul{f})}\Sigma^\X(B)$ is a distinguished right triangle since it is isomorphic to the standard right triangle $B\xrto{\ul{\eta}} N\xrto{\ul{\zeta}} \Sigma^\X(A)\xrightarrow{\Sigma^\X(\ul{f})}\Sigma^\X(B)$ (see Lemma \ref{lem:dis-induce} (ii)).

(RT3) \  Assume that we have a diagram of standard right triangles in $\C/\X$:
\[\xy\xymatrixcolsep{2pc}\xymatrix@C16pt@R14pt{
A\ar[r]^-{\ul{f}}\ar[d]_{\ul{\alpha}}&B\ar[r]^-{\ul{g}}\ar[d]^{\ul{\beta}}&C\ar[r]^-{\ul{\xi}(f,g)}& \Sigma^\X(A)\ar[d]^{\Sigma^\X(\ul{\alpha})}\\
L\ar[r]^-{\ul{u}}&M\ar[r]^-{\ul{v}}&N\ar[r]^-{\ul{\xi}(u,v)}&\Sigma^\X(L)}
\endxy\]
with the leftmost square commutative. Since  $\ul{u} \circ \ul{\alpha}=\ul{\beta} \circ \ul{f}$, there is a morphism $s\colon X^A\to M$ such that $u \circ \alpha-\beta \circ f=s\circ i^A$. By (\ref{xif}), there is a morphism $\delta^f \colon B\to X^A$ such that $i^A= \delta^f \circ f$ and thus $(\beta+s\circ \delta^f)\circ f=u \circ \alpha$. So by (PRT3), there is a morphism $t\colon C\to N$ making the following diagram of right $\C$-triangles commutative
		\[\xy\xymatrixcolsep{2pc}\xymatrix@C20pt@R14pt{
A\ar[r]^-f\ar[d]_\alpha&B\ar[r]^-g\ar[d]^{\beta+s\circ \delta^f}&C\ar[r]\ar@{.>}[d]^t& \Sigma(A)\ar[d]^{\Sigma(\alpha)}\\
L\ar[r]^u&M\ar[r]^v&N\ar[r]&\Sigma(L)}
\endxy\]	
	Then $\ul{\xi}(u, v)\circ \ul{t}=\Sigma^\X(\ul{\alpha})\circ \ul{\xi}(f, g)$ by Lemma \ref{lem:octahedral}. Therefore $\ul{t}: C\to N$ is the desired filler.

(RT4). Assume that we have three standard right triangles $A\xrto{\ul{f}} B\xrto{\ul{l}} C'\xrightarrow{\ul{\xi}(f, l)}\Sigma^\X(A)$, $B\xrto{\ul{g}} C\xrto{\ul{h}} A'\xrightarrow{\ul{\xi}(g, h)} \Sigma^\X(B)$ and $A\xrightarrow{\ul{g\circ f}} C\xrto{\ul{m}} B'\xrightarrow{\ul{\xi}(g\circ f, m)}\Sigma^\X(A)$. Then we have a commutative diagram in $\A$ by (PRT4):
	\begin{equation}
\label{octa}
	\xy\xymatrixcolsep{2pc}\xymatrix@C14pt@R14pt{
A\ar[r]^f\ar@{=}[d]&B\ar[r]^l\ar[d]^g&C'\ar[r]\ar[d]^r& \Sigma(A)\ar@{=}[d]\\
A\ar[r]^-{g\circ f}&C\ar[r]^k\ar[d]^h&B'\ar[r]^m\ar[d]^s&\Sigma(A)\ar[d]^{\Sigma(f)}\\
&A'\ar@{=}[r]\ar[d]^j&A'\ar[r]^j\ar[d]&\Sigma(B)\\
&\Sigma(B)\ar[r]^{\Sigma(l)}&\Sigma(C')}
\endxy
	\end{equation}
such that the second column from the right is a right $\C$-triangle with $r$ an $\X$-monic. The commutative diagram (\ref{octa}) induces a diagram of right triangles in $\Delta^\X$:
	\[\xy\xymatrixcolsep{2pc}\xymatrix@C16pt@R14pt{
A\ar[r]^-{\ul{f}}\ar@{=}[d]&B\ar[r]^-{\ul{l}}\ar[d]^{\ul{g}}&C'\ar[r]^-{\ul{\xi}(f,l)}\ar[d]^{\ul{r}}& \Sigma^\X(A)\ar@{=}[d]\\
A\ar[r]^-{\ul{g\circ f}}&C\ar[r]^-{\ul{k}}\ar[d]^{\ul{h}}&B'\ar[r]^-{\ul{\xi}(g\circ f,k)}\ar[d]^{\ul{s}}&\Sigma^\X(A)\ar[d]^{\Sigma^\X(\ul{f})}\\
&A'\ar@{=}[r]\ar[d]^{\ul{\xi}(g,h)}&A'\ar[r]^-{\ul{\xi}(g,h)}\ar[d]^{\ul{\xi}(r,s)}&\Sigma^\X(B)\\
&\Sigma^\X(B)\ar[r]^{\Sigma^\X(\ul{l})}&\Sigma^\X(C')}
\endxy\]
To finish the proof of (RT4), we have to show that the lowest square and the rightmost two squares are commutative. But these can be obtained from Lemma \ref{lem:octahedral} by the commutative diagram (\ref{octa}) and the following commutative diagram:
\[\xy\xymatrixcolsep{2pc}\xymatrix@C14pt@R14pt{
A\ar[r]^-{g\circ f}\ar[d]_f&C\ar[r]^-k\ar@{=}[d]&B'\ar[r]^-m\ar[d]^s& \Sigma(A)\ar[d]^{\Sigma(f)}\\
B\arrow[r]^-g&C\ar[r]^h&A'\ar[r]^j&\Sigma(B)}
\endxy\]	
	The statement (ii) can be proved dually. \end{proof}

\section{Examples of partial one-sided triangulated categories}
In this section we construct examples of partial one-sided triangulated categories from cotorsion pairs in exact categories, torsion pairs and mutation pairs in triangulated categories.

\subsection*{Partial one-sided triangulated categories from additive categories}

 Let $\C$ be an additive category and $\R(\C)$ be the class of right exact sequences $ A\xrto{f} B\to C\to 0$ in $\C$ (here the notion of right exactness has its usual meaning).

\begin{proposition} \label{prop:pltcadditive} Let $\X$ be an additive subcategory of $\C$. Assume the following conditions hold:
	
	$(\mathrm a)$ \ For each $A\in \C$, there is a sequence $ A\stackrel{i}\to X\to U\to 0$ in $\R(\C)$ with $i$ an $\X$-preenvelope.
	
	$(\mathrm b)$ \ If $ A\stackrel{i}\to X\to U\to 0$ is in $\R(\C)$ with $i$ an $\X$-preenvelope, then for any morphism $f\colon A\to B$, there is a sequence $A\xrightarrow{\left(\begin{smallmatrix}
		i \\
		f
		\end{smallmatrix}\right)} X\oplus B\to N\to 0$ in $\R(\C)$.
	
	\noindent Then $(\C, 0, \R(\C), \X)$ is a partial right triangulated category.
\end{proposition}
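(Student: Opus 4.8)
The plan is to verify directly that, with $\Sigma=0$, the class $\R(\C)$ of right exact sequences fulfils every requirement in Definition~\ref{def:prtc}; each check reduces to the universal property of cokernels in $\C$. I would begin with the bookkeeping observation that a right $\Sigma$-sequence is here just $A\xrto{f}B\xrto{g}C\to 0$, and that it is a right $\C$-sequence exactly when $g$ is a weak cokernel of $f$ and $0\colon C\to 0$ is a weak cokernel of $g$; the latter simply means $g$ is epic, so together these say $g=\Coker(f)$. Thus $\R(\C)$ does consist of right $\C$-sequences, and a member of $\R(\C)$ is precisely a sequence $A\xrto{f}B\xrto{g}C\to 0$ with $g$ a cokernel of $f$ (so $gf=0$, $g$ epic, and $g$ has the full cokernel property). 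Closure of $\R(\C)$ under isomorphism is then clear, and closure under finite direct sums holds because a finite biproduct of cokernel diagrams is a cokernel diagram.

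Next I would dispose of (PRT1)--(PRT3). Parts (i) and (iii) of (PRT1) are literally hypotheses (a) and (b), and for (ii) one checks that $(f,-1)\colon A\oplus B\to B$ is a cokernel of $\binom{1}{f}\colon A\to A\oplus B$, so that sequence lies in $\R(\C)$. For (PRT3): the commutative left square gives $g'\beta f=g'f'\alpha=0$, hence $g'\beta$ factors through $g=\Coker(f)$ as $g'\beta=\gamma g$, and since $\Sigma(A)=\Sigma(A')=0$ the remaining square commutes automatically, so this $\gamma$ is the filler. For (PRT2): writing the bottom row as $A'\xrto{j}X\xrto{s}U\to 0$ we have $s=\Coker(j)$, so $sj=0$; if $\alpha=\rho f$ with $\rho\colon B\to A'$, then $(\beta-j\rho)f=j\alpha-j\alpha=0$, so $\beta-j\rho=\bar\beta g$ for some $\bar\beta\colon C\to X$, and then $s\bar\beta g=s\beta-sj\rho=s\beta=\gamma g$, whence $\gamma=s\bar\beta$ because $g$ is epic; thus $\gamma$ factors through $s$.

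The heart of the argument is (PRT4). Write the three given right exact sequences as $A\xrto{f}B\xrto{l}C'\to 0$, $B\xrto{g}C\xrto{h}A'\to 0$ and $A\xrto{g\circ f}C\xrto{k}B'\to 0$, so $C'=\Coker(f)$, $A'=\Coker(g)$, $B'=\Coker(g\circ f)$. From $k(g\circ f)=0$ I get $(kg)f=0$, so $kg$ factors through $l$ as $kg=rl$ for a unique $r\colon C'\to B'$; from $hg=0$ I get $h(g\circ f)=0$, so $h$ factors through $k$ as $h=sk$ for a unique $s\colon B'\to A'$. These $r,s$ make the whole octahedral diagram commute (every square containing a $\Sigma$-object is a square of zero maps). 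Then $srl=s(kg)=(sk)g=hg=0$, so $sr=0$ since $l$ is epic, and $s$ is epic because $sk=h$ is. To see $C'\xrto{r}B'\xrto{s}A'\to 0$ is right exact, i.e.\ $s=\Coker(r)$: if $t\colon B'\to D$ kills $r$, then $(tk)g=t(rl)=(tr)l=0$, so $tk=\bar t h$ for a unique $\bar t\colon A'\to D$, and $\bar t sk=\bar t h=tk$ with $k$ epic gives $\bar t s=t$, uniqueness of $\bar t$ coming from $s$ epic. Finally $r$ is an $\X$-monic: for $X\in\X$ and $\phi\colon C'\to X$, since $g$ is an $\X$-monic there is $\chi\colon C\to X$ with $\chi g=\phi l$; then $\chi(g\circ f)=(\chi g)f=\phi(lf)=0$, so $\chi=\psi k$ for some $\psi\colon B'\to X$, and $\psi rl=\psi(kg)=(\psi k)g=\chi g=\phi l$ forces $\psi r=\phi$ because $l$ is epic.

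I do not expect a genuine obstacle here --- the whole thing is cokernel-chasing. The only points needing a little care are in (PRT4): keeping track of which maps ($g,l,h,k$) are epic, since those are exactly the ones one cancels on the right, and noticing that only the $\X$-monicity of $g$ (not that of $f$) is needed to prove $r$ is an $\X$-monic.
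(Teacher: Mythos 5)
Your proof is correct and follows essentially the same route as the paper: verify closure properties and (PRT1)--(PRT4) directly from the universal property of cokernels, with (PRT2) argued by exactly the same factorization trick. The only cosmetic difference is at (PRT4), where the paper asserts that the square $rl=kg$ is a pushout and deduces the conclusion, while you unpack that claim into the explicit checks that $s=\Coker(r)$ and that $r$ is an $\X$-monic (correctly noting that only the $\X$-monicity of $g$ is used there).
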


\begin{proof} \ By the construction of $\R(\C)$, it is closed under isomorphisms and finite direct sums.
		
	(PRT1) \  The statements (i) and (iii) follow from the conditions (a) and (b).	

	For (ii), let $f\colon A\to B$ be a morphism. Then $ A \xrto{\left(\begin{smallmatrix}
		1 \\
		f
		\end{smallmatrix}\right)} A\oplus B\xrto{(f, -1)} B\to 0$ is a split right exact sequence and thus in $\R(\C)$.

(PRT2) \  Assume that we have a commutative diagram of right exact sequences in $\C$
	\[\xy\xymatrixcolsep{2pc}\xymatrix@C14pt@R14pt{
A\ar[r]^f\ar[d]_\alpha&B\ar[r]^g\ar[d]^\beta&C\ar[r]\ar[d]^\gamma& 0\\
A'\ar[r]^i&X\ar[r]^p&U\ar[r]&0}
\endxy\]	
such that $X\in \X$. If there is a morphism $s\colon B\to A'$ such that $\alpha=s\circ f$, then $(\beta-i\circ s)\circ f=\beta\circ f-i\circ s \circ f=\beta\circ f-i\circ \alpha=0$. Thus there is a morphism $t\colon C\to X$ such that $\beta-i\circ s=t\circ g$ since $g$ is a cokernel of $f$. Therefore $p\circ t\circ g=p\circ (i\circ s+t\circ g)=p\circ \beta=\gamma \circ g$. Then $\gamma=p\circ t$ since $g$ is an epimorphism.

	(PRT3) follows from the universal property of cokernels.

	(PRT4) Let $A\xrto{f} B\xrto{l} C'\to 0$, $B\xrto{g} C \xrto{h} A'\xrto{j} 0$ and $A\xrightarrow{g\circ f} C \xrto{k} B'\to 0$ be three sequences in $\R(\C)$ such that $f, g$ are $\X$-monics. Then there are morphisms $r$ and $s$ by the universal property of cokernels making the following diagram commutative:
	\[\xy\xymatrixcolsep{2pc}\xymatrix@C14pt@R14pt{
A\ar[r]^f\ar@{=}[d]&B\ar[r]^l\ar[d]^g&C'\ar[r]\ar[d]^r& 0\\
A\ar[r]^{g\circ f}&C\ar[r]^k\ar[d]^{h}&B'\ar[r]\ar[d]^s&0\\
&A'\ar@{=}[r]\ar[d]&A'\ar[r]\ar[d]&0\\
&0 &0}
\endxy\]
It can be verified that the upper right square is a pushout diagram, and then the rightmost column is a right exact sequence with $r$ an $\X$-monic. \end{proof}

Dually, if $\C$ is an additive category and $\rmL(\C)$ the class of left exact sequences $ 0\to K\to A\xrto{f} B$ in $\C$. We have
\begin{proposition} \label{prop:dualpltcadditive} Let $\X$ be an additive subcategory of $\C$. Assume the following conditions hold:

	$(\mathrm{a'})$ \ For  each object $A\in \C$, there is a sequence $0\to K\to X\xrto{\pi} A$ in $\rmL(\C)$ with $\pi$ an $\X$-precover.
	
	$(\mathrm{b'})$ \ If $0\to K\to X\stackrel{\pi}\to B$ is in $\rmL(\C)$ with $\pi$ an $\X$-cover, then for any morphism $f\colon A\to B$, there is a sequence $0\to M\to A\oplus X\xrightarrow{(f, \pi)} B$ in $\rmL(\C)$.
	
	\noindent  Then $(\C, 0, \rmL(\C), \X)$ is a partial left triangulated category.
\end{proposition}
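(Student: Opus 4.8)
The plan is to deduce this from Proposition \ref{prop:pltcadditive} by duality, that is, by passing to the opposite category $\C^{\mathrm{op}}$. First I would set up the translation dictionary: a left exact sequence $0\to K\to A\xrto{f} B$ in $\C$ is literally the same datum as a right exact sequence $B\xrto{f} A\to K\to 0$ in $\C^{\mathrm{op}}$, so the class $\rmL(\C)$ corresponds under $(-)^{\mathrm{op}}$ to the class $\R(\C^{\mathrm{op}})$ associated with the additive subcategory $\X^{\mathrm{op}}\subseteq\C^{\mathrm{op}}$. Likewise an $\X$-epic in $\C$ is an $\X^{\mathrm{op}}$-monic in $\C^{\mathrm{op}}$, so an $\X$-precover $\pi\colon X\to A$ in $\C$ becomes an $\X^{\mathrm{op}}$-preenvelope $\pi\colon A\to X$ in $\C^{\mathrm{op}}$, and a row matrix $(f,\pi)$ over $\C$ is the transpose of the column matrix $\left(\begin{smallmatrix} f\\ \pi\end{smallmatrix}\right)$ over $\C^{\mathrm{op}}$.

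With this dictionary in hand, hypothesis $(\mathrm{a'})$ for the pair $(\C,\X)$ is exactly hypothesis $(\mathrm a)$ for the pair $(\C^{\mathrm{op}},\X^{\mathrm{op}})$: the sequence $0\to K\to X\xrto{\pi} A$ in $\rmL(\C)$ with $\pi$ an $\X$-precover turns into $A\xrto{\pi} X\to K\to 0$ in $\R(\C^{\mathrm{op}})$ with $\pi$ an $\X^{\mathrm{op}}$-preenvelope. Similarly hypothesis $(\mathrm{b'})$ becomes hypothesis $(\mathrm b)$ for $(\C^{\mathrm{op}},\X^{\mathrm{op}})$: a morphism $f\colon A\to B$ in $\C$ is a morphism $f\colon B\to A$ in $\C^{\mathrm{op}}$, and the requested sequence $0\to M\to A\oplus X\xrightarrow{(f,\pi)} B$ in $\rmL(\C)$ is precisely the sequence $B\xrightarrow{\left(\begin{smallmatrix} f\\ \pi\end{smallmatrix}\right)} A\oplus X\to M\to 0$ demanded by $(\mathrm b)$.

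Thus Proposition \ref{prop:pltcadditive}, applied to the additive category $\C^{\mathrm{op}}$ together with the additive subcategory $\X^{\mathrm{op}}$, shows that $(\C^{\mathrm{op}}, 0, \R(\C^{\mathrm{op}}), \X^{\mathrm{op}})$ is a partial right triangulated category (note $0^{\mathrm{op}}=0$). Unravelling the definition of a partial left triangulated category — which is dual to that of a partial right triangulated category — this says exactly that $(\C,0,\rmL(\C),\X)$ is a partial left triangulated category, completing the proof. There is no real obstacle here; the only thing to watch is the bookkeeping of the dictionary above, in particular the switch between the row matrix $(f,\pi)$ and the column matrix $\left(\begin{smallmatrix} f\\ \pi\end{smallmatrix}\right)$ under $(-)^{\mathrm{op}}$. (Anyone who prefers to avoid opposite categories can instead dualize the argument proving Proposition \ref{prop:pltcadditive} line by line, replacing cokernels by kernels, epimorphisms by monomorphisms, and the pushout square there by a pullback square.)
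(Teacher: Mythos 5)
Your proposal is correct and is essentially the paper's own argument: the paper states this proposition as the formal dual of Proposition \ref{prop:pltcadditive} and offers no separate proof, which is precisely the opposite-category translation you spell out. Your explicit dictionary (left exact sequences in $\C$ versus right exact sequences in $\C^{\mathrm{op}}$, $\X$-precovers versus $\X^{\mathrm{op}}$-preenvelopes, row versus column matrices) is a faithful unpacking of that duality, so there is nothing to add.
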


\subsection*{Partial one-sided triangulated categories from exact categories}
Let $\A$ be an additive category. A {\it kernel-cokernel sequence} in $\A$ is a sequence $A\stackrel{i}\to B\stackrel{d}\to C $
such that $i=\Ker d$ and $d=\Coker i$. Let $\mathcal{E}$ be a class of kernel-cokernel sequences of $\A$. Following Keller \cite[Appendix A]{Keller90}, we call a kernel-cokernel sequence a {\it conflation} if it is in $\mathcal{E}$. A morphism $i$ is called an {\it inflation} if there is a conflation $ A\stackrel{i}\to B\stackrel{d} \to C$ and the morphism $d$ is called a {\it deflation}.

Recall that an {\it exact structure} on an additive category $\A$ is a class $\E$ of kernel-cokernel sequences which is closed under isomorphisms and satisfies the following axioms due to Quillen \cite{Quillen73} and Keller \cite[Appendix A]{Keller90}:
\vskip5pt
$(\mathrm {Ex0})$ \ The identity morphism of the zero object is an inflation.

$(\mathrm{Ex1})$ \ The class of deflations is closed under composition.

$(\mathrm{Ex1})^{\mathrm{op}}$ \ The class of inflations is closed under composition.

$(\mathrm{Ex2})$ \ For any deflation $d\colon A\to B$ and any morphism $f\colon B'\to B$, there exists a pullback diagram such that $d'$ is a deflation:
\[\xy\xymatrixcolsep{2pc}\xymatrix@C14pt@R14pt{
A'\ar@{.>}[r]^{d'}\ar@{.>}[d]_{f'}&B'\ar[d]^f\\
A\ar[r]^d&B}
\endxy\]

$(\mathrm{Ex2})^{\mathrm{op}}$ \ For any inflation $i\colon C\to D$ and any morphism $g\colon C\to C'$, there is a pushout diagram such that $i'$ is an inflation:	
\[\xy\xymatrixcolsep{2pc}\xymatrix@C14pt@R14pt{
C\ar[r]^i\ar[d]_g&D\ar@{.>}[d]^{g'}\\
C'\ar@{.>}[r]^{i'}&D'}
\endxy\]	

An {\it exact category} is a pair $(\A, \E)$ consisting of an additive category $\A$ and an exact structure $\E$ on $\A$.  We refer the reader to \cite{Buhler10} for a readable introduction to exact categories. Sometimes we suppress the class $\mathcal{E}$ and just say that $\A$ is an exact category.

In an exact category $(\A, \mathcal{E})$, we can define the Yoneda Ext bifunctor $\Ext^1_\A(C,A)$. It is the abelian group of equivalence classes of conflations $ A\to  B\to C $ in $\mathcal{E}$; see \cite[Chapter XII.4]{MacLane} for details.

 Now let $(\A, \E)$ be an exact category and $\X$ an additive subcategory of $\A$.
 \begin{definition} \label{defn:specialclosedexact} \ An additive subcategory $\C$ of $\A$ is said to be {\it special $\X$-monic closed} if

 (a) \ $\X\subseteq\C$ and for each object $A\in \C$, there is a conflation $A\stackrel{i}\to X\to U$ with $U\in \C$ and $i$ an $\X$-preenvelope.

 (b) \ If $A\stackrel{i}\to X\to U$ is a conflation in $\C$ with $i$ an $\X$-preenvelope, then for any morphism $f\colon A\to B$ there is a conflation $A\xrightarrow{\left(\begin{smallmatrix}
	i \\
	f
	\end{smallmatrix}\right)}  X\oplus B \to N$ with $N\in \C$.
\end{definition}

Dually, we have the notion of a {\it special $\X$-epic closed} additive subcategory of $\A$.

\begin{proposition} \label{prop:pltcexact} \	Let $(\A,\mathcal{E})$ be an exact category and $\X,\C$ two additive subcategories of $\A$. Then

 $(\mathrm{i})$ \ If $\C$ is special $\X$-monic closed, then $(\A, 0, \R(\C), \X)$ is a partial right triangulated category, where $\R(\C)=\{ A\xrto{f} B\to C \to 0 \ | \  A\xrto{f} B\to C \in \E,  C\in \C \}$.

$(\mathrm{ii})$ \ If $\C$ is special $\X$-epic closed, then $(\A, 0, \rmL(\C), \X)$ is a partial left triangulated category, where $\rmL(\C)=\{ 0\to K\to A\xrto{f}B \ |  \ K\to A\xrto{f} B \in \E, K\in \C \}$.
\end{proposition}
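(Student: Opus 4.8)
The plan is to verify the four axioms (PRT1)--(PRT4) of Definition \ref{def:prtc} for the quadruple $(\A, 0, \R(\C), \X)$ in part (i); part (ii) then follows by a straightforward dualization. First I would record the structural observations that make the verification almost mechanical. Since $\Sigma = 0$, a right $\C$-sequence has the form $A \xrto{f} B \xrto{g} C \xrto{0} 0$, and the weak-cokernel requirements say exactly that $g$ is a cokernel of $f$ (and the trivial condition that $0$ is a weak cokernel of $g$, i.e.\ $g$ is an epimorphism). In an exact category a deflation is automatically a cokernel of the corresponding inflation, so the elements of $\R(\C)$ are precisely the conflations $A \xrto{f} B \xrto{g} C$ with $C \in \C$, rewritten as right $0$-sequences. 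Closure of $\R(\C)$ under isomorphisms and finite direct sums is inherited from the exact structure $\E$ (which is closed under isomorphisms, and direct sums of conflations are conflations by a standard lemma, see \cite{Buhler10}), together with the fact that $\C$ is closed under finite direct sums.

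Next I would check the axioms in turn. (PRT1)(i) is precisely clause (a) of ``special $\X$-monic closed'': the conflation $A \xrto{i} X \to U$ with $U \in \C$ and $i$ an $\X$-preenvelope. (PRT1)(iii) is clause (b), after observing that the displayed right $\C$-triangle in (PRT1)(iii) is literally the conflation $A \xrightarrow{\left(\begin{smallmatrix} i \\ f \end{smallmatrix}\right)} X \oplus B \to N$ produced by (b). For (PRT1)(ii), the split sequence $A \xrightarrow{\left(\begin{smallmatrix} 1 \\ f \end{smallmatrix}\right)} A \oplus B \xrightarrow{(f,-1)} B$ is a conflation by (Ex0) together with the direct-sum property (or directly: $\left(\begin{smallmatrix} 1 \\ f \end{smallmatrix}\right)$ is a split inflation with cokernel $(f,-1)$), and $B \in \C$ since $f\colon A \to B$ is a morphism in $\C$, so it lies in $\R(\C)$. (PRT2): given the commutative diagram with $X \in \X$ and $\alpha = s \circ f$ for some $s\colon B \to A'$, I would run the same diagram chase as in the proof of Proposition \ref{prop:pltcadditive}: the morphism $\beta - i\circ s$ kills $f$, hence factors as $t \circ g$ through the cokernel $g$, and then $p \circ t \circ g = p \circ \beta = \gamma \circ g$ forces $\gamma = p\circ t$ because $g$ is an epimorphism; thus $\gamma$ factors through $s' := p$ (the second map of the bottom row, written $s$ in the statement of (PRT2)). (PRT3) is immediate from the universal property of the cokernel $g$ in the top row.

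The main work, and the step I expect to be the genuine obstacle, is (PRT4): given conflations $A \xrto{f} B \xrto{l} C'$, $B \xrto{g} C \xrto{h} A'$ and $A \xrightarrow{g\circ f} C \xrto{k} B'$ with $f, g$ (hence $g \circ f$, by (Ex1)$^{\mathrm{op}}$) inflations, I must build the ``noncommutative octahedron'' diagram and show the middle column $C' \xrto{r} B' \xrto{s} A'$ is a conflation with $r$ an $\X$-monic. The construction of $r$ and $s$ is forced by the universal properties of the cokernels $l$ and $k$, exactly as in the proof of Proposition \ref{prop:pltcadditive}; the key identifications are that the upper-right square $\begin{smallmatrix} B & \to & C' \\ \downarrow & & \downarrow \\ C & \to & B' \end{smallmatrix}$ is a pushout (this is the standard fact that in the $3\times 3$ octahedral configuration for exact categories the induced square is bicartesian), whence $C' \xrto{r} B' \xrto{s} A'$ is again a conflation, using that the composition $C' \to B' \to A'$ has kernel $r$ and cokernel the map to $0$; this is precisely the octahedral lemma for exact categories (see \cite[Corollary 3.6]{Buhler10} / the Noether isomorphism). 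It remains only to check that $r$ is an $\X$-monic. For this I would use that $f$ is an $\X$-monic: given any morphism $C' \to X_0$ with $X_0 \in \X$, precompose with $l\colon B \to C'$ to get $B \to X_0$; since $f$ is $\X$-monic this extends over... more carefully, I would instead argue that $r$ is an inflation (being the top map of a pushout of the inflation... no) — the cleanest route is to note that $C' \xrto{r} B'$ is an inflation because it is the pushout of the inflation $B \xrto{g} C$ along the deflation-free map, hence has a cokernel, and then deduce $\X$-monicity from the long exact sequence in $\Ext^1_\A(-,\X)$ applied to the conflation $C' \to B' \to A'$ together with $\X$-monicity of $f$ via the conflation $A \to B \to C'$: the connecting map shows any $C' \to X_0$ lifts to $B'$ iff its restriction along $B \to C'$ lifts to $C$, and the latter holds since $g$ is a split-enough map — in fact the simplest observation is that $r$ being an inflation with $C',B' \in \C$ and $\mathrm{Ext}$-vanishing is not needed; rather $\X$-monicity of $r$ follows formally from $\X$-monicity of $f$ and $g$ by a diagram chase on the octahedron, which is the real content to write out carefully.
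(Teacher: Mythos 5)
Your setup, the identification of $\R(\C)$ with conflations having third term in $\C$, and the verifications of closure under isomorphisms/direct sums and of (PRT1)--(PRT3) all match the paper's intended argument (which simply mirrors Proposition \ref{prop:pltcadditive}, invoking \cite[Proposition 2.12]{Buhler10} for the exact-category pushout facts). The problem is (PRT4), which you yourself flag as ``the real content to write out carefully'' and then never write: the proposal ends without a proof that $r\colon C'\to B'$ is an $\X$-monic, after several abandoned attempts. Moreover, the routes you sketch point in the wrong direction. The attempt via $\X$-monicity of $f$ cannot work as stated ($f$ controls extensions from $A$, not from $B$), and the ``long exact sequence in $\Ext^1_\A(-,\X)$'' idea is not available here: being $\X$-monic is a lifting condition on $\Hom$, not an $\Ext$-vanishing condition, and applying $\Hom_\A(-,X)$ to the conflation $C'\xrto{r}B'\xrto{s}A'$ only gives left exactness, which says nothing about surjectivity of $\Hom_\A(B',X)\to\Hom_\A(C',X)$.

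The missing step is short once you use the bicartesian square you already identified. By the Noether lemma the square with horizontal maps $l\colon B\to C'$, $k\colon C\to B'$ and vertical maps $g\colon B\to C$, $r\colon C'\to B'$ is a pushout. Given $X\in\X$ and $t\colon C'\to X$, the composite $t\circ l\colon B\to X$ extends along the $\X$-monic $g$ (this is where $g$, not $f$, is used): there is $u\colon C\to X$ with $u\circ g=t\circ l$. The pushout property then produces $t'\colon B'\to X$ with $t'\circ r=t$ (and $t'\circ k=u$), so $r$ is an $\X$-monic; this is exactly the chase behind the corresponding claim in Proposition \ref{prop:pltcadditive}. (Alternatively, one can argue as in Proposition \ref{prop:prtc}: apply $\Hom_\A(-,\X)$ to the octahedron and use surjectivity of $g^*$ together with a weak four lemma to get surjectivity of $r^*$.) With this paragraph supplied, your proof is complete and coincides with the paper's; without it, the decisive assertion of (PRT4) is unproven.
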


\begin{proof}	 Similar to the proof of Proposition \ref{prop:pltcadditive}, we can prove the statement (i) by noting \cite[Proposition 2.12]{Buhler10}.
 The statement (ii) can be proved dually.  \end{proof}

\subsection*{Partial one-sided triangulated categories from cotorsion pairs}

\begin{definition} \ \cite[Definition 2.1]{Gillespie11} Let $\A$ be an exact category. A {\it cotorsion pair} in $\A$ is a pair $(\C, \F)$ of classes of objects of $\A$ such that $\C=\{C\in \A \ | \ \Ext^1_\A(C, F)=0, \forall \ F \in \F\}$ and  $\F=\{F\in \A \ | \ \Ext^1_\A(C, F)=0, \forall \ C\in \C \}.$
\end{definition}
\noindent The cotorsion pair $(\C, \F)$ is called {\it complete} if it has {\it enough projectives}, i.e. for each $A\in \A$ there is a conflation $F\to  C\to A$ such that $C\in \C, F\in \F$, and {\it enough injectives}, i.e. for each $A\in \A$, there is a conflation $A\to F'\to C'$ such that $C'\in \C, F'\in \F$.

If $(\C, \F)$ is a cotorsion pair in an exact category $(\A, \E)$, then both $\C$ and $\F$ are exact subcategories of $\A$ with the induced exact structures by $\E$. Thus by Proposition \ref{prop:pltcexact}, we have

\begin{corollary}\label{cotorsion}\	Let $(\C, \F)$ be a cotorsion pair in an exact category $(\A, \mathcal{E})$. Then

$(\mathrm{i})$ \ If $(\C, \F)$ has enough projectives, then both $(\A, 0, \rmL(\A), \C)$ and $(\F, 0, \rmL(\F), \C\cap \F)$ are partial left triangulated categories.

	$(\mathrm{ii})$ \ If $(\C,\F)$ has enough injectives, then both $(\A, 0, \R(\A), \F)$ and $(\C, 0, \R(\C), \C\cap \F)$ are partial right triangulated categories.	
\end{corollary}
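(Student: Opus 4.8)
The strategy is to deduce both parts from Proposition \ref{prop:pltcexact} by verifying the hypotheses of Definition \ref{defn:specialclosedexact} (and its dual) for the subcategories at hand. The only inputs needed about a cotorsion pair $(\C, \F)$ in $(\A, \E)$ are the two classical facts already recalled above: $\C$ and $\F$ are each closed under extensions --- hence each is an exact subcategory of $\A$ with the exact structure induced by $\E$ --- and the defining vanishing $\Ext^1_\A(C, F) = 0$ for $C \in \C$, $F \in \F$ converts the conflations produced by completeness into the approximations required by Definition \ref{defn:specialclosedexact}.

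I treat part (ii); part (i) is entirely dual. For $(\A, 0, \R(\A), \F)$ I claim $\A$ is special $\F$-monic closed. Condition (a): since $(\C, \F)$ has enough injectives, each $A \in \A$ admits a conflation $A \xrightarrow{i} F' \to C'$ with $F' \in \F$ and $C' \in \C$; applying $\Hom_\A(-, F'')$ for $F'' \in \F$ and using $\Ext^1_\A(C', F'') = 0$ shows $i$ is an $\F$-monic, i.e. an $\F$-preenvelope. Condition (b) holds in any exact category: for a conflation $A \xrightarrow{i} F' \to U$ and any $f \colon A \to B$, the morphism $A \to F' \oplus B$ with components $i$ and $f$ is an inflation whose cokernel $N$ is the pushout of $i$ along $f$ (\cite[Proposition 2.12]{Buhler10}, as already used in the proof of Proposition \ref{prop:pltcexact}); here $N \in \A$ trivially. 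Thus Proposition \ref{prop:pltcexact}(i) applies and $(\A, 0, \R(\A), \F)$ is a partial right triangulated category.

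For $(\C, 0, \R(\C), \C \cap \F)$ I run the same argument inside the exact category $\C$. Given $A \in \C$, pick a conflation $A \xrightarrow{i} F' \to C'$ from enough injectives with $F' \in \F$ and $C' \in \C$; since $A, C' \in \C$ and $\C$ is extension-closed we get $F' \in \C$, so $F' \in \C \cap \F$, the conflation lies in $\C$, and the $\Ext^1$-computation with $\Hom_\C(-, Y)$ for $Y \in \C \cap \F$ shows $i$ is a $(\C\cap\F)$-preenvelope in $\C$; this is condition (a). For condition (b), given $f\colon A \to B$ in $\C$, the inflation $A \to F' \oplus B$ with components $i$ and $f$ has cokernel $N$ equal to the pushout, and $N$ fits in a conflation $B \to N \to U$ (the pushout of $A \to F' \to U$ along $f$); since $B, U \in \C$ and $\C$ is extension-closed, $N \in \C$. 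Hence $\C$ is special $(\C\cap\F)$-monic closed, and Proposition \ref{prop:pltcexact}(i) gives the result.

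Part (i) is proved dually: using enough projectives, the vanishing $\Ext^1_\A(C, F) = 0$, the pullback version of \cite[Proposition 2.12]{Buhler10}, and the extension-closure of $\F$, one checks that $\A$ is special $\C$-epic closed and that $\F$ is special $(\C\cap\F)$-epic closed, and then applies Proposition \ref{prop:pltcexact}(ii). The one place demanding attention is precisely keeping the approximating objects and the pushout/pullback cokernels inside $\C$ (resp. $\F$), which is exactly where the extension-closure of the cotorsion classes is used; beyond that the argument is routine, so I anticipate no real obstacle.
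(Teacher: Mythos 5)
Your proposal is correct and follows essentially the same route as the paper: the paper's proof simply notes that $\C$ and $\F$ are extension-closed, hence exact subcategories with the induced structure, and then invokes Proposition \ref{prop:pltcexact}; you supply exactly the details this leaves implicit (the $\Ext^1$-vanishing turning the completeness conflations into preenvelopes/precovers, the pushout/pullback from \cite[Proposition 2.12]{Buhler10} for condition (b), and extension-closure to keep the relevant objects in $\C$, resp.\ $\F$).
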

\subsection*{Partial one-sided triangulated categories from one-sided triangulated categories}

 Let $(\T, \Sigma, \Delta)$ be a right triangulated category. Then for any right triangle $A\stackrel{f}\to B\stackrel{g}\to C\stackrel{h} \to \Sigma(A)$, the morphism $g$ is a weak cokernel of $f$ and $h$ is a weak cokernel of $g$ by \cite[Lemma 1.3]{ABM}. Let $\X$ be an additive subcategory of $\T$. An additive subcategory $\C$ in $\T$ is said to be {\it special $\X$-monic closed} if the following conditions are satisfied:

 (a) \ $\X\subseteq \C$ and for each $A\in \C$, there is a right
 triangle $A\stackrel{i}\to X\to U\to \Sigma(A)$ in $\Delta$ with $U\in \C$ and $i$ an $\X$-preenvelope.

 (b) \ Assume that $A\stackrel{i}\to X\to U\to \Sigma(A)$ is in $\Delta$ with $U\in \C$ and $i$ an $\X$-preenvelope in $\C$. Then for any morphism $f\colon A\to B$ in $\C$, there is a right triangle $A\xrightarrow{\left(\begin{smallmatrix}
	i \\
	f
	\end{smallmatrix}\right)}  X\oplus B \to N \to \Sigma(A)$ in $\Delta$ with $N\in \C$.

 (c) \ If $A\xrto{i} X\xrto{p} U\xrto{q} \Sigma(A)$ is in $\Delta$ with $U\in \C$ and $X\in\X$, then $p$ is a weak kernel of $q$.

\begin{proposition} \label{prop:prtc}\   Let $(\T, \Sigma, \Delta)$ be a right triangulated category. Let $\X\subseteq \C$ be two additive subcategories of $\T$. If $\C$ is special $\X$-monic closed, then $(\T, \Sigma, \R(\C), \X)$ is a partial right triangulated category, where $\R(\C)=\{A \xrto{f} B\to C\to \Sigma(A) \in  \Delta \ | \ C\in \C \}.$
 \end{proposition}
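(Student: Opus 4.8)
The plan is to verify directly that $(\R(\C),\X)$ satisfies all the conditions in Definition \ref{def:prtc}. First I would record that every member of $\R(\C)$ is a right $\C$-sequence: it is by definition a right triangle of $\Delta$ with third term in $\C$, and by \cite[Lemma 1.3]{ABM} the second and third morphisms of a right triangle are weak cokernels of their predecessors. The class $\R(\C)$ is closed under isomorphisms since $\Delta$ and $\C$ are, and it is closed under finite direct sums because the direct sum of two right triangles is again a right triangle (the classical argument, using only (RT1) and (RT3), adapts verbatim to the one-sided setting) while $\C$ is closed under finite direct sums.

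Next I would dispose of the easy axioms. (PRT1)(i) and (PRT1)(iii) are nothing but conditions (a) and (b) in the definition of ``special $\X$-monic closed'', which already produce right triangles of $\Delta$ whose third terms lie in $\C$. For (PRT1)(ii), the sequence $A\xrightarrow{\left(\begin{smallmatrix}1\\f\end{smallmatrix}\right)}A\oplus B\xrightarrow{(f,-1)}B\xrightarrow{0}\Sigma(A)$ is, up to an obvious isomorphism of right $\Sigma$-sequences, the direct sum of the trivial right triangles $A\xrightarrow{1_A}A\to 0\to\Sigma(A)$ and $0\to B\xrightarrow{1_B}B\to 0$ (both in $\Delta$ by (RT1)--(RT2)); hence it lies in $\Delta$, and since $B\in\C$ it lies in $\R(\C)$. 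Axiom (PRT3) is exactly (RT3) of $(\T,\Sigma,\Delta)$ applied to two members of $\R(\C)$.

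The first substantial point is (PRT2), and this is the step I expect to be the crux. Write the two right $\C$-triangles as $A\xrightarrow{f}B\xrightarrow{g}C\xrightarrow{h}\Sigma(A)$ and $A'\xrightarrow{u}X\xrightarrow{s}U\xrightarrow{t}\Sigma(A')$ with $X\in\X$ (so $U\in\C$), and let $\beta\colon B\to X$, $\gamma\colon C\to U$ be the middle and right vertical maps. Assume $\alpha=\alpha'\circ f$. From $(\beta-u\circ\alpha')\circ f=0$ and the fact that $g$ is a weak cokernel of $f$, there is $\mu\colon C\to X$ with $\beta-u\circ\alpha'=\mu\circ g$. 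Then $t\circ(\gamma-s\circ\mu)=t\circ\gamma=\Sigma(\alpha)\circ h=\Sigma(\alpha')\circ\Sigma(f)\circ h=0$, where the first equality uses $t\circ s=0$, the second uses commutativity of the rightmost square, and the last uses $\Sigma(f)\circ h=0$ (obtained by rotating the triangle on $f$ via (RT2)). At this point I would invoke condition (c): the bottom triangle has $X\in\X$ and $U\in\C$, so $s$ is a weak kernel of $t$; hence $\gamma-s\circ\mu$ factors through $s$, and therefore so does $\gamma$. The essential insight is that (c) is precisely what converts the ``error term'' $\gamma-s\circ\mu$ into something visibly factoring through $s$.

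Finally, for (PRT4) I would apply (RT4) of $(\T,\Sigma,\Delta)$ to the three given triangles to obtain the octahedral diagram. Its second column from the right, $C'\xrightarrow{r}B'\to A'\to\Sigma(C')$, is a right triangle of $\Delta$, and its third term $A'$ is the third term of the triangle $B\xrightarrow{g}C\xrightarrow{h}A'\xrightarrow{j}\Sigma(B)\in\R(\C)$, hence lies in $\C$; so this column lies in $\R(\C)$. It remains to see that $r$ is an $\X$-monic. Given $a\colon C'\to X$ with $X\in\X$: since $g$ is an $\X$-monic there is $d\colon C\to X$ with $d\circ g=a\circ l$; then $d\circ(g\circ f)=a\circ l\circ f=0$, so as $k\colon C\to B'$ is a weak cokernel of $g\circ f$ there is $d'\colon B'\to X$ with $d'\circ k=d$; using $r\circ l=k\circ g$ (commutativity in the octahedron) we get $(d'\circ r-a)\circ l=0$, and since the connecting map $\partial\colon C'\to\Sigma(A)$ of the triangle on $f$ is a weak cokernel of $l$ (\cite[Lemma 1.3]{ABM} again) there is $e\colon\Sigma(A)\to X$ with $d'\circ r-a=e\circ\partial$; the octahedral relation $\partial=\partial'\circ r$ (with $\partial'\colon B'\to\Sigma(A)$ the connecting map of the triangle on $g\circ f$) then yields $a=(d'-e\circ\partial')\circ r$, so $r$ is an $\X$-monic. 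The left-handed statement is dual.
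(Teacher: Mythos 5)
Your proposal is correct and follows essentially the same route as the paper: closure properties of $\Delta$, (PRT1)(i),(iii) from the definition of special $\X$-monic closedness, (PRT1)(ii) by a direct check, (PRT2) from condition (c) via $\Sigma(f)\circ h=0$ (your detour through $\mu$ is harmless but unnecessary, since $t\circ\gamma=0$ already gives the factorization), (PRT3) from (RT3), and (PRT4) from (RT4) plus the verification that $r$ is an $\X$-monic. The only cosmetic difference is in that last step, where the paper cites the long exact $\Hom(-,\X)$-sequences of \cite[Corollary 1.4 (a)]{ABM} and the weak four lemma, while you unwind the same argument as an explicit weak-cokernel chase.
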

\begin{proof} \   Since $\Delta$ is closed under isomorphisms and finite direct sums, then so is $\R(\C)$.

(PRT1) \ The statements (i) and (iii) follow from the definition of a special $\X$-monic closed subcategory. For (ii), let $f\colon A\to B$ be a morphism in $\C$. Then it can be proved directly that the sequence $A\xrightarrow{\left(\begin{smallmatrix}
	1 \\
	f
	\end{smallmatrix}\right)} A\oplus B\xrightarrow{(f, -1)}B\xrto{0} \Sigma(A)$ is in $\Delta$, and thus it is in $\R(\C)$.

(PRT2) \ Assume that we have a commutative diagram
	\[\xy\xymatrixcolsep{2pc}\xymatrix@C14pt@R14pt{
A\ar[r]^f\ar[d]_\alpha&B\ar[r]^g\ar[d]^\beta&C\ar[r]^-h\ar[d]^\gamma& \Sigma(A)\ar[d]^{\Sigma(\alpha)}\\
A'\ar[r]^i&X\ar[r]^p&U\ar[r]^-q&\Sigma(A')}
\endxy\]
with rows in $\R(\C)$ and $X\in \X$. If there is a morphism $s\colon B\to A'$ such that  $\alpha=s\circ f$, then $q\circ  \gamma=\Sigma(\alpha)\circ h= \Sigma(s)\circ \Sigma(f)\circ h=0$ by \cite[ Lemma 1.3]{ABM}. Thus there is a morphism $t\colon C\to X$ such that $\gamma=p\circ t$ since $p$ is a weak kernel of $q$ by assumption.

(PRT3) \  This follows from (RT3) directly.

(PRT4) \ Let $A\xrto{f}B\xrto{l} C'\to \Sigma(A)$, $B\xrto{g} C \xrto{h} A'\xrto{j} \Sigma(B)$ and $A\xrto{g\circ f} C \xrto{k} B'\to \Sigma(A)$ be three right triangles in $\R(\C)$ such that $f, g$ are $\X$-monics. Then there is a commutative diagram
	\begin{equation*}
	\xy\xymatrixcolsep{2pc}\xymatrix@C14pt@R14pt{
A\ar[r]^f\ar@{=}[d]&B\ar[r]^l\ar[d]^g&C'\ar[r]\ar[d]^r& \Sigma(A)\ar@{=}[d]\\
A\ar[r]^-{g\circ f}&C\ar[r]^-k\ar[d]^h&B'\ar[r]\ar[d]^s&\Sigma(A)\ar[d]^{\Sigma(f)}\\
&A'\ar@{=}[r]\ar[d]^j&A'\ar[r]^-j\ar[d]&\Sigma(B)\\
&\Sigma(B)\ar[r]^{\Sigma(l)}&\Sigma(C')}
\endxy
	\end{equation*}
whose second column from the right is in $\Delta$ by (RT4). By \cite[Corollary 1.4 (a)]{ABM}, the above commutative diagram induces a commutative diagram with exact rows
\[\xy\xymatrixcolsep{2pc}\xymatrix@C14pt@R14pt{
\Hom_\T(\Sigma(A),\X)\ar[r]\ar@{=}[d]&\Hom_\T(B',\X)\ar[r]\ar[d]^{r^*}&\Hom_\T(C,\X)\ar[r]\ar[d]^{g^*}& \Hom_\T(A,\X)\ar@{=}[d]\\
\Hom_\T(\Sigma(A),\X)\ar[r]&\Hom_\T(C',\X)\ar[r]&\Hom_\T(B,\X)\ar[r]&\Hom_\T(A,\X)}
\endxy\]
Since $g^*$ is an epimorphism, by the weak four lemma we know that $r^*$ is also an epimorphism. Thus $r$ is an $\X$-monic. So $C'\xrto{r}B'\xrto{s}A'\xrto{\Sigma(l)\circ j} \Sigma(C')$ is in $\R(\C)$.
\end{proof}

Dually, let $(\T, \Omega, \nabla)$ be a left triangulated category and $\X$ an additive subcategory of $\T$, we also have the notion of a {\it special $\X$-epic closed} subcategory of $\T$ and the following result:
\begin{proposition} \label{prop:pltc}\  Let $(\T,\Omega,\nabla)$ be a left triangulated category and $\X, \C$ two additive subcategories of $\T$. If $\C$ is special $\X$-epic closed, then $(\T, \Omega, \rmL(\C), \X)$ is a partial left triangulated category, where $\rmL(\C)=\{\Omega(B)\to K\to A \xrto{f} B \in  \nabla \ | \ K\in \C \}.$
 \end{proposition}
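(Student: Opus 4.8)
The plan is to deduce the proposition from Proposition \ref{prop:prtc} by passing to the opposite category. By construction, the notion of a partial left triangulated category is the formal arrow-reversal of Definition \ref{def:prtc}: a quadruple $(\T, \Omega, \rmL(\C), \X)$ is a partial left triangulated category if and only if $(\T^{\mathrm{op}}, \Omega, \rmL(\C)^{\mathrm{op}}, \X)$ is a partial right triangulated category, where $\rmL(\C)^{\mathrm{op}}$ denotes the class of right $\Omega$-sequences of $\T^{\mathrm{op}}$ obtained by reversing every arrow of a member of $\rmL(\C)$. Similarly, by the very definition of a left triangulated category as the dual of a right triangulated one (Definition \ref{def:rtricat}), $(\T, \Omega, \nabla)$ is a left triangulated category precisely when $(\T^{\mathrm{op}}, \Omega, \nabla^{\mathrm{op}})$ is a right triangulated category.

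First I would check that passing to $\T^{\mathrm{op}}$ turns the hypothesis ``$\C$ is special $\X$-epic closed in $(\T, \Omega, \nabla)$'' into ``$\C$ is special $\X$-monic closed in the right triangulated category $(\T^{\mathrm{op}}, \Omega, \nabla^{\mathrm{op}})$''. This is just a matter of tracking the dualities: an $\X$-precover in $\T$ is an $\X$-preenvelope in $\T^{\mathrm{op}}$, an $\X$-epic becomes an $\X$-monic, a weak kernel becomes a weak cokernel, and a left $\Omega$-triangle $\Omega(A) \to U \to X \to A$ in $\nabla$ with $U \in \C$ and $X \in \X$ reverses to a right $\Omega$-triangle $A \to X \to U \to \Omega(A)$ in $\nabla^{\mathrm{op}}$; under these identifications the defining conditions of a special $\X$-epic closed subcategory of $(\T, \Omega, \nabla)$, including the dual of the auxiliary axiom (c), become verbatim those of a special $\X$-monic closed subcategory of $(\T^{\mathrm{op}}, \Omega, \nabla^{\mathrm{op}})$. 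Moreover the class $\rmL(\C) = \{\Omega(B) \to K \to A \xrto{f} B \in \nabla \mid K \in \C\}$ reverses exactly to the class $\R(\C) = \{A' \xrto{f'} B' \to C' \to \Omega(A') \in \nabla^{\mathrm{op}} \mid C' \in \C\}$ that Proposition \ref{prop:prtc} attaches to the special $\X$-monic closed subcategory $\C$ of $(\T^{\mathrm{op}}, \Omega, \nabla^{\mathrm{op}})$.

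Then, applying Proposition \ref{prop:prtc} inside $\T^{\mathrm{op}}$, I conclude that $(\T^{\mathrm{op}}, \Omega, \R(\C), \X)$ is a partial right triangulated category; reversing arrows back to $\T$, this says precisely that $(\T, \Omega, \rmL(\C), \X)$ satisfies the axioms dual to (PRT1)--(PRT4), hence is a partial left triangulated category. Should a self-contained argument be preferred, one may instead transcribe the proof of Proposition \ref{prop:prtc} with every arrow reversed: (PLT1) from the duals of conditions (a) and (b) together with the split left $\Omega$-sequence dual to the one in (PRT1)(ii); (PLT2) from the dual of axiom (c) exactly as in the verification of (PRT2); (PLT3) directly from the dual of (RT3); and (PLT4) from the dual of (RT4), by applying $\Hom_\T(\X, -)$ to the resulting octahedron and invoking the weak four lemma together with the dual of \cite[Corollary 1.4 (a)]{ABM}.

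The only real issue — and the point I would be most careful about — is purely bookkeeping: one must be sure that the versions of ``partial left triangulated category'' and ``special $\X$-epic closed subcategory'' used in the paper really are the literal arrow-reversals of the right-handed notions, in particular that the dual of the extra axiom (c) is part of the definition. Once that is confirmed, no new idea is required, and the statement is a formal consequence of Proposition \ref{prop:prtc}.
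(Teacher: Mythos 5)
Your proposal is correct and coincides with the paper's treatment: the paper gives no separate argument for Proposition \ref{prop:pltc}, deducing it by exactly the duality you describe from Proposition \ref{prop:prtc}, with the notion of a special $\X$-epic closed subcategory (including the dual of axiom (c)) defined as the formal arrow-reversal of the $\X$-monic closed one. Your bookkeeping caveat is resolved by the paper's definitions, so no further work is needed.
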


Let $(\T, [1], \Delta)$ be a triangulated category and $\C$ an additive subcategory of $\T$. Recall that $\C$ is said to be {\it extension-closed} if $A\to B\to C\to A[1]$ is a triangle in $\Delta$ such that $A, C\in \C$, then $B\in \C$. We have the following lemma.

\begin{lemma} \label{lem:specialclosed} \ Let $(\T, [1], \Delta)$ be a triangulated category and $\C$ an extension-closed additive subcategory of $\T$. Assume that $\X$ is an additive subcategory of $\T$ contained in $\C$. Then

$(\mathrm{i})$ \ If for each $A\in \C$, there is a triangle $A\stackrel{i}\to X\to U\to A[1]$ in $\Delta$ with $U\in \C$ and $i$ an $\X$-preenvelope, then $\C$ is special $\X$-monic closed.

$(\mathrm{ii})$ \ If for each $A\in \C$, there is a triangle $A[-1]\to K\to X\xrto{\pi} A$ in $\Delta$ with $K\in \C$ and $\pi$ an $\X$-precover, then $\C$ is special $\X$-epic closed.
\end{lemma}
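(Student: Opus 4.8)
The plan is to check, one by one, the three conditions (a), (b), (c) defining a special $\X$-monic closed subcategory that are listed just before Proposition \ref{prop:prtc}, now for the right triangulated category $(\T, [1], \Delta)$. Condition (a) is precisely the hypothesis of part (i). Condition (c) costs nothing in a genuine triangulated category: given a distinguished triangle $A \xrto{i} X \xrto{p} U \xrto{q} A[1]$, its rotation $X \xrto{p} U \xrto{q} A[1] \xrto{-i[1]} X[1]$ is again distinguished, so applying $\Hom_\T(W,-)$ for an arbitrary object $W$ yields an exact sequence, which says exactly that $p$ is a weak kernel of $q$; no use is made of $X\in\X$ here.

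The content is condition (b). I would fix a distinguished triangle $A \xrto{i} X \xrto{p} U \xrto{q} A[1]$ with $U\in\C$, $X\in\X$ and $i$ an $\X$-preenvelope, together with a morphism $f\colon A\to B$ in $\C$, and complete $\left(\begin{smallmatrix} i \\ f\end{smallmatrix}\right)\colon A\to X\oplus B$ to a distinguished triangle $A \xrightarrow{\left(\begin{smallmatrix} i \\ f\end{smallmatrix}\right)} X\oplus B \to N \to A[1]$. The key move is to apply the octahedral axiom to the two composable morphisms $A \xrightarrow{\left(\begin{smallmatrix} i \\ f\end{smallmatrix}\right)} X\oplus B \xrightarrow{(1_X,\,0)} X$, whose composite is $i$. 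Since $(1_X,0)$ is a split epimorphism, the direct-sum triangle $B \xrightarrow{\left(\begin{smallmatrix} 0 \\ 1\end{smallmatrix}\right)} X\oplus B \xrightarrow{(1_X,0)} X \xrto{0} B[1]$ is distinguished, so the cone of $(1_X,0)$ is $B[1]$. The octahedron then produces a distinguished triangle whose outer vertices are the cones of $\left(\begin{smallmatrix} i \\ f\end{smallmatrix}\right)$ and of $i$ and whose middle vertex is the cone of $(1_X,0)$, i.e. $N \to U \to B[1] \to N[1]$; rotating it backwards gives the distinguished triangle $B \to N \to U \to B[1]$. Since $B,U\in\C$ and $\C$ is extension-closed, $N\in\C$, which is condition (b). This proves (i); closure of $\Delta$ under isomorphisms and finite direct sums (needed for the split triangle and to free ourselves from the choice of cone) is immediate.

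Part (ii) is the dual statement, and I would deduce it from (i) by passing to the opposite category: $\T^{\mathrm{op}}$ is triangulated, $\C$ remains extension-closed in it, an $\X$-precover becomes an $\X$-preenvelope, and a triangle $A[-1]\to K\to X\xrto{\pi}A$ in $\T$ becomes a triangle $A\xrto{\pi}X\to K\to\Sigma_{\mathrm{op}}(A)$ in $\T^{\mathrm{op}}$ of the shape required in (i); so (i) applied in $\T^{\mathrm{op}}$ says exactly that $\C$ is special $\X$-epic closed in $\T$. Alternatively one repeats the octahedral argument dually, using a split monomorphism in place of $(1_X,0)$ and closure under extensions on the other side.

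The only step where care is needed is the octahedral one: one must invoke the axiom with exactly the composable pair $\left(\begin{smallmatrix} i \\ f\end{smallmatrix}\right)$ then $(1_X,0)$ so that $N$ and $U$ become the outer terms of the new triangle, and one must correctly identify the cone of the split epimorphism $(1_X,0)$ as $B[1]$. Everything else — the weak-kernel property, closure of $\Delta$ under isomorphisms and direct sums — is formal.
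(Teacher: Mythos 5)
Your proposal is correct and takes essentially the same route as the paper: conditions (a) and (c) are immediate (the latter by rotation in a genuine triangulated category), and for condition (b) you identify the cone $N$ of $\left(\begin{smallmatrix} i \\ f \end{smallmatrix}\right)$, exhibit a distinguished triangle $B\to N\to U\to B[1]$, and conclude $N\in\C$ from extension-closure, with (ii) handled by duality just as in the paper. The only cosmetic difference is that you derive that triangle by an explicit octahedron on $\left(\begin{smallmatrix} i \\ f \end{smallmatrix}\right)$ followed by the split epimorphism $(1_X,0)$, whereas the paper simply invokes the standard homotopy-pushout (homotopy cocartesian square) fact, which is the same statement.
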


\begin{proof} We only prove (i), the statement (ii) can be prove dually. By assumption, the condition (a) of a special $\X$-monic closed subcategory holds. Since $\T$ is a triangulated category, thus for any triangle $A\xrto{f}B\xrto{g}C\xrto{h}A[1]$ in $\Delta$, $g$ is a weak kernel of $h$ and then the condition (c) of a special $\X$-monic closed subcategory holds. For the condition (b), let $A\stackrel{i}\to X\to U\to \Sigma(A)$ be a triangle in $\Delta$ with $U\in\C$ and $i$ an $\X$-preenvelope in $\C$. If $f\colon A\to B$ is a morphism in $\C$, then there is a commutative diagram of triangles in $\Delta$:
\[\xy\xymatrixcolsep{2pc}\xymatrix@C14pt@R14pt{
A\ar[r]^i\ar[d]_f&X\ar[r]\ar[d]^{-g}&U\ar[r]\ar@{=}[d]& \Sigma(A)\ar[d]^{\Sigma(f)}\\
B\ar[r]^h&N\ar[r]&U\ar[r]&\Sigma(A')}
\endxy\]
such that the leftmost square is a homotopy pushout, i.e. $A\xrto{\left(\begin{smallmatrix}
	i \\
	f
	\end{smallmatrix}\right)}X\oplus B\xrto{(-g, h)} N\to A[1]$ is a triangle in $\Delta$. Since $\C$ is extension-closed, we know that $N\in \C$. \end{proof}

\begin{example} \label{exam:tricat} (i) \ Let $(\T,[1], \Delta)$ be a triangulated category. Let $\X$ and $\C$ be additive subcategories of $\T$ closed under direct summands. Assume that $(\C, \C)$ forms an {\it $\X$-mutation pair} in the sense of \cite[Definition 2.5]{Iyama-Yoshino}:

$(\mathrm{a})$ \ $\C$ is {\it extension-closed}.

$(\mathrm{b})$ \ $\X\subseteq \C$ and $\Hom_\T(\X[-1], \C)=0=\Hom_\T(\C, \X[1])$.

$(\mathrm{c})$ \ For any $A\in \C$, there exist triangles $A[-1]\to K_A\to X_A\to A$ and $ A\to X^A\to K^A\to A[1]$ in $\bigtriangleup$ such that $
X_A, X^A\in\X$ and $K_A, K^A\in \C$.

\noindent Then $\C$ is both special $\X$-epic and special $\X$-monic closed by Lemma \ref{lem:specialclosed}. Therefore $(\T, [1], \R(\C), \X)$ is a partial right triangulated category and $(\T, [-1], \rmL(\C), \X)$ is a partial left triangulated category, where the classes $\R(\C)$ and $\rmL(\C)$ are as constructed in Propositions \ref{prop:prtc} and \ref{prop:pltc}.

(ii) \ Let $(\T,[1], \Delta)$ be a triangulated category. Let $\X$ and $\C$ be additive subcategories of $\T$ closed under direct summands. If $\C$ is {\it $\X$-Frobenius} in the sense of \cite[Definition 3.2]{Beligiannis13}, then $(\T, [1], \R(\C), \X)$ is a partial right triangulated category and $(\T, [-1], \rmL(\C), \X)$ is a partial left triangulated category, where the classes $\R(\C)$ and $\rmL(\C)$ are as constructed in Propositions \ref{prop:prtc} and \ref{prop:pltc}.

(iii) \ Let $(\T, \Sigma, \Delta)$ be a right triangulated category and $\Y\subseteq \C$ two additive subcategories of $\T$ as in the setting of \cite{Liu-Zhu}, then $(\T, \Sigma, \R(\C), \Y)$ as constructed in Proposition \ref{prop:prtc} is a partial right triangulated category by \cite[Lemma 3.3]{Liu-Zhu} and the proof of \cite[Theorem 3.9]{Liu-Zhu}.
\end{example}
\subsection*{Partial one-sided triangulated categories from torsion pairs in triangulated categories}

Let $(\T, [1], \Delta)$ be a triangulated category. Recall that a pair $(\X, \Y)$ of additive subcategories of $\T$ is called a {\it torsion pair} (also called a {\it torsion theory} in \cite[Definition 2.2]{Iyama-Yoshino}) if $\Hom_\T(\X, \Y)=0$ and for each $T\in \T$, there is a triangle $X\to T\to Y\to X[1]$ in $\Delta$ with $X\in \X$ and $Y\in \Y$.

\begin{corollary}\label{cor:torsion} \  Let $(\X, \Y)$ be a torsion pair in the triangulated category $(\T, [1], \Delta)$. Then

$(\mathrm{i})$\ $(\T, [1], \Delta, \Y)$ is a partial right triangulated category.

 $(\mathrm{ii})$\ $(\T, [-1], \Delta, \X)$ is a partial left triangulated category.
\end{corollary}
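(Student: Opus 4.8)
The plan is to derive Corollary \ref{cor:torsion} as a direct application of Proposition \ref{prop:prtc} and Lemma \ref{lem:specialclosed}, by recognizing that a torsion pair $(\X, \Y)$ in a triangulated category is a special situation in which the ambient category $\T$ itself plays the role of the extension-closed subcategory $\C$. First I would prove (i). Take $\C = \T$; since $\T$ is trivially extension-closed and $\X \subseteq \T$, Lemma \ref{lem:specialclosed}(i) applies once we produce, for each $A \in \T$, a triangle $A \xrightarrow{i} X \to U \to A[1]$ in $\Delta$ with $U \in \T$ (automatic) and $i$ an $\X$-preenvelope. This is exactly what the torsion pair gives after a rotation: by definition there is a triangle $X' \to A \to Y \to X'[1]$ with $X' \in \X$ and $Y \in \Y$; rotating yields $A \to Y \to X'[1] \to A[1]$. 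That does not immediately look like what we want, so instead I would use the torsion triangle for a shifted object or, more directly, observe that the \emph{$\X$-preenvelope} of $A$ is obtained from the triangle $X' \to A \to Y \to X'[1]$ by noting that $\Hom_\T(Y, \X) = 0$ for all $Y \in \Y$ (since $\Hom_\T(\X, \Y) = 0$ forces $\Hom_\T(\Y[-1], \X)$-type vanishing via adjunction along $[1]$; more precisely $\Hom_\T(\X,\Y)=0$ gives $\Hom_\T(\Y,\X[1])=0$ is not automatic, so care is needed here). The cleanest route: apply the torsion decomposition to $A[1]$ to get a triangle $X_0 \to A[1] \to Y_0 \to X_0[1]$ with $X_0 \in \X$, $Y_0 \in \Y$; rotating backwards gives $Y_0[-1] \to X_0 \to A[1] \to Y_0$, i.e. $A \to Y_0[-1][1]=Y_0 \to \cdots$ — this is getting tangled, so the key lemma I actually need is:

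\begin{lemma*}
For a torsion pair $(\X,\Y)$ in $\T$ and any $A \in \T$, the canonical map $A \to Y$ in the torsion triangle $X \to A \xrightarrow{\epsilon} Y \to X[1]$ need not be a $\Y$-envelope pattern; rather, applying the decomposition functorially shows that every object of $\T$ admits a $\Y$-preenvelope, namely $\epsilon$, because $\Hom_\T(Y', \Y)$...
\end{lemma*}

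\noindent Let me restate the real argument without the false starts. The torsion triangle $X_A \to A \to Y_A \to X_A[1]$ with $X_A \in \X$ shows the map $A \to Y_A$ is a $\Y$-preenvelope: given any $g\colon A \to Y'$ with $Y' \in \Y$, the composite $X_A \to A \to Y'$ lies in $\Hom_\T(\X,\Y) = 0$, so $g$ factors through $A \to Y_A$. Thus with $\C = \T$ and the subcategory $\Y$ in the role of ``$\X$'' of Lemma \ref{lem:specialclosed}, condition (i) of that lemma is met: for each $A$ there is a triangle $A \to Y_A \to X_A[1] \to A[1]$, which after renaming is a triangle $A \to Y \to U \to A[1]$ with $U = X_A[1] \in \T$ and $A \to Y$ a $\Y$-preenvelope. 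Hence $\T$ is special $\Y$-monic closed, and Proposition \ref{prop:prtc} gives that $(\T, [1], \R(\T), \Y)$ is a partial right triangulated category, where $\R(\T) = \{A \to B \to C \to A[1] \in \Delta \mid C \in \T\} = \Delta$. This is exactly statement (i). Statement (ii) is proved dually: the torsion triangle $X \to A \to Y \to X[1]$, rotated to $Y[-1] \to X \to A \to Y$, exhibits $X \to A$ as an $\X$-precover (any $h\colon X' \to A$ with $X' \in \X$ composed with $A \to Y$ lies in $\Hom_\T(\X,\Y) = 0$, hence factors through $X \to A$), so by the dual of Lemma \ref{lem:specialclosed}, $\T$ is special $\X$-epic closed, and the dual Proposition \ref{prop:pltc} applied with $\Omega = [-1]$ yields that $(\T, [-1], \rmL(\T) = \Delta, \X)$ is a partial left triangulated category.

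The main obstacle, such as it is, is purely bookkeeping: one must verify that the preenvelope/precover condition extracted from the torsion triangle is genuinely an $\X$-monic/$\X$-epic in the technical sense (surjectivity of the induced $\Hom$-map), not merely that every map to/from the relevant subcategory factors — but these are equivalent, and the factorization argument via $\Hom_\T(\X,\Y)=0$ supplies it immediately. There is no need to reprove any of the axioms (PRT1)--(PRT4) directly, since Propositions \ref{prop:prtc} and \ref{prop:pltc} and Lemma \ref{lem:specialclosed} have already done that work once the ``special closed'' hypothesis is checked; the entire content of the corollary is the observation that a torsion pair supplies exactly that hypothesis with $\C = \T$. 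I would write the proof in about six lines, citing Lemma \ref{lem:specialclosed} and Proposition \ref{prop:prtc} for (i) and their duals for (ii).
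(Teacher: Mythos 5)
Your cleaned-up argument is correct and is exactly the paper's route: the torsion triangle $X_A\to A\to Y_A\to X_A[1]$ together with $\Hom_\T(\X,\Y)=0$ shows (after rotation) that $\T$ is special $\Y$-monic closed and special $\X$-epic closed with $\C=\T$, and then Lemma \ref{lem:specialclosed} and Propositions \ref{prop:prtc}, \ref{prop:pltc} give the two partial one-sided triangulated structures with $\R(\T)=\rmL(\T)=\Delta$. The earlier false starts should simply be deleted; the final six-line version you describe is the proof.
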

\begin{proof} This is since $\T$ is both special $\Y$-monic closed and special $\X$-epic closed if $(\X,\Y)$ is a torsion pair in $\T$.
\end{proof}

\section{Exact and abelian subfactor categories}
In this section we use the results from the previous sections to construct exact subfactor categories from exact categories and abelian subfactor categories from triangulated categories.
\subsection*{Exact subfactor categories} Let $\A$ be an additive category. Recall that $\A$ is said to be {\it preabelian} if every morphism in $\A$ has a kernel and a cokernel.

Let $(\A, \E)$ be an exact category. If $\X$ is an additive subcategory of $\A$, then a conflation $A\stackrel{f}\to B\stackrel{g}\to C$ in $\E$ is said to be {\it $\X$-complete} if $f$ is an $\X$-monic and $g$ is an $\X$-epic. We have the following theorem which covers \cite[Theorem A (2)]{Kussin/Lenzing/Meltz} (compare \cite[Theorem 3.1]{Chen12} and \cite[Theorem 3.5]{Demonet/Iyama}):

\begin{theorem} \ \label{thm:sexact} Let $(\A, \E)$ be an exact category with $\X$ an additive subcategory. If for each object $A$ there are conflations $X_1\to X_0\stackrel{p}\to A $ and $ A\xrto{i} X^0\to X^1$ such that $p$ is an $\X$-precover, $i$ is an $\X$-preenvelope and $ X_1, X^1 \in \X$, then  $\A/\X$ is a preabelian category and has an exact structure induced by $\X$-complete conflations.
\end{theorem}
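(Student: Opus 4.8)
The plan is to derive the preabelian structure from Theorem~\ref{thm:main}, to read off which kernel-cokernel sequences should be the conflations, and then to verify Quillen's axioms by lifting along the quotient functor $\pi\colon\A\to\A/\X$. First I would observe that, taking $\C=\A$, the two families of conflations in the hypothesis say exactly that $\A$ is simultaneously special $\X$-monic closed and special $\X$-epic closed: condition (a) of Definition~\ref{defn:specialclosedexact} and its dual are the hypotheses themselves (the requirements $U\in\C$, $K\in\C$ being vacuous), and condition (b) and its dual hold by \cite[Proposition~2.12]{Buhler10} exactly as in the proof of Proposition~\ref{prop:pltcexact}. Hence $(\A,0,\R(\A),\X)$ is a partial right triangulated category and $(\A,0,\rmL(\A),\X)$ is a partial left triangulated category, so by Theorem~\ref{thm:main} the subfactor $\A/\X$ carries both a right triangulated structure $(\A/\X,\Sigma^{\X},\Delta^{\X})$ and a left triangulated structure $(\A/\X,\Omega_\X,\nabla_\X)$. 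Because $X^1\in\X$ (resp. $X_1\in\X$), I may choose the fixed right (resp. left) $\C$-triangle attached to each $A$ to be $A\xrto{i}X^0\to X^1\to 0$ (resp. $0\to X_1\to X_0\to A$), so that $\Sigma^{\X}(A)=X^1$ and $\Omega_\X(A)=X_1$ lie in $\X$; thus $\Sigma^{\X}$ and $\Omega_\X$ are naturally isomorphic to the zero functor. Now in a right triangulated category with vanishing suspension every morphism has a cokernel: for $\ul f\colon A\to B$ pick a right triangle $A\xrto{\ul f}B\xrto{\ul g}C\to 0$ in $\Delta^{\X}$; then $\ul g$ is a weak cokernel of $\ul f$ by \cite[Lemma~1.3]{ABM}, while rotating (RT2) the triangle $B\xrto{\ul g}C\to 0\to 0$ exhibits $C\to 0$ as a weak cokernel of $\ul g$, forcing $\ul g$ to be epic, so $\ul g=\Coker\ul f$. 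Dually $\nabla_\X$ supplies kernels, and therefore $\A/\X$ is preabelian.

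Next I would let $\E'$ be the class of sequences in $\A/\X$ that are isomorphic to the image $A\xrto{\ul f}B\xrto{\ul g}C$ of some $\X$-complete conflation $A\xrto{f}B\xrto{g}C$ of $(\A,\E)$, and check that every member of $\E'$ is a kernel-cokernel sequence. Indeed, for such a conflation the right $\Sigma$-sequence $A\xrto{f}B\xrto{g}C\xrto{0}0$ lies in $\R(\A)$ and $f$ is $\X$-monic, so $A\xrto{\ul f}B\xrto{\ul g}C\to\Sigma^{\X}(A)\cong 0$ is a distinguished right triangle, whence $\ul g=\Coker\ul f$ by the argument above; dually $0\to A\xrto{\ul f}B\xrto{\ul g}C$ is a distinguished left triangle, so $\ul f=\Ker\ul g$. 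The class $\E'$ is closed under isomorphism by construction, it contains the split sequences (images of split, hence $\X$-complete, conflations of $\E$), and the corresponding classes of inflations and deflations are stable under pre- and post-composition with isomorphisms of $\A/\X$.

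It then remains to verify Quillen's axioms for $\E'$. Axiom (Ex0) is immediate. For (Ex1) and (Ex1)$^{\mathrm{op}}$ one represents the given composable deflations (resp. inflations) by $\X$-complete conflations of $\E$, uses that a composite of deflations (resp. inflations) in $\E$ is again one, observes that its kernel (resp. cokernel) sits in a conflation assembled from the two individual kernels (resp. cokernels) by the usual $3\times 3$-construction, and checks by a diagram chase — using the lifting property of $\X$-epics and the extension property of $\X$-monics — that the resulting conflation stays $\X$-complete, so its image lies in $\E'$. For (Ex2): given a deflation $\ul g\colon A\to B$ of $\E'$, realized by an $\X$-complete conflation $K\xrto{f}A\xrto{g}B$ of $\E$, and an arbitrary $\ul h\colon B'\to B$, lift $h$ to $\A$ and form the base-change conflation $K\xrto{f'}A'\xrto{g'}B'$ in $\E$ with $A'=A\times_B B'$ (available by (Ex2) for $\A$); using that $g$ is $\X$-epic, resp. that $f$ is $\X$-monic, one verifies that $g'$ is $\X$-epic and $f'$ is $\X$-monic, so this conflation is $\X$-complete and its image lies in $\E'$. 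The essential point is that the image square is a pullback in $\A/\X$: if $P$ denotes the categorical pullback of $\ul g,\ul h$ there, then the canonical map $\phi\colon A'\to P$ is a split epimorphism (lift the two projections of $P$ to $A'$, correcting the obstruction, which factors through an object of $\X$, by means of $g$ being $\X$-epic), and $\phi$ is a monomorphism since $\Ker\phi$ factors through $\Ker\ul{g'}=\ul{f'}$ and hence, because the first projection $A'\to A$ composed with $f'$ equals $f$, through $\ul f=\Ker\ul g$, which is already monic; so $\phi$ is an isomorphism and $\ul{g'}$ is the required deflation. Axiom (Ex2)$^{\mathrm{op}}$ is proved dually with pushouts in $\E$ and $\X$-preenvelopes. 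This produces the exact structure on the preabelian category $\A/\X$ induced by the $\X$-complete conflations.

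I expect this last step to be the main obstacle. It requires at once (i) a string of diagram chases in $(\A,\E)$ showing that $\X$-completeness of conflations survives composition, base change and cobase change — resting on the defining lifting/extension properties of $\X$-epics and $\X$-monics together with the snake-type kernel and cokernel sequences of $\A$ — and (ii) controlling the failure of $\pi$ to preserve pullbacks and to reflect isomorphisms, so that the pullback (resp. pushout) squares produced in $\E$ genuinely compute the categorical pullbacks (resp. pushouts) of $\A/\X$; the latter is the delicate part and is handled by the split-epimorphism-plus-monomorphism argument above, where it is crucial that, by Theorem~\ref{thm:main}, an $\X$-complete conflation already descends to an honest kernel-cokernel sequence in $\A/\X$.
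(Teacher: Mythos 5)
Your overall route is the same as the paper's: take $\C=\A$ in Proposition \ref{prop:pltcexact} to get the partial right and left triangulated structures $(\R(\A),\X)$ and $(\rmL(\A),\X)$, apply Theorem \ref{thm:main}, note that $\Sigma^{\X}$ and $\Omega_\X$ vanish so $\A/\X$ is preabelian, then take the images of $\X$-complete conflations as the conflations of $\A/\X$ and verify Quillen's axioms by lifting to $(\A,\E)$ and checking that $\X$-completeness survives composition and (co)base change. Two points differ. First, for $(\mathrm{Ex}2)$/$(\mathrm{Ex}2)^{\mathrm{op}}$ the paper deduces that the constructed square is a pullback (pushout) in $\A/\X$ from the fact that the conflation $A\to B\oplus A'\to B'$ of \cite[Proposition 2.12]{Buhler10} is itself $\X$-complete, hence descends to a kernel--cokernel sequence in $\A/\X$; your comparison-map argument (the canonical map $A'\to P$ is a split epimorphism after correcting the lifted cone via the $\X$-epic $g$, and a monomorphism because $\Ker\,\ul{g'}=\ul{f'}$ and $\ul{f}$ is monic) is correct but more laborious, and the paper's observation is the slicker way to get the universal property. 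Second, you close your class $\E'$ under isomorphisms in $\A/\X$, whereas the paper works with the literal images of $\X$-complete conflations; with your definition the step ``represent the given composable deflations (inflations) by $\X$-complete conflations of $\E$'' is not automatic, since isomorphisms in $\A/\X$ need not lift to $\A$, so a morphism merely isomorphic in $\A/\X$ to the image of an $\X$-complete deflation is not yet known to be the image of one --- either prove this transport lemma or argue on the images themselves as the paper does. Your sketch of $(\mathrm{Ex}1)$ agrees in substance with the paper's proof, which makes the $\X$-epicity of the composite's cokernel explicit via the $\Hom_\A(\X,-)$ sequences and the Short Five Lemma.
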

\begin{proof}\ By assumption and Proposition \ref{prop:pltcexact}, $\A$ has a partial right triangulated structure $(\R(\A), \X)$ with $\R(\A)=\{A\to B\to C\to 0 \ | A\to B\to C \in \E\}$, and a partial left triangulated structure $(\rmL(\A), \X)$ with $\rmL(\A)=\{  0\to K\to A\to B \ | \  K\to A\to B\in \E\}$. They induce a right triangulated structure $(\Sigma^\X, \Delta^\X)$ and a left triangulated structure $(\Omega_\X, \nabla_\X)$ on $\A/\X$ by Theorem \ref{thm:main}. By the constructions of  $\Sigma^\X$ and $\Omega_\X$, we know that they are zero functors. Therefore any morphism in $\A/\X$ has a kernel and a cokernel, i.e. $\A/\X$ is a preabelian category.

If $A\xrto{f} B\xrto{g} C$ is an $\X$-complete conflation, by the constructions of standard right and left triangles in $\A/\X$, we have a standard left triangle $0\to A\xrto{\ul{f}} B\xrto{\ul{g}} C$ and a standard right triangle $ A\xrto{\ul{f}} B\xrto{\ul{g}} C\to 0$. So $ A\xrto{\ul{f}} B\xrto{\ul{g}} C$ is a kernel-cokernel sequence in $\A/\X$.
	
 Let $\ul{\E}_\X$ be the class of kernel-cokernel sequences in $\A/\X$ induced by $\X$-complete conflations. We will show that $\ul{\E}_\X$ is an exact structure on $\A/\X$ by checking axioms of exact categories one by one. We only verify axioms $(\rm{Ex}0), (\rm{Ex}1)^{\rm op}$ and $(\rm{Ex2})^{\rm op}$ since the others can be proved dually.
	
	$(\mathrm{Ex}0)$ \ This is since $A\xrto{1} A\to 0$ is an $\X$-complete conflation for each $A\in \A$.
	
	$(\mathrm{Ex}1)^{\rm op}$ \ Let $\ul{f}\colon A\to B$ and $\ul{g}\colon B\to C$ be two composable inflations in $\A/\X$. Then there are two kernel-cokernel sequences $A\xrto{\ul{f}} B\xrto{\ul{l}} C' $ and $B\xrto{\ul{g}} C\xrto{\ul{h}} A'$ in $\mathcal{E}_\X$. Thus we have two $\X$-complete conflations $A \xrto{f} B \xrto{l} C'$ and $B\xrto{g} C \xrto{h} A'$ in $\A$. Since inflations in $\A$ are closed under composites, we have a conflation $A\xrightarrow{g\circ f}C\xrto{k} B'$ in $\A$. Similar to the proof of (PRT4) in Proposition \ref{prop:pltcexact}, we have a commutative diagram in $\A$
\[\xy\xymatrixcolsep{2pc}\xymatrix@C14pt@R14pt{
A\ar[r]^f\ar@{=}[d]&B\ar[r]^l\ar[d]^g&C'\ar[d]^r\\
A\ar[r]^-{g\circ f}&C\ar[r]^-k\ar[d]^-h&B'\ar[d]^s\\
&A'\ar@{=}[r]&A'}
\endxy\]
such that the rightmost column is a conflation. Since $h=s\circ k$ is an $\X$-epic, so is $s$. Thus the above diagram induces a commutative diagram with exact rows
\[\xy\xymatrixcolsep{2pc}\xymatrix@C14pt@R14pt{
0\ar[r]&\Hom_\A(\X,B)\ar[r]\ar[d]_{l_*}&\Hom_\A(\X,C)\ar[r]\ar[d]^{k_*}&\Hom_\A(\X,A')\arrow[r]\ar@{=}[d]& 0\\
0\ar[r]&\Hom_\A(\X,C')\ar[r]&\Hom_\A(\X, B')\ar[r]&\Hom_\A(\X, A')\ar[r]&0}
\endxy\]
Since the induced morphism $l_*$ is an epimorphism, so is $k_*$ by the Short Five Lemma. Thus $k$ is an $\X$-epic and the conflation $ A\xrightarrow{g\circ f} C\xrto{k} B'$ is $\X$-complete. So we have a sequence $ A\xrightarrow{\ul{g}\circ \ul{f}}C\xrto{\ul{k}} B' $ in $\ul{\E}_\X$. Therefore, $\ul{g}\circ \ul{f}$ is an inflation in $\A/\X$.
	
	$(\mathrm{Ex}2)^{\mathrm{op}}$ \ Let $\ul{f}\colon A\to B$ be an inflation in $\A/\X$ and $\ul{\alpha}\colon A\to A'$ a morphism in $\A/\X$. Assume that $\ul{f}$ is induced by the $\X$-complete conflation $A\xrto{f} B\xrto{g} C $ in $\A$. Then we have a pushout diagram in $\A$ by \cite[Proposition 2.12]{Buhler10}:	
		\[\xy\xymatrixcolsep{2pc}\xymatrix@C14pt@R14pt{
A\ar[r]^f\ar[d]_\alpha&B\ar[r]^g\ar[d]^\beta&C\ar@{=}[d]\\
A'\ar[r]^{f'}&B'\ar[r]^{g'}&C}
\endxy\]
Since $\X$-monics are closed under pushout we know that $f'$ is an $\X$-monic. Since $g=g'\circ \beta$ is an $\X$-epic, so is $g'$. Thus the second row in the above diagram is $\X$-complete, from which we know that $\ul{f}'$ is an inflation in $\A/\X$. Note that the conflation $ A\xrto{\left(\begin{smallmatrix} f\\
		\alpha \end{smallmatrix}\right)} B\oplus A'\xrightarrow{(\beta, -f')} B' $ in $\A$ is also $\X$-complete, so it induces a sequence in $\ul{\E}_\X$: $A\xrto{\left(\begin{smallmatrix} \ul{f}\\
		\ul{\alpha} \end{smallmatrix}\right)} B\oplus A'\xrightarrow{(\ul{\beta}, -\ul{f}')} B'$  which lifts to a pushout diagram in $\A/\X$. \end{proof}
\begin{remark} \label{Frobenius} By the proof of Theorem \ref{thm:sexact}, we know that if $\A$ has a Frobenius exact structure $\mathcal{E}'\subset \mathcal{E}$ such that each object in $\X$ is projective-injective with respect to $\mathcal{E}'$, then $\A/\X$ also has a Frobenius exact structure induced by $\mathcal{E}'$.
\end{remark}

\begin{example}
	Let $k$ be a field and $p\geq 2$ a natural number. Let $\mathbb{X}$ be the weighted projective line of type $(2, 3, p)$. Let ${\rm vec}$-$\mathbb{X}$ be the category of vector bundles and $\F$ the additive closure of the {\it fading} line bundles in the sense of \cite{Kussin/Lenzing/Meltz}. Then ${\rm vec}\mbox{-}\mathbb{X}$ has an exact structure $\mathcal{E}$ and $\F$ satisfies the conditions of Theorem \ref{thm:sexact} by the proof of \cite[Proposition 4.13 ]{Kussin/Lenzing/Meltz}. Then $\mathrm{vet}\mbox{-}\mathbb{X}/\F$ is a preabelian category and has an exact structure induced by $\F$-complete conflations. In particular, since $\mathrm{vec}\mbox{-}\mathbb{X}$ has a Frobenius exact structure given by the {\it distinguished} conflations in $\mathcal{E}$ with line bundles as the projective-injective objects \cite{Kussin/Lenzing/Meltz}, then $\mathrm{vec}\mbox{-}\mathbb{X}/\F$ has a Frobenius exact structure induced by the distinguished conflations, this is \cite[Theorem A (2)]{Kussin/Lenzing/Meltz}.
\end{example}

\subsection*{Abelian subfactor categories}

The following result extends \cite[Theorem 3.3]{Koenig/Zhu}:
\begin{theorem} \label{thm:sabelian}
	Let $(\T, [1], \Delta)$ be a triangulated category and $\X$ an additive subcategory of $\T$. Consider the following conditions:

	$(\mathrm{a})$ \ For each $A\in \T$, there are triangles $A[-1]\to X_1\to X_0\xrto{p} A$ and $A\xrto{i} X^0\to X^1\to A[1]$ in $\Delta$ such that $p$ is an $\X$-precover, $i$ is an $\X$-preenvelope and $X^1, X_1\in \X$.
	
	$(\mathrm{b})$ \ For any triangle $A\xrto{f} B\xrto{g} C\to A[1]$ in $\Delta$, $f$ is an $\X$-monic if $\ul{g}$ is an epimorphism in $\T/\X$ and $g$ is an $\X$-epic if $\ul{f}$ is a monomorphism in $\T/\X$.
	
	\noindent	If condition $(\mathrm{a})$ holds, then $\T/\X$ is a preabelian category.  Furthermore, if condition $(\mathrm{b})$ holds also, then $\T/\X$ is an abelian category.
\end{theorem}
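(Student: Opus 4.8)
The plan is to put two compatible one-sided triangulated structures on $\T/\X$, one right and one left, whose suspension, resp.\ loop, functor is zero, to deduce preabelianity from these, and then to use (b) to promote ``preabelian'' to ``abelian''. First I would set up the structures. Since $\C=\T$ is trivially extension-closed, hypothesis (a) is precisely what is required by Lemma \ref{lem:specialclosed}, so $\T$ is both special $\X$-monic closed and special $\X$-epic closed. By Propositions \ref{prop:prtc} and \ref{prop:pltc} this makes $(\T,[1],\R(\T),\X)$ a partial right triangulated category and $(\T,[-1],\rmL(\T),\X)$ a partial left triangulated category, and Theorem \ref{thm:main} then endows $\T/\X$ with a right triangulated structure $(\Sigma^\X,\Delta^\X)$ and a left triangulated structure $(\Omega_\X,\nabla_\X)$. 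The key preliminary observation is that $\Sigma^\X$ and $\Omega_\X$ are zero (i.e.\ $\Sigma^\X(A)\cong 0\cong\Omega_\X(A)$ in $\T/\X$ for all $A$): for $A\in\T$, the fixed right $\C$-triangle $A\to X^A\to U^A\to A[1]$ and the triangle $A\to X^0\to X^1\to A[1]$ from (a) both have an $\X$-preenvelope as first map, so Lemma \ref{lemma:uniqueness}(ii) gives $\Sigma^\X(A)=U^A\cong X^1$ in $\T/\X$; as $X^1\in\X$ this is zero, and dually $\Omega_\X(A)\cong 0$ using $X_1\in\X$.

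Next I would prove preabelianity, which uses only (a). Given $\ul f\colon A\to B$, axiom (RT1) in $(\T/\X,\Sigma^\X,\Delta^\X)$ yields a distinguished right triangle $A\xrto{\ul f}B\xrto{\ul g}C\xrto{\ul h}\Sigma^\X(A)$, with $\Sigma^\X(A)\cong 0$. In a right triangulated category $\ul g$ is a weak cokernel of $\ul f$ and $\ul h$ is a weak cokernel of $\ul g$ (see the remark before Proposition \ref{prop:prtc}); since $\Hom_{\T/\X}(\Sigma^\X(A),-)=0$, the second fact forces $\ul g$ to be an epimorphism, whence $\ul g=\Coker\ul f$. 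So cokernels exist in $\T/\X$, and dually (via $(\Omega_\X,\nabla_\X)$ with $\Omega_\X\cong 0$) kernels exist; hence $\T/\X$ is preabelian, which is the first assertion.

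Finally, assuming (b), I would show every monomorphism is a kernel and every epimorphism a cokernel. For a monomorphism $\ul f\colon A\to B$, lift it to $f$ and complete to a triangle $A\xrto{f}B\xrto{g}C\xrto{h}A[1]$ in $\T$; by (b), $g$ is $\X$-epic, so the standard left triangle attached to this triangle (read as a left $\C$-sequence with $g$ in the $\X$-epic slot) is $0\to A\xrto{\ul f}B\xrto{\ul g}C$, in which $\ul f$ is a weak kernel of $\ul g$ and hence, being monic, $\Ker\ul g$. A short diagram chase then identifies $\ul f$ with $\iota:=\Ker(\Coker\ul f)$: from $\ul g\,\ul f=0$ (already in $\T$) one writes $\ul g=\ul p\circ(\Coker\ul f)$, so $\ul g\,\iota=0$, giving $\iota=\ul f\,\ul r$ through $\Ker\ul g=\ul f$ and $\ul f=\iota\,\ul s$ through $\iota$; left-cancelling the monomorphisms $\ul f$ in $\ul f=\ul f\,\ul r\,\ul s$ and $\iota$ in $\iota=\iota\,\ul s\,\ul r$ shows $\ul r,\ul s$ are mutually inverse, so $\ul f$ and $\iota$ define the same subobject. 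Dually, for an epimorphism $\ul g\colon B\to C$, completing a lift to a triangle $A\xrto{f}B\xrto{g}C\to A[1]$ gives, via (b), that $f$ is $\X$-monic, so $A\xrto{\ul f}B\xrto{\ul g}C\to 0$ is a distinguished right triangle; then $\ul g$ is a weak cokernel of $\ul f$ and, being epic, $\Coker\ul f$, and the dual chase with $\Coker(\Ker\ul g)$ gives $\ul g=\Coker(\Ker\ul g)$. Since $\T/\X$ is preabelian with every monomorphism normal and every epimorphism conormal, it is abelian.

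The steps I expect to require the most care are the vanishing of $\Sigma^\X$ and $\Omega_\X$ — this is exactly what turns the weak (co)kernels coming from the triangulated structures into genuine (co)kernels, and it is where the hypotheses $X^1,X_1\in\X$ are used — and, in the abelian part, choosing in each case the right triangle to complete so that the relevant half of (b) applies, together with the cancellation argument identifying $\ul f$ with $\Ker(\Coker\ul f)$ and its dual. The remaining triangulated bookkeeping (rotation, the long exact sequences, the weak (co)kernel properties) is routine.
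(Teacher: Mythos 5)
Your proposal is correct and follows essentially the same route as the paper: use (a) with Lemma \ref{lem:specialclosed} and Propositions \ref{prop:prtc}, \ref{prop:pltc} to get the two partial one-sided triangulated structures, apply Theorem \ref{thm:main}, observe that $\Sigma^\X$ and $\Omega_\X$ vanish (so weak (co)kernels become genuine ones, giving preabelianity), and then under (b) rotate a lifted triangle so that every monomorphism becomes a kernel and every epimorphism a cokernel. The extra details you supply (the use of Lemma \ref{lemma:uniqueness}(ii) for the vanishing and the $\Ker(\Coker)$ cancellation chase) are exactly the steps the paper leaves implicit.
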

\begin{proof} \ Assume that the condition (a) holds. Then  by Proposition \ref{prop:prtc}, $(\T, [1], \Delta, \X)$ is a partial right triangulated category, and by Proposition \ref{prop:pltc} $(\T, [-1], \Delta, \X)$ is a partial left triangulated category. Thus $\T/\X$ has a right triangulated structure $(\Sigma^\X, \Delta^\X)$ and a left triangulated structure $(\Omega_\X, \nabla_\X)$ by Theorem \ref{thm:main}. By the constructions of  $\Sigma^\X$ and $\Omega_\X$,  we know that they are zero functors. Therefore, any morphism in $\T/\X$ has a kernel and a cokernel, i.e. $\T/\X$ is a preabelian category.
	
	Now suppose that condition (b) holds also. If $\ul{f}\colon A\to B$ is a monomorphism in $\T/\X$, then there is a right triangle $A\xrto{\ul{f}} B\xrto{\ul{g}} C\to 0$ in $\Delta^\X$. Without loss of generality, we may assume that this triangle is a standard right triangle, i.e. it is induced by a triangle $A\xrto{f} B\xrto{g} C\stackrel{h}\to A[1]$ in $\Delta$. By assumption, $g$ is an $\X$-epic. Thus by the rotation axiom, $ C[-1]\to A\xrto{f} B\xrto{g} C$ is in $\Delta$ and it induces a standard left triangle $0\to A \xrto{\ul{f}} B\xrto{\ul{g}} C$ in $\nabla_\X$. Thus $\ul{f}$ is a kernel of $\ul{g}$. Dually we can show that any epimorphism is a cokernel. So $\T/\X$ is an abelian category.
\end{proof}
\begin{example}
	If $\X$ is a {\it tilting subcategory} of $\T$ in the sense of \cite[Definition 3.1]{Koenig/Zhu}, then $\X$ satisfies the conditions (a) and (b) of Theorem \ref{thm:sabelian} by \cite[Lemma 3.2 and Theorem 2.3]{Koenig/Zhu}. Thus $\T/\X$ is an abelian category which is \cite[Theorem 3.3]{Koenig/Zhu}.
\end{example}

\section{Triangulated subfactor categories}
In this section we introduce the notion of a partial triangulated category, construct triangulated subfactor categories and give a model structure of Iyama-Yoshino triangulated subfactor categories.

\subsection*{Triangulated structures}
Let $(\A, \Sigma, \R(\C), \X)$ be a partial right triangulated category. By Theorem \ref{thm:main}, the subfactor category $\C/\X$ has a right triangulated structure $(\Sigma^\X, \Delta^\X)$. It is natural to ask when this is a triangulated structure on $\C/\X$. This question naturally suggests the following definition.

\begin{definition} \label{defn:Frobeniusprtc} \ The partial right triangulated category $(\A, \Sigma, \R(\C), \X)$ is called {\it Frobenius} if

(i)  For each $A\in \C$, there is a right $\C$-triangle $K\xrto{u} X\xrto{v} A\to \Sigma(K)$ with $u$ an $\X$-preenvelope.

(ii) For each $A\in \C$, $p^A$ is an $\X$-precover in the fixed right $\C$-triangle $A\xrto{i^A} X^A\xrto{p^A} U^A\xrto{q^A} \Sigma(A)$.

(iii)  For any commutative diagram of the fixed right $\C$-triangles
\[\xy\xymatrixcolsep{2pc}\xymatrix@C14pt@R14pt{
A\ar[r]^-{i^A}\ar@{.>}[d]_{\alpha}&X^A\ar[r]^-{p^A}\ar[d]^\beta &U^A\ar[r]^-{q^A}\ar[d]^\gamma& \Sigma(A)\ar@{.>}[d]^{\Sigma(\alpha)}\\
B\ar[r]^{i^B}&X^B\ar[r]^-{p^B}&U^B\ar[r]^-{q^B}&\Sigma(B)}
\endxy\]
there is a morphism $\alpha\colon A\to B$ such that $\beta\circ i^A=i^B\circ \alpha$ and $\Sigma(\alpha)\circ q^A= q^B\circ \gamma$, and if $\gamma$ factors through $p^B$ then $\alpha$ factors through $i^A$.
\end{definition}

Dually, we can define the notion of a {\it Frobenius} partial left triangulated category.

\begin{proposition} \label{prop:Frobeniusprtc} $(\mathrm{i})$ \ If the partial right triangulated category $(\A, \Sigma, \R(\C), \X)$ is Frobenius, then $\C/\X$ is a triangulated category.

$(\mathrm{ii})$ \ If the partial left triangulated category $(\A, \Omega, \rmL(\C), \X)$ is Frobenius, then $\C/\X$ is a triangulated category.
\end{proposition}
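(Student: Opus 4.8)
The plan is to upgrade the right triangulated structure $(\C/\X,\Sigma^\X,\Delta^\X)$ produced by Theorem \ref{thm:main}(i) to a triangulated one, and the only additional ingredient needed is invertibility of the suspension: a right triangulated category whose suspension functor is an autoequivalence is triangulated in the sense of \cite{Verdier} (this is the remark recorded after Definition \ref{def:rtricat}). Hence the proof of (i) reduces to showing that $\Sigma^\X\colon\C/\X\to\C/\X$ is an equivalence, i.e.\ fully faithful and essentially surjective; (ii) then follows by dualising, applied to the left triangulated structure $(\C/\X,\Omega_\X,\nabla_\X)$ of Theorem \ref{thm:main}(ii).

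First I would check essential surjectivity. For $A\in\C$, condition (i) of Definition \ref{defn:Frobeniusprtc} gives a right $\C$-triangle $K\xrto{u}X\xrto{v}A\to\Sigma(K)$ in $\R(\C)$ with $u$ an $\X$-preenvelope and $K\in\C$; since this and the fixed triangle $K\xrto{i^K}X^K\xrto{p^K}U^K\xrto{q^K}\Sigma(K)$ both start at $K$ with an $\X$-preenvelope as first map, Lemma \ref{lemma:uniqueness}(ii) gives $\Sigma^\X(K)=U^K\cong A$ in $\C/\X$. For fullness, given $\ul\psi\colon\Sigma^\X(A)=U^A\to U^B=\Sigma^\X(B)$, condition (ii) says $p^B$ is an $\X$-precover, so as $X^A\in\X$ the composite $\psi\circ p^A$ factors as $p^B\circ\beta$ for some $\beta\colon X^A\to X^B$; condition (iii) then yields $f\colon A\to B$ with $(f,\beta,\psi,\Sigma(f))$ a morphism of the fixed right $\C$-triangles of $A$ and $B$, and the uniqueness clause of Lemma \ref{lemma:notation} forces $\ul\psi=\ul\kappa^f=\Sigma^\X(\ul f)$. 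For faithfulness, if $\Sigma^\X(\ul f)=\ul\kappa^f=0$ then $\kappa^f$ factors through an object of $\X$, hence (again using that $p^B$ is an $\X$-precover) through $p^B$; the factoring clause of condition (iii), applied to the morphism of fixed triangles $(f,\sigma^f,\kappa^f,\Sigma(f))$ of (\ref{kappaf}), then shows $f$ factors through $i^A$, i.e.\ $\ul f=0$. Thus $\Sigma^\X$ is a fully faithful, essentially surjective additive endofunctor of $\C/\X$, hence an autoequivalence, and (i) is done.

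I expect the delicate part to be the coordinated use of conditions (ii) and (iii). Condition (ii) is exactly what turns the structure maps $p^{(-)}$ into $\X$-precovers, so that morphisms out of objects of $\X$ into the $U^{(-)}$ lift along them; condition (iii) is the precise ``left-hand'' counterpart of Lemma \ref{lemma:notation}, and it is its uniqueness-modulo-$i^A$ content that drives faithfulness, so some care is needed to see that the lift $\alpha$ produced there may be taken to be $f$ (equivalently, that any such lift factors through $i^A$ once $\gamma$ does). A small bookkeeping point is that the syzygy $K$ in condition (i) lies in $\C$, so that $\Sigma^\X(K)$ makes sense; this is built into the hypotheses. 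Statement (ii) is obtained by running the whole argument in the opposite category: the loop functor $\Omega_\X$ of the induced left triangulated structure on $\C/\X$ is an autoequivalence, whence $\C/\X$ is triangulated.
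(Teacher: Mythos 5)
Your proposal is correct and follows essentially the same route as the paper: reduce to showing $\Sigma^\X$ is an autoequivalence of the right triangulated category $(\C/\X,\Sigma^\X,\Delta^\X)$, get density from condition (i) of Definition \ref{defn:Frobeniusprtc} together with Lemma \ref{lemma:uniqueness}(ii), and get full faithfulness from conditions (ii) and (iii) combined with Lemma \ref{lemma:notation}, with (ii) proved dually. Your explicit handling of the faithfulness step (applying the factoring clause of (iii) with $\alpha=f$ from (\ref{kappaf})) and your use of ``precover'' where the paper's proof misprints ``preenvelope'' are, if anything, slightly more careful than the paper's own wording.
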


\begin{proof} \ We only prove (i), the statement (ii) can be proved dually.

Since $\C/\X$ has a right triangulated structure $(\Sigma^\X, \Delta^\X)$ as constructed in Theorem \ref{thm:main} induced by $\R(\C)$, so we only need to show that $\Sigma^\X$ is an equivalence which is equivalent to prove that $\Sigma^\X$ is dense and fully faithful by \cite[Theorem II.2.7]{Gelfand-Manin}.

We first show that $\Sigma^\X$ is dense. In fact, given any $A\in \C/\X$, by Definition \ref{defn:Frobeniusprtc} (i), there is a right $\C$-triangle $K\xrto{u} X\xrto{v} A\to \Sigma(K)$ such that $u$ is an $\X$-preevelope. By the construction of $\Sigma^\X$ and Lemma \ref{lemma:uniqueness} (ii), $A\cong \Sigma^\X(K)$ in $\C/\X$, so $\Sigma^\X$ is dense.

To see that $\Sigma^\X$ is fully faithful, let $\ul{\gamma}\colon \Sigma^\X(A)\to \Sigma^\X(B)$ be a morphism in $\C/\X$. By the construction of $\Sigma^\X$, $\Sigma^\X(A)=U^A$ and $\Sigma^\X(B)=U^B$. Then there exists a commutative diagram of right $\C$-triangles
	\[\xy\xymatrixcolsep{2pc}\xymatrix@C36pt@R14pt{
A\ar[r]^-{i^A}\ar[d]_{\alpha}&X^A\ar[d]^{\beta}\ar[r]^-{p^A}&U^A\ar[d]^\gamma\ar[r]^-{q^A}&\Sigma(A)\ar[d]^{\Sigma(\alpha)}\\
B\ar[r]^-{i^B}&X^B\ar[r]^-{p^B}&U^B\ar[r]^-{q^B}& \Sigma(B)}
\endxy\]
where the existence of $\beta$ is since $p^B$ is an $\X$-preenvelope and the existence of $\alpha$ is by Definition \ref{defn:Frobeniusprtc}(iii). Thus $\ul{\gamma}=\Sigma^\X(\ul{\alpha})$. Moreover, if $\ul{\gamma}=0$, i.e., $\gamma$ factors through some object in $\X$, then it factors through $p^B$ since $p^B$ is an $\X$-preenvelope. Thus $\alpha$ factors through $i^A$ by Definition \ref{defn:Frobeniusprtc} (iii). So $\ul{\alpha}=0$ in $\C/\X$ and then $\Sigma^\X$ is fully faithful.
\end{proof}

\subsection*{Partial triangulated categories}
\begin{definition} \ \label{defn:ptc}\ Let $(\A, \Omega, \rmL(\C),\X)$ be a partial left triangulated category and $(\A, \Sigma, \R(\C), \X)$ a partial right triangulated category. The six-tuple $(\A, \Omega, \Sigma, \rmL(\C), \R(\C), \X)$ is called a {\it partial triangulated category} if

(a) \ $(\Omega, \Sigma)$ is an adjoint pair with $\psi$ as the adjunction isomorphism.

(b) \ If $\Omega(C)\xrto{u} A\xrto{v} B\xrto{w} C$ is in $\rmL(\C)$ with $B,C\in\C$, then $A\xrto{v} B\xrto{w} C\xrto{-\psi_{C, A}(u)} \Sigma(A)$ is in $\R(\C)$. Conversely, if $A\xrto{f} B\xrto{g} C\xrto{h} \Sigma(A)$ is in $\R(\C)$ with $A,B\in \C$, then $\Omega(C)\xrto{-\psi_{C,A}^{-1}(h)} A\xrto{f} B\xrto{g} C$ is in $\rmL(\C)$.

(c) \ For each $A\in \C$, $p^A$ is an $\X$-precover in the fixed right $\C$-triangle $A\xrto{i^A} X^A\xrto{p^A} U^A\xrto{q^A} \Sigma(A)$ and $\iota_A$ is an $\X$-preenvelope in the fixed left $\C$-triangle $\Omega(A)\xrto{\nu_A} K_A\xrto{\iota_A} X_A\xrto{\pi_A} A$.
\end{definition}

\begin{example} \label{exam:ptc} (i) \  Let $\F$ be a Frobenius category. Let $\I$ be the subcategory of projective-injective objects of $\F$. Then $(\F, 0,0, \rmL(\F), \R(\F), \I)$ is a partial triangulated category, where $\rmL(\F)$ and $\R(\F)$ are as constructed in Proposition \ref{prop:pltcexact}.

(ii) \  Let $(\mathcal{T}, [1])$ be a triangulated category with $\X\subseteq \mathcal{C}$ two additive subcategories closed under direct summands. Assume that $(\C, \C)$ forms an $\X$-mutation pair, then $(\T, [-1], [1], \rmL(\C), \R(\C), \X)$ is a partial triangulated category in which $\rmL(\C)$ and $\R(\C)$ are as constructed in Example \ref{exam:tricat} (i).

(iii) \ Let $(\T, [1])$ be a triangulated category. Let $\C$ and $\X$ be two additive categories such that $\C$ is $\X$-Frobenius in the sense of \cite[Definition 3.2]{Beligiannis13}. Then $(\T, [-1], [1], \rmL(\C), \R(\C), \X)$ is a partial triangulated category (see the proof of \cite[Proposition 3.1]{Beligiannis13}) in which $\rmL(\C)$ and $\R(\C)$ are as constructed in Example \ref{exam:tricat} (ii).

(iv) \ Let $k$ be an algebraically closed field and let $(\T, [1], \Delta)$ be a $k$-linear triangulated category with Serr functor $S$, which has finite dimensional Hom spaces and split idempotents. Let $\X$ be an additive subcategory of $\T$ such that for each object $A$ in $\T$ there are triangles $A\xrto{i} X\to C\to A[1]$ and $A[-1]\to K\to X'\xrto{\pi} A$ in $\Delta$ with $i$ an $\X$-preenvelope and $\pi$ an $\X$-precover. Denote by $\tau=[-1]\circ S$ the Auslander-Reiten translation. If $\X$ satisfies $\tau(\X)=\X$ (i.e. the set of objects isomorphic to objects in $\tau(\X)$ is equal to $\X$), then $(\T, [-1], [1], \Delta, \Delta, \X)$ is a partial triangulated category by \cite[Lemma 3.2]{Jorgensen}.
\end{example}

\begin{proposition} \label{prop:ptc} \ Let $(\A, \Omega, \Sigma, \rmL(\C), \R(\C), \X)$ be a partial triangulated category. Then both the partial right triangulated category $(\A, \Sigma, \R(\C), \X)$ and the partial left triangulated category $(\A, \Omega, \rmL(\C), \X)$ are Frobenius.
\end{proposition}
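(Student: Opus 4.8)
By duality it suffices to show that the partial right triangulated category $(\A,\Sigma,\R(\C),\X)$ is Frobenius, i.e.\ to verify (i)--(iii) of Definition \ref{defn:Frobeniusprtc}; the partial left triangulated statement will then follow by the dual argument, reading axioms (a)--(c) of Definition \ref{defn:ptc} the other way round. Condition (ii) is literally part of axiom (c), so nothing is needed there. For (i), given $A\in\C$ I would take the fixed left $\C$-triangle $\Omega(A)\xrto{\nu_A}K_A\xrto{\iota_A}X_A\xrto{\pi_A}A$ of the partial left triangulated structure; since $X_A\in\X\subseteq\C$ and $A\in\C$, axiom (b) turns it into the right $\C$-triangle $K_A\xrto{\iota_A}X_A\xrto{\pi_A}A\xrto{-\psi_{A,K_A}(\nu_A)}\Sigma(K_A)$ lying in $\R(\C)$, and $\iota_A$ is an $\X$-preenvelope by (c). This is exactly the right $\C$-triangle required in (i), with $K=K_A$ and $u=\iota_A$.

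The substantial part is (iii). The first step would be to transport the two fixed right $\C$-triangles to the left-hand side: the converse implication of axiom (b), applicable because $A,X^A,B,X^B\in\C$, produces $\rmL(\C)$-sequences
\[ \Omega(U^A)\xrto{u^A}A\xrto{i^A}X^A\xrto{p^A}U^A,\qquad \Omega(U^B)\xrto{u^B}B\xrto{i^B}X^B\xrto{p^B}U^B, \]
with $u^A=-\psi_{U^A,A}^{-1}(q^A)$ and $u^B=-\psi_{U^B,B}^{-1}(q^B)$. The only hypothesis in (iii), $\gamma\circ p^A=p^B\circ\beta$, is precisely the commutativity of the rightmost square of a prospective morphism $(\Omega(\gamma),\,\alpha,\beta,\gamma)$ of these left sequences; hence the axiom of the partial left triangulated category dual to (PRT3) yields $\alpha\colon A\to B$ making the whole diagram of left sequences commute, so that $i^B\circ\alpha=\beta\circ i^A$ and $\alpha\circ u^A=u^B\circ\Omega(\gamma)$. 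Transporting the second identity back through the adjunction isomorphism $\psi$, using its naturality in both variables, rewrites it as $\Sigma(\alpha)\circ q^A=q^B\circ\gamma$; thus $\alpha$ fulfils the two equalities demanded in (iii).

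For the remaining clause, suppose $\gamma=p^B\circ\rho$. Then $p^B\circ(\beta-\rho\circ p^A)=\gamma\circ p^A-p^B\circ\rho\circ p^A=0$, and since $i^B$ is a weak kernel of $p^B$ in the $\rmL(\C)$-sequence above, $\beta-\rho\circ p^A=i^B\circ\mu$ for some $\mu\colon X^A\to B$. I would then take $\alpha':=\mu\circ i^A$ as the required morphism: it factors through $i^A$ by construction; from $p^A\circ i^A=0$ one gets $i^B\circ\alpha'=(\beta-\rho\circ p^A)\circ i^A=\beta\circ i^A$; and from $i^A\circ u^A=0$ together with $q^B\circ p^B=0$, after applying $\psi^{-1}$, one gets $\Sigma(\alpha')\circ q^A=0=q^B\circ\gamma$. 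Hence $\alpha'$ again satisfies the two equalities, now with the required factorization, and (iii) holds. Combined with Proposition \ref{prop:Frobeniusprtc}, this will give the triangulated structure on $\C/\X$ asserted in Theorem \ref{thm:ptc}.

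The step I expect to be the real obstacle is organising (iii) correctly: one has to notice that the fixed right $\C$-triangles must first be pushed over to $\rmL(\C)$ via axiom (b), both to make the weak-kernel property of $i^B$ available and to be able to invoke the dual of (PRT3), and that in the last clause one must select the specific representative $\alpha'=\mu\circ i^A$ rather than an arbitrary $\alpha$ satisfying the two equalities --- for such an arbitrary $\alpha$ one only obtains $\alpha\circ u^A=0$, which by itself need not force factorization through $i^A$. Everything else reduces to routine manipulation of weak (co)kernels and the adjunction $\psi$.
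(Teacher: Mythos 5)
Your treatment of (i), (ii) and the existence half of (iii) is exactly the paper's: convert the fixed left $\C$-triangle with axiom (b) and quote axiom (c) for (i); read (ii) off axiom (c); transport the two fixed right $\C$-triangles to $\rmL(\C)$ by the converse half of (b), get $\alpha$ from the dual of (PRT3), and translate $\alpha\circ u^A=u^B\circ\Omega(\gamma)$ into $\Sigma(\alpha)\circ q^A=q^B\circ\gamma$ via the naturality of $\psi$ (the paper writes out the two naturality squares that you only gesture at, but that is routine).

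Where you diverge is the last clause of Definition \ref{defn:Frobeniusprtc}(iii), and there your closing justification is wrong. The paper simply applies the dual of (PRT2) to the transported diagram of left $\C$-triangles: since $X^A\in\X$ and the whole diagram commutes, if $\gamma$ factors through $p^B$ then the $\alpha$ already in hand factors through $i^A$. Your construction of the special $\alpha'=\mu\circ i^A$ via the weak-kernel property of $i^B$ is correct as far as it goes (both equalities do hold for it, by $p^A\circ i^A=0$, $q^B\circ p^B=0$, $i^A\circ u^A=0$ and injectivity of $\psi^{-1}$), so you do verify an existential reading of (iii). But your final remark --- that an arbitrary $\alpha$ satisfying the two equalities need not factor through $i^A$ --- is false in this setting: any such $\alpha$ makes the entire left diagram commute (the $\Sigma$-equation translates through $\psi$ into the $\Omega$-square), so the dual of (PRT2) forces the factorization; alternatively, $\alpha$ is determined modulo $\X$ by $\gamma$ (dual of Lemma \ref{lemma:notation}, using that $p^B$ is an $\X$-precover and $X^A\in\X$), and anything factoring through $\X$ factors through the $\X$-preenvelope $i^A$. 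This stronger form is the one the paper actually uses (e.g.\ in the faithfulness step of Proposition \ref{prop:Frobeniusprtc}), so you should replace, or at least supplement, your ad hoc $\alpha'$ by the one-line appeal to the dual of (PRT2); as written, your proof only delivers the weaker "some $\alpha$" version of (iii) and rests its self-assessment on a claim that the axioms refute.
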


\begin{proof} \ We only prove that $(\A, \Sigma, \R(\C), \X)$ is Frobenius since the other case can be proved dually. We verify Definition \ref{defn:Frobeniusprtc} (i)-(iii) one by one.

(i) \ For each object $A$ in $\C$, there is a fixed left $\C$-triangle $\Omega(A)\xrto{\nu_A} K_A\xrto{\iota_A} X_A\xrto{\pi_A} A$ with $\pi_A$ an $\X$-precover. Since $(\A, \Omega, \Sigma, \rmL(\C), \R(\C), \X)$ is a partial triangulated category, $K_A\xrto{\iota_A} X_A\xrto{\pi_A} A\xrto{-\psi_{A, K_A}(\nu_A)} \Sigma(K_A)$ is a right $\C$-triangle with $\iota_A$ an $\X$-preenvelope.

(ii) For each $A\in \C$, $p^A$ is an $\X$-precover in the fixed right $\C$-triangle $A\xrto{i^A} X^A\xrto{p^A} U^A\xrto{q^A} \Sigma(A)$ since $\Omega(U^A)\xrto{-\psi_{U^A, A}^{-1}(q^A)}A\xrto{i^A} X^A\xrto{p^A} U^A$ is in $\rmL(\C)$ with $p^A$ an $\X$-precover by assumption.

(iii) Consider the following commutative diagram of the fixed right $\C$-triangles
\[\xy\xymatrixcolsep{2pc}\xymatrix@C14pt@R14pt{
A\ar[r]^-{i^A}&X^A\ar[r]^-{p^A}\ar[d]^\beta &U^A\ar[r]^-{q^A}\ar[d]^\gamma& \Sigma(A)\\
B\ar[r]^{i^B}&X^B\ar[r]^-{p^B}&U^B\ar[r]^-{q^B}&\Sigma(B)}
\endxy\]
By assumption, we have a commutative diagram of left $\C$-triangles:
	\[\xy\xymatrixcolsep{2pc}\xymatrix@C36pt@R14pt{
\Omega(U^A)\ar[r]^-{-\psi_{U^A,A}^{-1}(q^A)}\ar[d]_{\Omega(\gamma)}&A\ar[r]^-{i^A}\ar[d]^\alpha&X^A\ar[d]^{\beta}\ar[r]^-{p^A}&U^A\ar[d]^\gamma\\
\Omega(U^B)\ar[r]^-{-\psi_{U^B,B}^{-1}(q^B)}&B\ar[r]^-{i^B}&X^B\ar[r]^-{p^B}&U^B}
\endxy\]
where the existence of $\alpha$ is by the axiom (PLT3) of a partial left triangulated category. While by the axiom (PLT2), if $\gamma$ factors through $q^B$, then $\alpha$ factors through $i^A$. We claim that $q^B\circ \gamma=\Sigma(\alpha)\circ q^A$. In fact, follow from the naturality of $\psi_{U^B, B}^{-1}$ in $U^B$, we have the following commutative diagram
\[\xy\xymatrixcolsep{2pc}\xymatrix@C18pt@R14pt{
\Hom_\A(U^B, \Sigma(B))\ar[r]^-{\psi_{U^B,B}^{-1}}\ar[d]_{\gamma^*}&\Hom_\A(\Omega(U^B), B)\ar[d]^{\Omega(\gamma)^*}\\
\Hom_\A(U^A, \Sigma(B))\ar[r]^-{\psi_{U^A,B}^{-1}}&\Hom_\A(\Omega(U^A),B)}
\endxy\]
and then for $q^B\colon U^B\to \Sigma(B)$ we get $$\psi^{-1}_{U^B, B}(q^B)\circ \Omega(\gamma)=\psi^{-1}_{U^A, B}(q^B\circ \gamma).$$
Similarly, by the naturality of $\psi^{-1}_{U^A, A}$ in $A$, we obtain the following commutative diagram
 \[\xy\xymatrixcolsep{2pc}\xymatrix@C18pt@R14pt{
\Hom_\A(U^A, \Sigma(A))\ar[r]^-{\psi_{U^A,A}^{-1}}\ar[d]_{\Sigma(\alpha)_*}&\Hom_\A(\Omega(U^A), A)\ar[d]^{\alpha_*}\\
\Hom_\A(U^A, \Sigma(B))\ar[r]^-{\psi_{U^A,B}^{-1}}&\Hom_\A(\Omega(U^A),B)}
\endxy\]
This yields for $q^A\colon U^A\to \Sigma(A)$ the formula $$\psi^{-1}_{U^A, B}(\Sigma(\alpha)\circ q^A)=\alpha\circ \psi^{-1}_{U^A, A}(q^A).$$
Since $\psi^{-1}_{U^B, B}(q^B)\circ \Omega(\gamma)=\alpha\circ \psi^{-1}_{U^A, A}(q^A)$ we know that $\psi^{-1}_{U^A, B}(\Sigma(\alpha)\circ q^A)=\psi^{-1}_{U^A, B}(q^B\circ \gamma)$ and thus $q^B\circ \gamma=\Sigma(\alpha)\circ q^A.$
So we have the following commutative diagram:
\[\xy\xymatrixcolsep{2pc}\xymatrix@C16pt@R14pt{
A\ar[r]^-{i^A}\ar[d]_{\alpha}&X^A\ar[r]^-{p^A}\ar[d]^{\beta}&U^A\ar[r]^-{q^A} \ar[d]^\gamma&\Sigma(A)\ar[d]^{\Sigma(\alpha)}\\
B\ar[r]^-{i^B}&X^B\ar[r]^-{p^B}&U^B\ar[r]^-{q^B}&\Sigma(B)}
\endxy\]
\end{proof}

\begin{remark} In general, even a Frobenius partial right triangulated category $(\A, \Sigma, \R(\C), \X)$ and a Frobenius partial left triangulated category $(\A, \Omega, \rmL(\C), \X)$ share the same underlying categories, the triangulated structures $(\Sigma^\X, \Delta^\X)$ induced by $\R(\C)$ and $(\Omega_\X, \nabla_\X)$ induced by $\rmL(\C)$ on $\C/\X$ are not necessarily the same. For example, let $\T$ be an additive category and assume that it has two different triangulated structures
$([1], \Delta)$ and $([1], \Delta')$ with the same shift functor. Let $\R(\T)=\Delta$ and $\rmL(\T)=\Delta'$. Then $(\T, [1], \R(\T), 0)$ is a Frobenius partial right triangulated category and $(\T, [-1], \rmL(\T), 0)$ is a Frobenius partial triangulated category. However, the triangulated structure on $\T=\T/0$ induced by $\R(\T)$ is $([1], \Delta)$ and the triangulated structure on $\T=\T/0$ induced by $\rmL(\T)$ is $([-1], \Delta')$, and they are different.
\end{remark}
We have the following result for a partial triangulated category.
\begin{theorem} \label{thm:ptc}  Let $(\A, \Omega, \Sigma, \rmL(\C), \R(\C), \X)$ be a partial triangulated category. Then the subfactor $\C/\X$ is a triangulated category and the two triangulated structures induced by $\rmL(\C)$ and $\R(\C)$ respectively are the same.
\end{theorem}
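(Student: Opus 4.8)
The plan is as follows. By Proposition~\ref{prop:ptc}, both $(\A,\Sigma,\R(\C),\X)$ and $(\A,\Omega,\rmL(\C),\X)$ are Frobenius, so Proposition~\ref{prop:Frobeniusprtc} makes $\Sigma^\X$ and $\Omega_\X$ equivalences of $\C/\X$; consequently the right triangulated structure $(\C/\X,\Sigma^\X,\Delta^\X)$ of Theorem~\ref{thm:main}(i) is already a triangulated structure (a right triangulated category with invertible suspension is triangulated), and likewise the left triangulated structure $(\C/\X,\Omega_\X,\nabla_\X)$ of Theorem~\ref{thm:main}(ii) upgrades to a triangulated structure with shift $\Omega_\X^{-1}$, whose distinguished triangles $\nabla_\X'$ are the rotations of the left triangles in $\nabla_\X$. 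This already proves that $\C/\X$ is a triangulated category, and it remains to identify $(\Omega_\X^{-1},\nabla_\X')$ with $(\Sigma^\X,\Delta^\X)$.

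First I would show that $\Sigma^\X$ and $\Omega_\X$ are mutually quasi-inverse. Applying Definition~\ref{defn:ptc}(b) to the fixed right $\C$-triangle $A\xrto{i^A}X^A\xrto{p^A}U^A\xrto{q^A}\Sigma(A)$ gives a left $\C$-triangle $\Omega(U^A)\xrto{-\psi^{-1}_{U^A,A}(q^A)}A\xrto{i^A}X^A\xrto{p^A}U^A$ in $\rmL(\C)$ whose map $p^A$ is an $\X$-precover by Definition~\ref{defn:ptc}(c); comparing it with the fixed left $\C$-triangle of $U^A$ and invoking the dual of Lemma~\ref{lemma:uniqueness}(ii) yields $\Omega_\X\Sigma^\X(A)=\Omega_\X(U^A)\cong A$ in $\C/\X$, and dually $\Sigma^\X\Omega_\X(A)\cong A$. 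Since $\Sigma^\X$ is an equivalence, $\Omega_\X$ is a quasi-inverse of it; transporting $\nabla_\X'$ along the resulting natural isomorphism $\Omega_\X^{-1}\cong\Sigma^\X$, both $\Delta^\X$ and $\nabla_\X'$ become classes of right $\Sigma^\X$-triangles and it suffices to prove $\Delta^\X=\nabla_\X'$.

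For the inclusion $\Delta^\X\subseteq\nabla_\X'$ I would argue as follows. By Lemma~\ref{lem:dis-induce}, every member of $\Delta^\X$ is isomorphic to a standard right triangle $A\xrto{\ul f}B\xrto{-\ul\eta}N\xrto{\ul\zeta}\Sigma^\X(A)$ arising from the right $\C$-triangle $A\xrto{\left(\begin{smallmatrix}i^A\\f\end{smallmatrix}\right)}X^A\oplus B\xrto{(\theta,-\eta)}N\xrto{\lambda}\Sigma(A)$ of diagram~(\ref{pullback}). The key point is that $(\theta,-\eta)$ is an $\X$-epic: by Definition~\ref{defn:ptc}(b) the right $\C$-triangle $B\xrto{\eta}N\xrto{\zeta}U^A\to\Sigma(B)$ (the second column from the right of~(\ref{pullback})) is also a left $\C$-triangle, hence $\eta$ is a weak kernel of $\zeta$; together with the relations $\zeta\theta=p^A$ and $\zeta\eta=0$ read off from~(\ref{pullback}) and the fact that $p^A$ is an $\X$-epic, a short diagram chase shows that every morphism $X\to N$ with $X\in\X$ factors through $(\theta,-\eta)$. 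Thus Definition~\ref{defn:ptc}(b) applied to the triangle above produces a left $\C$-triangle $\Omega(N)\xrto{-\psi^{-1}_{N,A}(\lambda)}A\xrto{\left(\begin{smallmatrix}i^A\\f\end{smallmatrix}\right)}X^A\oplus B\xrto{(\theta,-\eta)}N$ whose last map is an $\X$-epic, so it induces a standard left triangle; reducing modulo $\X$ (where $X^A\cong 0$) this reads $\Omega_\X(N)\xrto{\ul\kappa}A\xrto{\ul f}B\xrto{-\ul\eta}N$, and rotating it inside the triangulated category $(\C/\X,\Omega_\X^{-1},\nabla_\X')$ yields an element of $\nabla_\X'$ with initial segment $A\xrto{\ul f}B\xrto{-\ul\eta}N$. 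The last thing to check is that its connecting map equals $\ul\zeta$; since $\ul\zeta$ is built from $\lambda$ via the comparison diagram~(\ref{xif}) and $\ul\kappa$ from $-\psi^{-1}_{N,A}(\lambda)$ via the dual comparison, this should follow from the naturality of $\psi$ (and of the isomorphisms supplied by Lemma~\ref{lemma:uniqueness}(ii)). This exhibits the standard right triangle as an element of $\nabla_\X'$, giving $\Delta^\X\subseteq\nabla_\X'$, and the symmetric argument with the roles of $\R(\C)$ and $\rmL(\C)$ interchanged gives the reverse inclusion, hence $\Delta^\X=\nabla_\X'$.

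The hard part will be this final sign and naturality bookkeeping: one must pin down the connecting morphisms of the two standard triangles \emph{exactly}, not merely up to a sign or up to an automorphism of $N$, which means verifying that the conventions in (PRT1)(ii), (PRT2), (PRT4) and their duals are set up so that $\psi$ intertwines the construction of $\ul\zeta$ out of $\lambda$ with that of $\ul\kappa$ out of $-\psi^{-1}_{N,A}(\lambda)$. Every other ingredient — that $\Sigma^\X$ and $\Omega_\X$ are equivalences, that $(\theta,-\eta)$ is an $\X$-epic, and that all the comparison morphisms used exist — is a direct application of the Frobenius hypotheses of Definition~\ref{defn:ptc} together with Lemmas~\ref{lemma:uniqueness}, \ref{lemma:notation} and \ref{lem:dis-induce}.
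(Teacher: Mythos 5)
Your overall plan has the same shape as the paper's proof (Proposition \ref{prop:ptc} plus Proposition \ref{prop:Frobeniusprtc} to get that both induced one-sided structures are triangulated, then identify them), and your observation that $(\theta,-\eta)$ is an $\X$-epic is correct and is a fine way to see that the middle row of (\ref{pullback}) also lives on the left side. However, the two steps that actually constitute the content of the theorem are not established. First, you deduce that $\Omega_\X$ is quasi-inverse to $\Sigma^\X$ from the \emph{object-wise} isomorphisms $\Omega_\X\Sigma^\X(A)\cong A$ and $\Sigma^\X\Omega_\X(A)\cong A$ supplied by Lemma \ref{lemma:uniqueness}(ii) and its dual; this inference is invalid, since object-wise isomorphism of $\Omega_\X\Sigma^\X$ with the identity does not produce a \emph{natural} isomorphism, and everything that follows (``transporting $\nabla_\X'$ along the resulting natural isomorphism $\Omega_\X^{-1}\cong\Sigma^\X$'') presupposes one. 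The paper closes exactly this point by constructing an explicit adjunction isomorphism $\varphi_{A,B}\colon\Hom_{\C/\X}(\Sigma^\X(A),B)\to\Hom_{\C/\X}(A,\Omega_\X(B))$, lifting $\psi$ through the comparison diagrams of Lemma \ref{lemma:notation} and its dual and using the computation in the proof of Proposition \ref{prop:ptc}; adjointness of the two equivalences is what legitimizes identifying the shifts.

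Second, the identification of the connecting morphism of your rotated left triangle with $\ul{\zeta}$ is precisely the equality $\Delta^\X=\nabla_\X$ that has to be proved, and you explicitly defer it (``the hard part \dots should follow from the naturality of $\psi$''). As it stands this is a gap, not bookkeeping to be postponed: without pinning the connecting map down exactly, you only know the triangle $A\xrto{\ul f}B\xrto{-\ul\eta}N$ can be completed in the left structure, not that it is completed by $\ul{\zeta}$. The paper avoids this chase altogether: it completes the given morphism $\ul{\zeta}\colon N\to\Sigma^\X(A)$ to a triangle $\Omega_\X(\Sigma^\X(A))\xrto{\ul u}K\xrto{\ul v}N\xrto{\ul{\zeta}}\Sigma^\X(A)$ in the left-induced structure, uses Definition \ref{defn:ptc}(b) on the column $B\xrto{\eta}N\xrto{\zeta}U^A$ of (\ref{pullback}) to produce a comparison morphism $\ul\alpha\colon B\to K$ with $\ul v\circ\ul\alpha=\ul\eta$ and $\ul u\circ[-\varphi_{A,A}(\ul 1_{\Sigma^\X(A)})]=\ul\alpha\circ\ul f$, and then concludes that $\ul\alpha$ is an isomorphism because $\varphi_{A,A}(\ul 1_{\Sigma^\X(A)})$ is and both structures are triangulated; hence the standard right triangle already lies in $\nabla_\X$, with the reverse inclusion dual. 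So to make your proposal complete you would either have to carry out the naturality/sign verification you postpone, or switch to the paper's comparison-triangle argument, which is designed so that the connecting map never has to be recomputed.
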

\begin{proof} By Proposition \ref{prop:Frobeniusprtc} and Proposition \ref{prop:ptc}, we know that the subcactor $\C/\X$ has a triangulated structure $(\Sigma^\X, \Delta^\X)$ induced by $\R(\C)$ and a triangulated structure $\rmL(\C)$.  Now we show that the triangulated structures $(\Sigma^\X, \Delta^\X)$ and $(\Omega_\X, \nabla_\X)$ are the same by showing that $(\Sigma^\X, \Omega_\X)$ is an adjoint pair and $\Delta^\X=\nabla_\X$.

We first show that $(\Sigma^\X, \Omega_\X)$ is an adjoint pair. For $A, B\in \C$, we define a map $$\varphi_{A,B}\colon \Hom_{\C/\X}(\Sigma^\X(A), B)\to \Hom_{\C/\X}(A, \Omega_\X(B))$$ by sending $\ul{f}$ to $-\ul{g}$, where $g$ fits into the following commutative diagram of left $\C$-triangles
\[\xy\xymatrixcolsep{2pc}\xymatrix@C36pt@R14pt{
\Omega(U^A)\ar[r]^-{-\psi_{U^A,A}^{-1}(q^A)}\ar[d]_{\Omega(f)}&A\ar[r]^-{i^A}\ar[d]^{g}&X^A\ar[r]^-{p^A}\ar[d]^{\sigma}& U^A\ar[d]^f\\
\Omega(B)\ar[r]^-{\nu_B}&U_B\ar[r]^-{\iota_B}& X_B\ar[r]^{\pi_B}&B}
\endxy\]
obtained by the dual of Lemma \ref{lemma:notation}. By the proof of Proposition \ref{prop:ptc}, we have $-\Sigma(g)\circ q^A=\psi_{B, U_B}(\nu_B)\circ f$ since $-g\circ \psi_{U^A, A}^{-1}(q^A)=\nu_B\circ \Omega(f)$. So we have the following commutative diagram of right $\C$-triangles
\[\xy\xymatrixcolsep{2pc}\xymatrix@C38pt@R14pt{
A\ar[r]^-{i^A}\ar[d]_{g}&X^A\ar[r]^-{p^A}\ar[d]^{\sigma}& U^A\ar[d]^f \ar[r]^-{q^A}&\Sigma(A)\ar[d]^{\Sigma(g)}\\
U_B\ar[r]^-{\iota_B}& X_B\ar[r]^{\pi_B}&B\ar[r]^-{-\psi_{B,U_B}(\nu_A)}&\Sigma(U_B)}
\endxy\]
Therefore $\ul{f}$ and $\ul{g}$ are determined mutually and then $\varphi_{A,B}$ is a bijection by Lemma \ref{lemma:notation} and its dual. The naturality of $\varphi_{A,B}$ on $A$ and $B$ can be verified directly. So $(\Sigma^\X, \Omega_\X)$ is an adjoint pair.

Assume that we have a standard triangle $A\xrto{\ul{f}}B\xrto{-\ul{\eta}}N\xrto{\ul{\zeta}}\Sigma^\X(A)$ in $\Delta^\X$ which is induced in (\ref{pullback}) by the right $\C$-sequence $A\xrto{\left(\begin{smallmatrix}
i^A \\
f
\end{smallmatrix}\right)}X^A\oplus B\xrto{(\theta, -\eta)}N\to \Sigma(A)$. We will show that it is in $\nabla_\X$.  Since $(\Omega_\X, \nabla_\X)$ is also a triangulated structure on $\C/\X$, there is a triangle $\Omega_\X(\Sigma^\X(A))\xrto{\ul{u}}K\xrto{\ul{v}}N\xrto{\ul{\zeta}} \Sigma^\X(A)$ in $\nabla_\X$ and we may assume that it is induced by the following commutative diagram of left $\C$-triangles:
\[\xy\xymatrixcolsep{2pc}\xymatrix@C32pt@R18pt{
\Omega(\Sigma^\X(A))\ar[r]^-{\nu_{\Sigma^\X(A)}}\ar@{=}[d]&\Omega_\X(\Sigma^\X(A))\ar[r]^-{\iota_{\Sigma^\X(A)}} \ar[d]^u&X_{\Sigma^\X(A)}\ar[r]^-{\pi_{\Sigma^\X(A)}}\ar[d]^{\left(\begin{smallmatrix}
1 \\
0
\end{smallmatrix}\right)}& \Sigma^\X(A)\ar@{=}[d]\\
\Omega(\Sigma^\X(A))\ar[r]^-w&K\ar[r]^-{\left(\begin{smallmatrix}
\vartheta \\
v
\end{smallmatrix}\right)}&X_{\Sigma^\X(A)}\oplus N\ar[r]^-{(\pi_{\Sigma^\X(A)}, \zeta)}& \Sigma^\X(A)}
\endxy\]
By the diagram (\ref{pullback}), we know that $B\xrto{\eta} N\xrto{\zeta} \Sigma^\X(A)\xrto{\Sigma(f)\circ q^{A}}\Sigma(B)$ is in $\R(\C)$ and thus $\Omega(\Sigma^\X(A))\xrto{-\psi_{\Sigma^\X(A),B}^{-1}(\Sigma(f)\circ q^A)}B\xrto{\eta}N\xrto{\zeta}\Sigma^\X(A)$ is in $\rmL(\C)$.  So there is a morphism $\alpha\colon B\to K$ such that the following diagram is commutative
\[\xy\xymatrixcolsep{2pc}\xymatrix@C58pt@R18pt{
\Omega(\Sigma^\X(A))\ar@{=}[d]\ar[r]^-{-\psi_{\Sigma^\X(A),B}^{-1}(\Sigma(f)\circ q^A)} &B\ar[r]^-{\eta} \ar[d]_{\alpha}&N\ar[r]^-{\zeta}\ar[d]^{\left(\begin{smallmatrix}
0 \\
1
\end{smallmatrix}\right)}& \Sigma^\X(A)\ar@{=}[d]\\
\Omega(\Sigma^\X(A))\ar[r]^-{w}&K\ar[r]^-{\left(\begin{smallmatrix}
\vartheta \\
v
\end{smallmatrix}\right)}&X_{\Sigma^\X(A)}\oplus N\ar[r]^-{(\pi_{\Sigma^\X(A)}, \zeta)}& \Sigma^\X(A)}
\endxy\]
and then $\ul{v}\circ \ul{\alpha}=\ul{\eta}$. By the dual of Lemma \ref{lemma:notation}, we can deduce that $\ul{u}\circ [-\varphi_{A,A}(\ul{1}_{\Sigma^\X(A)})]=\ul{\alpha}\circ \ul{f}$. Thus we have the following commutative diagram
\[
\xy\xymatrixcolsep{2pc}\xymatrix@C16pt@R16pt{
A\ar[r]^-{\ul{f}}\ar[d]_{\varphi_{A,A}(\ul{1}_{\Sigma^\X(A)})}& B\ar[r]^-{-\ul{\eta}}\ar[d]_{-\ul{\alpha}}&N\ar[r]^-{\ul{\zeta}}\ar@{=}[d]& \Sigma^\X(A)\ar@{=}[d]\\
\Omega_\X(\Sigma^\X(A))\ar[r]^-{\ul{u}}& K\ar[r]^-{\ul{v}}&N\ar[r]^-{\ul{\zeta}}& \Sigma^\X(A)}
\endxy
\]
Since $\varphi_{A,A}(\ul{1}_{\Sigma^\X(A)})$ is an isomorphism and both $(\Sigma^\X, \Delta^\X)$ and $(\Omega_\X, \nabla_\X)$ are triangulated structures, we know that $\ul{\alpha}$ is an isomorphism. Therefore, $A\xrto{\ul{f}}B\xrto{-\ul{\eta}}N\xrto{\ul{\zeta}}\Sigma^\X(A)$ is in $\nabla_\X$ and then $\Delta^\X\subseteq \nabla_\X$. Dually we can show that $\nabla_\X\subseteq \Delta^\X$ and then they are equal.
\end{proof}

\subsection*{A model structure of Iyama-Yoshino triangulated subfactor categories}

Let $\C$ be an additive category. We recall the definition of a model structure on $\C$ in the sense of Quillen \cite[Page 1.1, Definition 1]{Quillen67} (see also \cite[Definition 4.1]{Beligiannis01}) which is called a {\it classical model structure} here. The reason is that the modern definition of a model structure often corresponds to what Quillen called a {\it closed model structure} (i.e. a classical model structure which satisfies the {\it retraction} axiom).
\begin{definition} A {\it classical model structure} on $\C$ consists of three classes of morphisms called {\it cofibrations, fibrations} and {\it weak equivalences}, denoted by $\mathrm{Cof}, \mathrm{Fib}$ and $\mathrm{W}$ respectively, which contain isomorphisms and satisfy the following axioms:

(M1) For any commutative diagram
\[\xy\xymatrixcolsep{2pc}\xymatrix@C14pt@R14pt{
A\ar[r]^f\ar[d]_i&B\ar[d]^p\\
C\ar[r]^g\ar@{.>}[ru]^h&D}
\endxy\]
where $i$ is a cofibration, $p$ is a fibration, and where either $i$ or $p$ is a weak equivalence, then the dotted morphism $h$ exists such that $h\circ i=f$ and $p\circ h=g$.

(M2) Any morphism $f$ in $\C$  can be factored in two ways: (i)\ $f=p\circ i$, where $i$ is a cofibration and  $p$ is a weak equivalence and fibration, and (ii)\ $f=p\circ i$, where $i$ is a weak equivalence and cofibration and $p$ is a fibration.

(M3) (Two out of three property) \ If $f, g$ are composable morphisms in $\C$ and if two of the morphisms $f, g$ and $g\circ f$ are weak equivalences, so is the third.
\end{definition}

If $(\A, \Omega, \Sigma, \rmL(\C), \R(\C), \X)$ is a partial triangulated category, then we define three classes of morphisms in $\C$ as follows:

(i) $\mathrm{Cof}(\C)=\{f\colon A\to B \ | \ \exists \  A\xrto{f} B\to C\to \Sigma(A)\in \R(\C)\}$;

(ii) $\mathrm{Fib}(\C)=\{f\colon A\to B \ | \ \exists \ \Omega(A)\to K\to A\xrto{f} B \in \rmL(\C)\}$;

(iii) $\mathrm{W}(\C)=\{f\colon A\to B \ | \ \ul{f} \ \mbox{is an isomorphism in}  \ \C/\X \}$.

Recall that an additive category $\C$ is said to be {\it weakly idempotent complete} if every split monomorphism has a cokernel in $\C$. We have

\begin{proposition}  \label{prop:model} Let $(\A, \Sigma, \rmL(\C), \R(\C), \X)$ be a partial triangulated category. If $\C$ is weakly idempotent complete and $\X$ is closed under direct summands, then

$(\mathrm {i})$\ $(\mathrm{Cof}(\C), \mathrm{Fib}(\C), \mathrm{W}(\C))$ is a model structure on $\C$.

 $(\mathrm{ii})$\ The homotopy category of $(\mathrm{Cof}(\C), \mathrm{Fib}(\C), \mathrm{W}(\C))$ is $\C/\X$ which is a triangulated category.
\end{proposition}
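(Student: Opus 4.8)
The plan is to verify the classical model-structure axioms (M1)--(M3) for $(\mathrm{Cof}(\C),\mathrm{Fib}(\C),\mathrm{W}(\C))$ and then deduce (ii) from Theorem~\ref{thm:ptc}. First I would record the free facts. Applying (PRT1)(ii) to the morphism $0\to A$ (and using that $\R(\C)$ is closed under isomorphisms of right $\Sigma$-sequences) shows $0\to A\xrto{1_A}A\to 0$ lies in $\R(\C)$, so $0\to A$ is a cofibration; dually $A\to 0$ is a fibration, hence every object of $\C$ is cofibrant and fibrant. Isomorphisms lie in all three classes because $\R(\C)$, $\rmL(\C)$ and the isomorphisms of $\C/\X$ are closed under isomorphisms, and (M3) is immediate since $\C\to\C/\X$ is additive and $\mathrm{W}(\C)$ is the preimage of the isomorphisms. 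Using the fixed right $\C$-triangle $A\xrto{i^A}X^A\xrto{p^A}U^A\xrto{q^A}\Sigma(A)$ together with the factorization of the codiagonal $A\oplus A\to A$ from (M2)(i) below, I would build a cylinder object for $A$ whose two structure maps, after composition with any morphism to $B$, differ precisely by a factorization through $i^{A\oplus A}$; since $i^{A\oplus A}$ is an $\X$-preenvelope this identifies left homotopy with stable equivalence. As all objects are cofibrant and fibrant, this yields $\Ho(\C)\cong\C/\X$ as additive categories, which is triangulated by Theorem~\ref{thm:ptc}; that is (ii).

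Next I would characterize the trivial cofibrations and trivial fibrations. For a cofibration $f\colon A\to B$ with right $\C$-triangle $A\xrto{f}B\xrto{g}C\xrto{h}\Sigma(A)$ in $\R(\C)$, I claim $A\xrto{\ul f}B\xrto{\ul g}C\to\Sigma^\X(A)$ is a distinguished triangle of $\C/\X$: by (PRT1)(iii) and Lemma~\ref{lem:dis-induce} the morphism $\ul f$ sits in the standard triangle $A\xrto{\ul f}B\xrto{-\ul\eta}N\xrto{\ul\zeta}\Sigma^\X(A)$ arising from $A\xrto{\binom{i^A}{f}}X^A\oplus B\to N\to\Sigma(A)$, there is a morphism of right $\C$-sequences from the latter to $A\xrto{f}B\xrto{g}C\to\Sigma(A)$ which is the identity on $A$ and $(0,1)$ on $X^A\oplus B$ (a stable isomorphism on the first two terms), and the triangulated five lemma in $\C/\X$ then gives $N\cong C$ in $\C/\X$. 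Since $\C/\X$ is triangulated, $\ul f$ is an isomorphism exactly when $C\cong 0$ in $\C/\X$, which, as $\X$ is closed under direct summands, holds exactly when $C\in\X$. So the trivial cofibrations are the first maps of right $\C$-triangles $A\to B\to X\to\Sigma(A)$ with $X\in\X$, and dually the trivial fibrations are the last maps of left $\C$-triangles $\Omega(\cdot)\to X'\to B\xrto{p}D$ with $X'\in\X$.

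For (M2) I would use explicit factorizations modelled on the mapping cylinder and cocylinder. Given $f\colon A\to B$, factor it as $A\xrto{\binom{i^A}{f}}X^A\oplus B\xrto{(0,1)}B$: the first map is a cofibration by (PRT1)(iii), and the second is a split epimorphism whose canonical left $\C$-triangle is the split sequence $\Omega(B)\xrto{0}X^A\to X^A\oplus B\xrto{(0,1)}B$ in $\rmL(\C)$ (by the dual of (PRT1)(ii)), with middle term $X^A\in\X$, hence a trivial fibration. Dually, choosing an $\X$-precover $\pi_B\colon X_B\to B$ from the partial left triangulated structure, factor $f$ as $A\xrto{\binom{1}{0}}A\oplus X_B\xrto{(f,\pi_B)}B$: the first map is a split monomorphism with cokernel $X_B\in\X$, hence a trivial cofibration by the previous paragraph, and the second is a fibration by the dual of (PRT1)(iii). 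It is here that $\C$ being weakly idempotent complete and $\X$ being closed under direct summands are used, to guarantee that these split sequences are genuine $\Sigma$-/$\Omega$-sequences with all terms in $\C$ and that the cones/kernels lie in $\X$.

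The remaining and genuinely hard axiom is the lifting axiom (M1), and I expect the main obstacle to be producing lifts that commute strictly rather than only up to stable equivalence. By duality it suffices to lift a cofibration $i\colon A\to C$ against a trivial fibration $p\colon B\to D$ (and, dually, a trivial cofibration against a fibration). The structural lemma I would establish from the partial triangulated axioms --- using Definition~\ref{defn:ptc}(b) to pass between right and left $\C$-triangles, Definition~\ref{defn:ptc}(c) that $p^A$ is an $\X$-precover and $\iota_A$ an $\X$-preenvelope, and (PRT2) --- is that every cofibration is an $\X$-monic and every fibration is an $\X$-epic. Granting this, in a lifting square with $i$ a cofibration and $p$ a trivial fibration (so the kernel term of $p$ lies in $\X$): $\ul p$ is an isomorphism in the triangulated category $\C/\X$, hence a lift exists there; lifting a representative $h_0\colon C\to B$, the morphisms $h_0 i-u$ and $ph_0-v$ factor through $\X$, and using that $i$ is an $\X$-monic one extends the relevant components along $i$ to a correction $c\colon C\to B$ factoring through $\X$ with $ci=h_0 i-u$ and $pc=ph_0-v$, so that $h:=h_0-c$ is an honest lift. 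The crux, then, is the proof that cofibrations are $\X$-monic (equivalently, that the objects of $\X$ are injective relative to cofibrations) and the verification that the correction with the two prescribed compositions exists; the remaining bookkeeping is routine, as sketched above, and with (M1)--(M3) in hand part (ii) follows from the identification $\Ho(\C)\cong\C/\X$ and Theorem~\ref{thm:ptc}.
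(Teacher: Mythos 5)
Your proposal takes a different route from the paper (which simply observes that the proof of Beligiannis' Theorem~4.5 in \emph{Homotopy theory of modules and Gorenstein rings} carries over verbatim and then invokes Theorem~\ref{thm:ptc} for the triangulation of the homotopy category), and at its decisive point it is not a proof but a statement of intent. Everything up to (M2) is plausible: (M3), the characterization of trivial cofibrations and trivial fibrations via the triangulated structure of $\C/\X$ together with weak idempotent completeness and $\X$ being closed under summands, and the two factorizations $A\xrto{\binom{i^A}{f}}X^A\oplus B\xrto{(0,1)}B$ and $A\to A\oplus X_B\xrto{(f,\pi_B)}B$ are all in order. But the lifting axiom (M1), which is the heart of the proposition, is reduced to a ``structural lemma'' that you announce rather than prove: that every cofibration is an $\X$-monic and every fibration is an $\X$-epic. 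This is not a formal consequence of Definition~\ref{defn:ptc}. The axioms give weak (co)kernel conditions inside a right or left $\C$-triangle, (PRT2)/(PLT2) give factorization statements of a different shape, and Definition~\ref{defn:ptc}(c) only concerns the \emph{fixed} triangles $A\xrto{i^A}X^A\to U^A\to\Sigma(A)$ and $\Omega(A)\to K_A\to X_A\to A$; nothing in the axioms yields exactness of $\Hom_\A(-,\X)$ against the connecting morphism of an arbitrary sequence in $\R(\C)$, which is what $\X$-monicity of its first map amounts to. In the paper's examples this property comes from extra input --- injectivity/projectivity of $\X$ in the Frobenius case, or the vanishing conditions $\Hom_\T(\C,\X[1])=0=\Hom_\T(\X[-1],\C)$ for an $\X$-mutation pair --- so a general derivation, if it exists, genuinely has to be written down, and your sketch gives no indication of how it would go.

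The second unproved step is the correction argument itself: having a lift $\ul{h}_0$ in $\C/\X$, you need a single morphism $c\colon C\to B$ factoring through $\X$ satisfying \emph{both} $c\circ i=h_0\circ i-u$ and $p\circ c=p\circ h_0-v$ simultaneously; even granting that $i$ is $\X$-monic and that the kernel term of the trivial fibration $p$ lies in $\X$, arranging both identities at once (rather than each separately) is exactly where the work lies in the classical Frobenius argument, and it is waved through as ``routine bookkeeping.'' As it stands, then, the proposal establishes (M2), (M3) and part (ii) modulo the cylinder computation, but not (M1); either supply the $\X$-monic/$\X$-epic lemma and the two-sided correction in the stated generality, or follow the paper and import Beligiannis' lifting argument, whose hypotheses should then be checked against Definition~\ref{defn:ptc} explicitly.
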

\begin{proof} The proof of \cite[Theorem 4.5]{Beligiannis01} also works here, and by Theorem \ref{thm:ptc} we know that $\C/\X$ is a triangulated category. \end{proof}

\begin{corollary} \ \label{cor:IYmodel} Let $\T$ be a triangulated category. Let $\X\subseteq \C$ be two additive subcategories of $\T$ closed under direct summands. If $(\mathcal{C}, \mathcal{C})$ forms an $\mathcal{X}$-mutation pair, then there is a model structure on $\C$ such that the Iyama-Yoshino triangulated subfactor category $\C/\X$ is the corresponding homotopy category.
\end{corollary}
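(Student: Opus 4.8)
The strategy is to deduce the corollary from Proposition \ref{prop:model}, after recognizing the Iyama--Yoshino setup as an instance of a partial triangulated category, and then to match the resulting triangulated structure with the one of \cite[Theorem 4.2]{Iyama-Yoshino}.

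First I would invoke Example \ref{exam:ptc}(ii): since $\X\subseteq\C$ are additive subcategories of $\T$ closed under direct summands and $(\C,\C)$ is an $\X$-mutation pair, the tuple $(\T,[-1],[1],\rmL(\C),\R(\C),\X)$ is a partial triangulated category, where $\R(\C)$ and $\rmL(\C)$ are the classes constructed in Example \ref{exam:tricat}(i) (the special $\X$-monic and $\X$-epic closedness of $\C$ needed there being supplied by Lemma \ref{lem:specialclosed}). Next I would check the remaining hypotheses of Proposition \ref{prop:model}: $\X$ is closed under direct summands by assumption, and $\C$ is weakly idempotent complete because it is closed under direct summands in $\T$ --- a split monomorphism in $\T$ has a cokernel, since the triangle built on it splits, and that cokernel is a direct summand of an object of $\C$, hence lies in $\C$. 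Therefore Proposition \ref{prop:model} applies and produces a model structure $(\mathrm{Cof}(\C),\mathrm{Fib}(\C),\mathrm{W}(\C))$ on $\C$ whose homotopy category is $\C/\X$, equipped with the triangulated structure $(\Sigma^\X,\Delta^\X)$ of Theorem \ref{thm:ptc}.

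It then remains to identify $(\C/\X,\Sigma^\X,\Delta^\X)$ with the Iyama--Yoshino triangulated subfactor category. For $A\in\C$, axiom (c) of an $\X$-mutation pair gives a triangle $A\xrto{i^A} X^A\xrto{p^A} U^A\xrto{q^A} A[1]$ in $\T$ with $X^A\in\X$ and $U^A\in\C$, which may be taken as the fixed right $\C$-triangle defining $\Sigma^\X$; the Iyama--Yoshino suspension is by definition $\ul A\mapsto\ul{U^A}$ on objects, and its action on morphisms is governed by the same commutative square (\ref{kappaf}) used to define $\Sigma^\X$, so the two functors agree by the uniqueness in Lemma \ref{lemma:notation}. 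On triangles, a standard right triangle in $\Delta^\X$ arises, via the diagram (\ref{xif}), from a triangle $A\xrto{f} B\xrto{g} C\xrto{h} A[1]$ of $\T$ with $f$ an $\X$-monic, with connecting map $\ul\xi(f,g)$ read off from (\ref{xif}); unwinding the constructions in \cite[Section 4]{Iyama-Yoshino} shows this to be precisely their class of distinguished triangles, the comparison being pinned down by Lemma \ref{lemma:uniqueness}. Hence the homotopy category of this model structure is the Iyama--Yoshino triangulated subfactor category, as claimed.

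The step I expect to be the main obstacle is the final identification: one must check not merely that the underlying categories and the suspension functor coincide, but that the class $\Delta^\X$ of distinguished triangles and the connecting morphisms $\ul\xi(f,g)$ agree, up to isomorphism of triangles, with those of Iyama--Yoshino. This forces one to unwind both constructions in detail and to lean repeatedly on the uniqueness statements of Lemma \ref{lemma:uniqueness} and Lemma \ref{lemma:notation}; everything else is a direct application of results already established above.
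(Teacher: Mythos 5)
Your proposal is correct and follows essentially the same route as the paper: the paper's proof simply observes that $\C$ is weakly idempotent complete because it is closed under direct summands in the triangulated category $\T$, and then applies Proposition \ref{prop:model} (with the partial triangulated structure supplied by Example \ref{exam:ptc}(ii)). The final identification of $(\Sigma^\X,\Delta^\X)$ with the Iyama--Yoshino triangulated structure, which you spell out via Lemmas \ref{lemma:uniqueness} and \ref{lemma:notation}, is left implicit in the paper, so your extra care there is a welcome but not divergent addition.
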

\begin{proof} \  Since $\T$ is a triangulated category and $\C$ is closed under direct summands, we know that $\C$ is weakly idempotent complete. Thus the assertion follows from Proposition \ref{prop:model}. \end{proof}

\vskip10pt

\end{document}